\setlist[enumerate]{leftmargin=.5in}
\setlist[itemize]{leftmargin=.5in}
\def\videbox{\mathbin{\vbox{\hrule\hbox{\vrule height1.4ex \kern.6em\vrule height1.4ex}\hrule}}}
\def\demend{\hfill $\videbox$\\}
\newcommand{\bigO}{\ensuremath{\mathcal O}}
\newcommand{\argminE}{\mathop{\mathrm{argmin}}}
\newcommand{\argmax}{\mathop{\mathrm{argmax}}}
\newcommand{\R}{\ensuremath{\mathbb R}}
\newcommand{\EE}{\ensuremath{\mathbb E}}
\newcommand{\TT}{\ensuremath{\mathbb T}}
\newcommand{\NN}{\ensuremath{\mathbb N}}
\newcommand{\XX}{\ensuremath{\mathcal X}}
\newcommand{\YY}{\ensuremath{\mathcal Y}}
\newcommand{\ZZ}{\ensuremath{\mathbb Z}}
\newcommand{\CC}{\ensuremath{\mathbb C}}
\newcommand{\Sc}{\ensuremath{\mathbb S}}
\newcommand{\Bb}{\ensuremath{\mathbb B}}
\newcommand{\ee}{\ensuremath{\varepsilon}}
\newcommand{\SL}{\sum_{\lambda \in \Lambda}}
\newcommand{\tht}{\theta}
\newcommand*{\supl}{\operatornamewithlimits{sup}\limits}
\newcommand{\thefont}[2]{\fontsize{#1}{#2}\fontshape{n}\selectfont}
\newcommand{\1}{\rlap{\thefont{10pt}{12pt}1}\kern.16em\rlap{\thefont{11pt}{13.2pt}1}\kern.4em}
\newtheorem{thm}{Theorem}[section]
\newtheorem{prop}{Proposition}[section]
\newtheorem{lem}{Lemma}[section]
\newtheorem{definition}{Definition}[section]
\newtheorem{hyp}{Assumption}[section]
\crefname{section}{Section}{Section}
\title{Stochastic optimal transport in Banach spaces for regularized estimation of multivariate quantiles}
\author{ Bernard Bercu \\ bernard.bercu@math.u-bordeaux.fr \and J\'{e}r\'{e}mie Bigot \\ jeremie.bigot@math.u-bordeaux.fr \and Gauthier Thurin \\ gauthier-louis.thurin@math.u-bordeaux.fr \thanks{Universit\'e de Bordeaux, Institut de Math\'ematiques de Bordeaux et CNRS (UMR 5251)} }
\date{}
\begin{document}

\maketitle

\begin{abstract}
We introduce a new stochastic algorithm for solving entropic optimal transport (EOT) between two absolutely continuous probability measures $\mu$ and $\nu$. Our work is motivated by the specific setting of Monge-Kantorovich quantiles where the source measure $\mu$ is either the uniform distribution on the unit hypercube or the spherical uniform distribution. Using the knowledge of the source measure, we propose to parametrize a Kantorovich dual potential by its Fourier coefficients. In this way, each iteration of our stochastic algorithm reduces to two Fourier transforms that enables us to make use of the  Fast Fourier Transform (FFT) in order to implement a fast numerical method to solve EOT. We study the almost sure convergence of our stochastic algorithm that takes its values in an infinite-dimensional Banach space. Then, using numerical experiments, we illustrate the performances of our approach on the computation of regularized Monge-Kantorovich quantiles. In particular, we investigate the potential benefits of  entropic regularization for the smooth estimation of multivariate quantiles using data sampled from the target measure $\nu$.
\end{abstract}

\

\textbf{Keywords:}
Entropic Optimal Transport; Monge-Kantorovich quantiles; Multivariate quantiles; Stochastic optimization in a Banach space; Multiple Fourier Series.

\textbf{MSC codes:}
62H12, 62G20, 62L20

\section{Introduction}

Consider  a probability distribution $\nu$ supported on a subset $\YY \subset \R^d$. In the scalar case $d=1$, the quantile function of $\nu$   is nothing else than the generalized inverse  $F_{\nu}^{-1}$ of the cumulative distribution function $F_{\nu}$ of $\nu$. However, in the multi-dimensional case $d \geq 2$, there does not exist a standard notion of multivariate quantiles as there is no canonical ordering in $\R^d$. Therefore, various notions  of quantiles in dimension $d \geq 2$ have been proposed in the statistical literature, some of them being inspired by the notion of  data depth introduced in \cite{tukey75}  and other based on geometric principles \cite{Chaudhuri96}. We refer the reader to Section 1.2 in \cite{Hallin-AOS_2021} for a recent survey of the many existing concepts of multivariate quantiles.

The aim of this paper is to investigate the notion of Monge-Kantorovich (MK) quantiles using the theory of quadratic optimal transport (OT) that has been introduced in \cite{chernozhukov2015mongekantorovich}. The basic concepts of MK quantiles can be summarized as follows. 
For $\mathcal{P}_d$ the set of Lebesgue-absolutely continuous probability measures on $\R^d$, one first considers a reference distribution $\mu\in \mathcal{P}_d$, supported on a convex and compact set $\XX \subset \R^d$.
As discussed in  \cite{chernozhukov2015mongekantorovich}, this reference measure $\mu$ is typically either the uniform distribution on $\XX = [0,1]^d$ or the {\it spherical uniform}\footnote{ {\it Spherical uniform} refers to the distribution $\mu_{S}$ of a random vector $X=R\Phi$ where $R$ and $\Phi$ are independent and drawn uniformly from [0,1] and the unit hypersphere $\Sc^{d-1} = \{\varphi \in \R^d \; : \; \|\varphi \| = 1\}$, respectively.} distribution on the unit ball. 
Then, the MK quantile function of a square integrable probability measure $\nu$, with respect to $\mu$, is defined as the  optimal transport map $Q : \XX \to \YY$ between $\mu$ and $\nu$. More precisely, let $X$ be a random vector with distribution $\mu$. Then, $Q$ is the optimal mapping satisfying
\begin{equation}
\label{DEFQ}
Q =\argminE_{T \; : \; T \# \mu = \nu} \EE \Big( \frac{1}{2} \| X - T(X) \|^2 \Big),
\end{equation}
the notation $T \# \mu = \nu$ meaning that $T : \XX \to \YY$ is a push-forward map from $\mu$ to $\nu$, and $\| \cdot \|$ standing for the usual Euclidean norm in $ \R^d$. 
There, one can rely on the well-known Kantorovich duality (see e.g.\ \cite{santambrogio2015optimal,Villani})  of optimal transport to characterize $Q$.   Since $\mu$ is absolutely continuous, it is well-known \cite{Brenier91,cuesta1989} that  $Q$ can be rewritten as
\begin{equation}
Q(x) = x - \nabla u_{0}(x) = \nabla \psi_{0}(x)  \quad \mbox{with} \quad \psi_{0}(x) =x - u_0(x),
\label{DEFQgrad}
\end{equation}
for $\mu$-almost every $x \in \XX$. In the above equation, $u_{0}$ denotes the unique solution, up to a scalar translation, of the Kantorovich dual formulation of OT
\begin{equation}
u_{0} \in \argmax_{u \in L^{1}(\mu)} \int_{\XX} u(x) d\mu(x) +  \int_{\YY} u^{c}(y) d\nu(y), \label{eq:dualOT}
\end{equation}
where $u^{c} : \YY \to \R$ is the $c$-conjugate of a function $u \in L^{1}(\mu)$ in the sense that
\begin{equation*}
u^{c}(y) = \inf_{x \in \XX} \left\{ c(x,y) - u(x) \right\} \hspace{1cm} \mbox{with}  \hspace{1cm} c(x,y) = \frac{1}{2} \|x-y\|^2.
\end{equation*}
Based on a sample $(Y_{1},\ldots,Y_{n})$ from $\nu$, it is natural to estimate
$Q$ by the plug-in estimator
\begin{equation}
\label{DEFQn0}
\widehat{Q}_n =\argminE_{T \; : \; T \# \mu = \widehat{\nu}_{n}} \EE \Big( \frac{1}{2} \| X - T(X) \|^2 \Big) 
\hspace{1cm} \mbox{where} \hspace{1cm} 
\widehat{\nu}_{n} = \frac{1}{n} \sum_{j=1}^{n} \delta_{Y_{j}}.
\end{equation}
Alternatively, one has from \eqref{DEFQgrad} and \eqref{eq:dualOT} that for all $x\in \XX$, $\widehat{Q}_n(x) = x - \nabla \widehat{u}_{n}(x)$ where
\begin{equation}
\widehat{u}_{n} \in \argmax_{u \in L^{1}(\mu)} \int_{\XX} u(x) d\mu(x) +  \int_{\YY} u^c(y) d\widehat{\nu}_{n}(y). \label{eq:hatun}
\end{equation}
Finding a numerical solution to the problem \eqref{eq:hatun} involves the use of optimization techniques in the Banach space $L^{1}(\mu)$ which is a delicate issue that is tackled in the present paper. More precisely, we propose a new stochastic algorithm in order to estimate the dual potential $u_0$  using computational  optimal transport \cite{peyre2020computational} based on entropic regularization  \cite{cuturi2013sinkhorn}, which also yields a new regularized estimator of the MK quantile function $Q$.

In the last years, the benefit of this regularization has been to allow the use of OT based methods in statistics and machine learning.
In this paper, we also advocate the use of entropic OT (EOT) to obtain an estimator of the  dual potential $u_0$  that is smoother than $\widehat{u}_{n}$, leading to an estimator of the MK quantile function $Q$ that is also smoother than $\widehat{Q}_n$. More precisely, we recall that the dual formulation of EOT as formulated in  \cite{aude2016stochastic} is
\begin{equation}
\max_{u \in L^{1}(\mu)} \int_{\XX} u(x) d\mu(x) +  \int_{\YY} u^{c,\varepsilon}(y) d\nu(y) -\ee \label{eq:dualOTreg}
\end{equation}
where $\varepsilon \geq 0$ stands for a regularization parameter and $u^{c,\varepsilon}$ is the smooth conjugate of $u \in L^{1}(\mu)$ defined, for $\varepsilon > 0$, by
\begin{equation}
u^{c,\ee}(y) = -\ee \log \left( \int_{\XX} \exp \Big( \frac{u(x) -c(x,y)}{\varepsilon} \Big) d\mu(x) \right) 
 \label{eq:reg-c-transform}
\end{equation}
and $u^{c,0}(y) =u^{c}(y)$.
In what follows, a function that can be expressed as a smooth conjugate will be called a {\it regularized $c$-transform}. 
The quadratic cost function $c$ belongs to $L^1(\mu \otimes \nu)$ as soon as $\nu$ has a finite second moment.
Thus, it is known that, up to an additive constant, the solution of \eqref{eq:dualOTreg} is unique for any $\varepsilon > 0$, see e.g.\ the discussion in  \cite[Section 2]{bercu2020asymptotic}. 
The estimation of such a solution is the target of this work. To this end, we mainly focus on the setting where $\mu$ is the uniform distribution on $\XX = [0,1]^d$, and we parametrize a dual function $u \in L^{1}(\mu)$ by its decomposition in the standard Fourier basis $\phi_\lambda(x)=e^{2\pi i \langle \lambda,x \rangle}$, for $\lambda \in \ZZ^d$, that is
\begin{equation}\label{uSF}
u(x) = \sum_{\lambda \in \Lambda} \theta_\lambda \phi_\lambda(x).
\end{equation}
Based on a sample $(Y_1,\ldots,Y_n)$ from $\nu$, we estimate the Fourier coefficients $\theta=(\theta_\lambda)_{\lambda \in \Lambda} $ via a stochastic algorithm $\widehat{\theta}_{n}=(\widehat{\theta}_{n,\lambda})_{\lambda \in \Lambda} $, which allows us to propose a natural plug-in estimator
\begin{equation*}
\widehat{u}_{\varepsilon}^{\, n}(x) = \SL \widehat{\theta}_{n,\lambda} \phi_\lambda(x). 
\end{equation*}
An estimator of $Q$ is then induced from the entropic analog of \eqref{DEFQgrad} using a regularized $c$-transform of $\widehat{u}_{\varepsilon}^{\, n}$, and the notion of barycentric projection (see e.g.\  \cite[Section 3]{pooladian2021entropic}).
From a computational point of view, our stochastic algorithm, described in  \Cref{sec:Fourier_coeffs}, mainly involves the use of two Fast Fourier Transforms (FFT) and the choice of a regular grid of $p$ points in $\XX$ to estimate a set of $p$ Fourier coefficients.  The computational cost of our recursive procedure at each iteration is thus of order $\bigO\left( p \log(p)\right)$. Therefore, its numerical cost, at each iteration, is {\it independent} of the sample size $n$ for which multivariate quantiles need to be computed.

\subsection{Relation to previous works}

\subsubsection{Comparison to other algorithms for solving OT}

The estimation of $Q$ using the plug-in estimator $\widehat{Q}_{n}$ based on the empirical measure $\widehat{\nu}_{n}$ can begin 
with various computational strategies to solve OT between $\mu$ and $\widehat{\nu}_{n}$.
One can replace $\mu$ by a discrete measure $\widehat{\mu}_{n}$ on a regular grid and then solve a
\textit{discrete} OT problem between $\widehat{\mu}_{n}$ and $\widehat{\nu}_{n}$ as in \cite{chernozhukov2015mongekantorovich,Hallin-AOS_2021}. However, the computational cost of such a discrete OT problem is potentially very high 
because it scales cubically in the number of observations \cite{peyre2020computational}.  It is also proposed in \cite{ghosal2021multivariate} to compute the semi-dual problem \eqref{eq:hatun} using the Newton-type algorithms proposed in \cite{merigot18}.

In the present paper, we suggest a new strategy which relies on a parametrization of the dual function $u$ by its Fourier coefficients, which allows us to better make use of the knowledge of the reference distribution $\mu$.
Beyond the context of multivariate quantiles, the estimation of OT maps is an active area of research. 
Dual potentials were parameterized by wavelets expansions in \cite{hutter_rigollet}, and another popular approach is based on neural networks, see $e.g.$ \cite{bunne2022supervised,korotin2021wasserstein,makkuva2020optimal}. Other recent contributions on the estimation of OT maps also include  \cite{deb2021rates,Manole2021,muzellec2021near,vacher2024optimal}. 
In \cite{pooladian2021entropic}, the entropic map has been studied as a natural alternative with respect to entropic regularization, and we follow this line of work in the quantiles' context. 

Stochastic algorithms for solving the {\it semi-discrete} OT problem \eqref{eq:hatun} betwen an absolutely continuous measure $\mu$ and the empirical measure $ \widehat{\nu}_{n}$  have already been proposed in \cite{bercu2020asymptotic,BB_GN,aude2016stochastic}. 
Dual functions $v \in L^{1}( \widehat{\nu}_{n})$ can be identified to their values $v(Y_i)$ for $1\leq i \leq n$, which yields, for $\ee\geq0$, the following stochastic optimization problem
\begin{equation} 
\min_{\substack{v \in \R^n}}  \int_{\XX}  \varepsilon \log 
\Big(  \frac{1}{n} \sum_{j=1}^{n} \exp \Bigl( \dfrac{ v_j - c(x,Y_j) }{\varepsilon} \Bigr)   \Big) d\mu(x)   -  \frac{1}{n}\sum_{j=1}^{n}  v_j + \varepsilon.  \label{Semi-dualdisc0}
\vspace{-0.2cm}
\end{equation}
However, these approaches are based on a sample $(X_1,\ldots,X_m)$ from $\mu$, to solve the OT problem between {\it the absolutely continuous measure $\mu$ and the discrete measure $ \widehat{\nu}_{n}$} when $m \to + \infty$ and $n$ is held fixed. 
The originality of our approach is to make use of a sample $(Y_1,\ldots,Y_n)$ from $\nu$ to solve the regularized OT problem between {\it two absolutely continuous measures $\mu$ and $\nu$} when $n \to + \infty$. 
In this continuous setting, \cite{aude2016stochastic} also proposed a RKHS parametrization of a pair of dual potentials, which is much different from the Fourier decomposition of a dual potential in the semi-dual formulation as proposed in this paper.

\subsubsection{Comparison with existing works for MK quantiles estimation}\label{comparisonMaps}

The convergence properties of the empirical transport map \eqref{DEFQn0} to estimate the un-regularized MK quantile map \eqref{DEFQ} have been studied in \cite{chernozhukov2015mongekantorovich,ghosal2021multivariate}. Nevertheless, these estimators take their values in the sample $(Y_1,\cdots,Y_n)$, and regularizing is required to interpolate between these observations.
This was done in \cite{beirlant2019centeroutward,Hallin-AOS_2021} based on optimal couplings $(X_n,Y_n)$, inherited from discrete OT. 
The use of Moreau envelopes in \cite{Hallin-AOS_2021} preserves the cyclical monotonicity as well as the couplings $(X_n,Y_n)$. These are ideal theoretical properties, but a supplementary gradient descent is required when computing a single $Q(x)$ for $x\in\XX$.  
This is alleviated in
\cite{beirlant2019centeroutward} with an approximation of $Q$ rather than an interpolation. 
More precisely, given the unregularized solution $v$ of the problem \eqref{Semi-dualdisc0} for $\ee=0$, the authors approximate its $c$-transform $v^c$ by a LogSumExp. This yields a smooth estimator that is cyclically monotone, but based on an un-regularized dual potential $v$. 
In comparison, the use of EOT in our procedure represents a step towards more regularization, with a cyclically monotone estimator 
related to recent advances in computational OT. Note that
EOT was also recently used in the MK quantiles' framework in \cite{Carlier:2022wq,Masud2021}.

\subsection{Organization of the paper}

Our paper is organized as follows. \Cref{sec:Fourier_coeffs} details the formulation of our algorithm in the space of Fourier coefficients. 
The main results about the convergence of our stochastic algorithm are given in \Cref{sec:MainRes}. 
In \Cref{sec:useful},  we state various keystone properties of the objective functions involved in the stochastic formulation of EOT in the space of Fourier coefficients. 
 Then, in \Cref{sec:num}, we illustrate the performances of our new algorithm on simulated data. In particular,  
  the methodology to obtain a map from the \textit{spherical uniform} distribution instead of the uniform  distribution  on the unit hypercube is explained. In these numerical experiments, by letting $\varepsilon$ varying, we also study the effect of the entropic regularization on the estimation of the MK quantile function $Q$. A conclusion and a discussion on some perspectives are given in \Cref{CCL}. All the proofs are postponed to a technical Appendix.  Finally, additional proofs on the differentiability of the objective functions  are given in  supplement materials. 
  
For the sake of reproducible research, the Python codes for the experiments carried out in this paper are available at \href{https://github.com/gauthierthurin/SGD_Space_Fourier_coeffs}{https://github.com/gauthierthurin/SGD\_Space\_Fourier\_coeffs}. 


\section{A new stochastic algorithm in the space of Fourier coefficients} \label{sec:Fourier_coeffs}

\subsection{Our approach}

From now on and throughout the paper, $\mu$ is assumed to be the uniform distribution on $\XX = [0,1]^d$, except in some of the numerical experiments carried out in \Cref{sec:num} where a change of variable enables to consider the spherical uniform distribution for which $\XX = \Bb^{d}$.
Then, we consider the normalization condition for the dual potentials
\begin{equation}
\int_{\XX} u(x) d \mu(x) = 0. \label{eq:normcond}
\end{equation}
Taking the support of $\mu$ to be equal to $[0,1]^d$ is motivated by the choice to parametrize a dual function $u \in L^{1}(\mu)$, satisfying the identifiability condition \eqref{eq:normcond}, by 
its decomposition in the standard Fourier basis $\phi_\lambda(x)=e^{2\pi i \langle \lambda,x \rangle}$, for $\lambda \in \ZZ^d$, that is
\begin{equation*}
u(x) = \sum_{\lambda \in \Lambda} \theta_\lambda \phi_\lambda(x),
\end{equation*}
where $\Lambda = \ZZ^{d} \backslash \{0\}$ and $\theta=(\theta_\lambda)_{\lambda \in \Lambda} $ are the Fourier coefficients of $u$, 
\begin{equation*}
\theta_\lambda = \int_{\XX} \overline{\phi_\lambda(x)} u(x) d\mu(x).
\end{equation*}
We refer to \cite{SteinWeiss} for an introduction to multiple Fourier series on the \textit{flat torus} $\TT^d=\R^d/\ZZ^d$.
Hereafter, $\TT^d$ stands for the set of equivalence classes $[x] = \{ x + k \; ; k\in \ZZ^d\}$ for all $x\in [0,1[^d$. 
With a slight abuse of notation, we identify $\TT^d$ to its fundamental domain $[0,1[^d$, so that integration on $\TT^d$ is Lebesgue-integration on $[0,1[^d$, see \cite{SteinWeiss} or \cite{CORDEROERAUSQUIN1999199,Manole2021} in the OT literature.
Then, for a given regularization parameter $\varepsilon > 0$, we rewrite the dual problem \eqref{eq:dualOTreg} with this parametrization, to consider, for $\ell_{1}(\Lambda)$ defined hereafter, the following {\it stochastic convex minimisation} problem
\begin{equation}
\tht^\ee = \argminE_{\theta \in \ell_{1}(\Lambda)} \; H_\ee(\theta) \hspace{1cm} \mbox{with}  \hspace{1cm}  H_\ee(\theta) = \mathbb{E} \left[ h_\ee(\theta,Y) \right] \label{Se}
\end{equation}
where $Y$ is a random vector with distribution $\nu$ and
\begin{equation*}
h_\ee(\theta,y) = \ee \log \left( \int_{\XX} \exp \left( \frac{\SL \theta_\lambda \phi_\lambda(x) -c(x,y)}{\ee} \right) d\mu(x) \right) + \ee.
\end{equation*}
There, we refer to \cite[Chapter 8]{Jost2003} for a basic course on Fr\'echet differentiability and Taylor formulas for  
functions between Banach spaces.
In \Cref{sec:useful}, it is shown that, for every $y \in \YY$, the function $\theta \mapsto h_\ee(\theta,y) $ is Fr\'echet differentiable only if $\tht$ belongs to the convex set
\begin{equation*}
\ell_{1}(\Lambda)  = \left\{\theta = (\theta_\lambda)_{\lambda \in \Lambda} \in \CC^{\Lambda} \; : \; \theta_{-\lambda} = \overline{\theta_\lambda} \mbox{ and } \| \theta \|_{\ell_1} = \sum_{\lambda \in \Lambda} |\theta_\lambda| < + \infty    \right \}.
\end{equation*}
Moreover, its differential $D_\theta h_\ee(\theta, y)$ is identified as an element of the dual Banach space 
\begin{equation*}
\ell_{\infty}(\Lambda)  = \left\{v = (v_\lambda)_{\lambda \in \Lambda} \in \CC^{\Lambda} : v_{-\lambda} = \overline{v_\lambda} \mbox{ and } \| v \|_{\ell_{\infty}} = \sup_{\lambda \in \Lambda} |v_\lambda| < + \infty    \right\}.
\end{equation*}
 The components of the first order Fr\'echet derivative  $D_\theta h_\ee(\theta, y)$ are the partial derivatives   
\begin{equation}
\label{partialderh}
\frac{\partial h_\ee(\theta,y) }{\partial \theta_\lambda} = \int_\XX \overline{\phi_\lambda(x)} F_{\theta,y}(x) d\mu(x)
\end{equation}
that are the Fourier coefficients of the function
\begin{equation}\label{def_F_tht}
F_{\theta,y}(x) = \frac{\exp \left( \frac{\SL \theta_\lambda \phi_\lambda(x) -c(x,y)}{\ee} \right) }{ \int_\XX \exp \left( \frac{\SL \theta_\lambda \phi_\lambda(x) -c(x,y)}{\ee} \right) d\mu(x)}.
\end{equation}
One can observe that $F_{\theta,y}$ is a probability density function, which is a key property that we shall repeatedly use. 
In this paper, we shall analyze \eqref{Se} as a stochastic  convex minimisation problem over the Banach space $(\ell_{1}(\Lambda), \| \cdot \|_{\ell_1})$, that corresponds to the formulation of a regularized dual problem of OT in the space of Fourier coefficients.  


Imposing that the Fourier coefficients $\theta=(\theta_\lambda)_{\lambda \in \Lambda} $ form an absolutely convergent series implicitly requires that the  optimal dual potential minimizing  \eqref{eq:dualOTreg} satisfy periodic conditions at the boundary of $[0,1]^d$. 
For readability of the paper, a detailed discussion on sufficient conditions for the  un-regularized optimal dual potential $u_0$  to be periodic  is postponed to \Cref{sec:periodic} in the supplementary material.

Let $(Y_n)$ be a sequence of independent random vectors sharing the same distribution  $\nu$. In the spirit of
\cite{robbins_monro}, we propose to estimate the solution of \eqref{Se} by considering the stochastic algorithm in the Banach space $(\ell_{1}(\Lambda), \| \cdot \|_{\ell_1})$ defined, for all
$n \geq 0$, by
\begin{equation}
\widehat{\theta}_{n+1} = \widehat{\theta}_{n} - \gamma_n W D_\theta h_\ee(\widehat{\theta}_{n} , Y_{n+1})
\label{defthetan}
\end{equation}
where $\gamma_n = \gamma n^{-c}$ with $\gamma > 0$ and $1/2 < c \leq 1$, which clearly implies the standard conditions
\begin{equation}
\label{condgamma}
\sum_{n = 0}^\infty \gamma_n  = + \infty \hspace{1cm} \mbox{and} \hspace{1cm} \sum_{n = 0}^\infty \gamma_n^2  < + \infty.
\end{equation}
Moreover, $W$ is the following linear operator
\begin{equation*}
\left\{\begin{array}{ccc}
W :  (\ell_{\infty}(\Lambda), \| \cdot \|_{\ell_{\infty}})  & \to &  (\ell_{1}(\Lambda), \| \cdot \|_{\ell_1}) \\
v = (v_\lambda)_{\lambda \in \Lambda}  & \mapsto & w \odot v = (w_\lambda v_\lambda)_{\lambda \in \Lambda}
\end{array}\right.
\end{equation*}
where $w = (w_\lambda)_{\lambda \in \Lambda}$ is a {\it deterministic sequence of positive weights} satisfying the normalizing condition
\begin{equation}
\label{condw}
\|w\|_{\ell_1} = \sum_{\lambda \in \Lambda} w_\lambda < + \infty.
\end{equation}

A main difficulty arising here is that the space $\ell_{1}(\Lambda)$ of parameters differs from its dual space $\ell_{\infty}(\Lambda)$ to which the Fr\'echet derivative $D_\theta h_\ee(\theta, y)$ belongs. This is a classical issue when considering convex optimization in Banach spaces, see e.g.\ \cite{Bubeck}, and this is the reason why we introduce the linear operator $W$ in  \eqref{defthetan} that maps  $\ell_{\infty}(\Lambda)$ 
to $\ell_{1}(\Lambda)$. The use of the linear operator $W$ also induces two weighted norms on the space
\begin{equation*}
\ell_{2}(\Lambda)  = \left\{\theta = (\theta_\lambda)_{\lambda \in \Lambda} \in \CC^{\Lambda} : \theta_{-\lambda} = \overline{\theta_\lambda} \mbox{ and } \| \theta \|_{\ell_2}^{2} = \sum_{\lambda \in \Lambda} |\theta_\lambda|^2 < + \infty    \right \}.
\end{equation*}
One can observe that  we clearly have $\ell_{1}(\Lambda) \subset \ell_{2}(\Lambda)$.

\begin{definition}\label{def:normsW}
For every $\tht \in \ell^2(\Lambda)$ and for a sequence $w =  (w_\lambda)_{\lambda \in \Lambda}$ of positive weights satisying \eqref{condw}, we define the two weighted norms
\begin{equation}
\label{DEFnormW}
\Vert \tht \Vert^2_W=  \SL w_\lambda \vert  \tht_\lambda \vert^2  \hspace{1cm} \text{ and  } \hspace{1cm}
\Vert \tht \Vert^2_{W^{-1}} = \SL w_\lambda^{-1} \vert  \tht_\lambda \vert^2.
\end{equation}
\end{definition}

The aim of this paper is to establish consistency results for the stochastic algorithm given by \eqref{defthetan}.
Hereafter, 
a {\it regularized estimator} of the optimal potential 
defined, for $x \in \XX$, by
\begin{equation}
u_\ee(x) = \SL \theta_\lambda^{\ee} \phi_\lambda(x) 
\label{DEFu_e}
\end{equation}
is naturally given by 
\begin{equation}
\label{DEFun}
\widehat{u}_{\varepsilon}^{\, n}(x) = \SL \widehat{\theta}_{n,\lambda} \phi_\lambda(x).
\end{equation}

\noindent In practice, our numerical procedure starts by considering a discretization of the dual potential $u$ over a regular grid $\XX_p = \{x_1,\ldots,x_p\}$ of points in $\XX$. 
This allows us to compute the corresponding set of Fourier coefficients at frequencies $\Lambda_p$ of size $p$ by the Fast Fourier Transform (FFT). 
Then, the sequence $(\widehat{\theta}_{n,\lambda})_{\lambda \in \Lambda_p}$ satisfying \eqref{defthetan} is easily implemented using, at each iteration, the FFT and its inverse, see \Cref{alg}  below. 
Hence, the computational cost, at each iteration, of our algorithm  is of order $\bigO\left( p \log(p)\right)$, while the cost of the celebrated Sinkhorn algorithm \cite{cuturi2013sinkhorn} is $\bigO\left( pn \right)$, using a discrete source measure supported on $\XX_p$, and the one of the stochastic algorithms proposed in \cite{bercu2020asymptotic,BB_GN,aude2016stochastic} is $\bigO\left( n \right)$ at each iteration.

In our approach, the computational cost depends on the size $p$ of the  grid on $\XX_p$ 
that is fixed by the user. 
This size $p$ does not require to be particularly large, as showed by numerical experiments.
However, we stress that this appealing computational cost of $\bigO\left( p \log(p)\right)$ comes with a drawback regarding the dimension.
Indeed, the number $p$ of points in a uniform grid on $[0,1]^d$ grows exponentially with $d$. 
Thus, a standard implementation of the FFT on a uniform grid becomes difficult for medium dimensions such as $d=10$.  
Extending our work to the high-dimensional setting would require the study of more sophisticated FFTs as proposed in \cite{potts2021approximation}, but this issue is beyond the scope of this paper.

\begin{algorithm}
\caption{Stochastic algorithm \eqref{defthetan}}\label{alg}
\begin{algorithmic}
\State Initialize $N\in \mathbb{N}$, $\XX_p = \{x_1,\cdots,x_p\}$, $u \in \R^p$ and $W \in \R^{p\times p}$
\State $\tht \gets \mbox{FFT}(u)$
\While{$n \leq N$}
\State $y \gets Y_{n}$
\State $u \gets \mbox{IFFT}(\tht)$
\For{$i \in \{1,\cdots,p\}$}
\State $F[i] \gets \exp\Big((u[i] - c(x_i,y))/\ee\Big)$
\EndFor
\State $F \gets F/\mbox{mean}(F)$ \Comment{estimate of \eqref{def_F_tht}}
\State $\mbox{grad} \gets \mbox{FFT}(F)$
\State $\tht \gets \tht -\gamma_n W \cdot \mbox{grad} $
\EndWhile
\end{algorithmic}
\end{algorithm}

\subsection{The barycentric projection}\label{sec:barycproj}

Inspired by \eqref{DEFQgrad}, we could propose to estimate the MK quantile function via the {\it regularized estimator}  $\widehat{Q}_{\varepsilon}^n(x) = x -  \nabla \widehat{u}_{\varepsilon}^n(x).$ 
However, $\widehat{u}_{\varepsilon}^n$ is not necessarily a concave function, and thus $\widehat{Q}_{\varepsilon}^n$ does not correspond to the gradient of a convex function, that is the desired multivariate monotonicity for a quantile function, as argued in \cite{Hallin-AOS_2021}. To the contrary,
the \textit{entropic map} studied in \cite{pooladian2021entropic} is the gradient of a convex function as shown in \cite{chewi2022entropic}[Lemma 1]. 
Since the entropic map can be estimated from any solution of the EOT problem \eqref{eq:dualOTreg}, we propose in this paper the following estimator derived from \eqref{DEFun},
\begin{equation}
\widehat{Q}^n_{\varepsilon}(x)   = \sum_{j=1}^{n} \widehat{F}_{j}(x) Y_{j} 
\hspace{1cm} \mbox{where} \hspace{1cm}
\widehat{F}_{j}(x) = \frac{ \exp \Bigl( \dfrac{ (\widehat{u}_{\varepsilon}^n)^{c,\varepsilon}(Y_j) - c(x,Y_j) }{\varepsilon} \Bigr) }{\sum_{\ell = 1}^{n} \exp \Bigl( \dfrac{ (\widehat{u}_{\varepsilon}^n)^{c,\varepsilon}(Y_\ell)  - c(x,Y_\ell) }{\varepsilon} \Bigr)},  \label{eq:hatQ}
\end{equation}
that is obtained by computing the smooth conjugate $(\widehat{u}_{\varepsilon}^n)^{c,\varepsilon}  \in \R^n $  of $\widehat{u}_{\varepsilon}^n$. Note that if one denotes by $((\widehat{u}_{\varepsilon}^n)^{c,\varepsilon})^{c,\varepsilon}(x)$ the  smooth conjugate of $(\widehat{u}_{\varepsilon}^n)^{c,\varepsilon}$ at $x$, then our estimator can also be expressed as
\begin{equation*}
\widehat{Q}^n_{\varepsilon}(x) = x  - \nabla ((\widehat{u}_{\varepsilon}^n)^{c,\varepsilon})^{c,\varepsilon}(x).
\end{equation*}
 Recall that an alternative algorithm to solve the semi-discrete EOT problem is to consider the formulation \eqref{Semi-dualdisc0} as studied in \cite{bercu2020asymptotic,BB_GN,aude2016stochastic}. 
Based on independent samples $X_1,\ldots,X_m$ from $\mu$, these works approach the unique solution $\widetilde{v}_{n} \in \R^n$ of the problem \eqref{Semi-dualdisc0} when $m \to + \infty$ and $n$ is held fixed. 
Then, 
one can estimate the entropic map using, for all $x\in\XX$, 
\begin{equation}
\widetilde{Q}^n_{\varepsilon}(x)   = \sum_{j=1}^{n} \widetilde{F}_{j}(x) Y_{j} 
\hspace{1cm} \mbox{where} \hspace{1cm}
 \widetilde{F}_{j}(x) = \frac{ \exp \Bigl( \dfrac{ \widetilde{v}_{n,j} - c(x,Y_j) }{\varepsilon} \Bigr) }{\sum_{\ell = 1}^{n} \exp \Bigl( \dfrac{ \widetilde{v}_{n,\ell} - c(x,Y_\ell) }{\varepsilon} \Bigr)}.  \label{eq:hattildeQ}
\end{equation}
 The numerical performances of $\widehat{Q}^n_{\varepsilon}(x)$ are compared to those of $\widetilde{Q}^n_{\varepsilon}$ in \Cref{sec:num}.



\section{Main results}\label{sec:MainRes}

In order to state our main results, it is necessary to introduce two suitable assumptions related to 
the optimal sequence of Fourier coefficients $\theta^{\varepsilon} =  (\theta_\lambda^{\varepsilon})_{\lambda \in \Lambda}$ and
the second order Fr\'echet derivative of the function $H_{\varepsilon}$ given by \eqref{Se}.

\begin{hyp} \label{hyp:Winv}
The  sequence of Fourier coefficients $(\theta_\lambda^{\varepsilon})_{\lambda \in \Lambda}$ satisfies
$
\Vert \tht^\ee \Vert_{W^{-1}} < + \infty.
$
\end{hyp}

\begin{hyp} \label{hyp:DH2OPT}
For any regularization parameter $\varepsilon > 0$, there exists a positive constant $c_\ee$ such that 
the second order Fr\'echet derivative of the function $H_{\varepsilon}$ evaluated at the optimal value $\theta^{\varepsilon}$
satisfies, for any  $ \tau \in  \ell_{1}(\Lambda)$,
\begin{equation}
\label{DH2OPT}
D^2 H_\ee(\theta^{\varepsilon})[\tau,\tau]\geq c_\ee \Vert \tau \Vert^2_{\ell^2}.
\end{equation}
\end{hyp}

Our main theoretical result is devoted to the almost sure convergence of the random  sequence $(\widehat\theta_n)_n$ defined by \eqref{defthetan}.

\begin{thm}
\label{thmAS} 
Suppose that the initial value $\widehat\theta_0$ is any random element in $\ell^2(\Lambda)$ such that $\Vert  \widehat\theta_0\Vert_{W^{-1}}  < +\infty$.
Then, under Assumptions \ref{hyp:Winv} and \ref{hyp:DH2OPT}, the sequence $(\widehat\theta_n)$  converges almost surely in $\ell_2$ towards the solution $\theta^{\varepsilon}$
of the stochastic convex minimisation problem \eqref{Se}, i.e.
\begin{equation}
\label{thmAS1}
\lim_{n \to \infty}\Vert \widehat\theta_n - \theta^{\ee}   \Vert_{\ell_2} = 0 \hspace{1cm} \text{a.s.}
\end{equation}
Equivalently, we also have that
\begin{equation}
\label{thmAS2}
\lim_{n \to \infty}  \int_{\XX}|\widehat{u}_{\varepsilon}^{\, n}(x)  - u_\ee(x)|^2 d\mu(x) =  0 \hspace{1cm} \mbox{a.s.}
\end{equation}
\end{thm}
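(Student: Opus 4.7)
The overall plan is to analyse \eqref{defthetan} as a stochastic gradient descent in a Hilbertian framework associated to the operator $W$, by applying a Robbins--Siegmund quasi-martingale argument. The natural Lyapunov functional is $V_n := \Vert \widehat{\theta}_n - \theta^\varepsilon \Vert_{W^{-1}}^2$, and the choice of the $W^{-1}$ weights is algebraic: in the expansion of $V_{n+1}$ using the recursion $\widehat{\theta}_{n+1} = \widehat{\theta}_n - \gamma_n W g_n$ with $g_n := D_\theta h_\varepsilon(\widehat{\theta}_n, Y_{n+1})$, each factor $w_\lambda^{-1}$ cancels exactly one power of $w_\lambda$ coming from $W$, so that the cross-term reduces to the ordinary $\ell_2$ duality, while the square produces the weighted norm $\Vert g_n\Vert_W^2$. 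This yields
\begin{equation*}
V_{n+1} = V_n - 2\gamma_n \mathrm{Re}\langle \widehat{\theta}_n - \theta^\varepsilon, g_n\rangle_{\ell_2} + \gamma_n^2 \Vert g_n\Vert_W^2.
\end{equation*}
Assumption~\ref{hyp:Winv} together with $\Vert \widehat{\theta}_0\Vert_{W^{-1}}<\infty$ guarantees that $V_0$ is finite.

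Conditioning on $\mathcal{F}_n := \sigma(Y_1,\ldots,Y_n)$, three ingredients are used. First, the differentiability results from \cref{sec:useful} justify interchange of derivative and expectation, so that $\mathbb{E}[g_n \mid \mathcal{F}_n] = DH_\varepsilon(\widehat{\theta}_n)$. Second, since the components of $g_n$ are Fourier coefficients of the probability density $F_{\widehat{\theta}_n, Y_{n+1}}$ given by \eqref{def_F_tht}, they are uniformly bounded by $1$ in absolute value, whence the deterministic estimate $\Vert g_n\Vert_W^2 \leq \Vert w\Vert_{\ell_1}$. Third, convexity of $H_\varepsilon$ together with the optimality condition $DH_\varepsilon(\theta^\varepsilon)=0$ give the first-order inequality $\mathrm{Re}\langle \widehat{\theta}_n - \theta^\varepsilon, DH_\varepsilon(\widehat{\theta}_n)\rangle \geq H_\varepsilon(\widehat{\theta}_n) - H_\varepsilon(\theta^\varepsilon) \geq 0$. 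Putting these three pieces together produces
\begin{equation*}
\mathbb{E}[V_{n+1}\mid \mathcal{F}_n] \leq V_n - 2\gamma_n\bigl(H_\varepsilon(\widehat{\theta}_n) - H_\varepsilon(\theta^\varepsilon)\bigr) + \gamma_n^2 \Vert w\Vert_{\ell_1},
\end{equation*}
and the Robbins--Siegmund theorem, combined with \eqref{condgamma}, yields both that $V_n$ converges a.s.\ to some finite random limit $V_\infty \geq 0$ and that $\sum_n \gamma_n(H_\varepsilon(\widehat{\theta}_n) - H_\varepsilon(\theta^\varepsilon)) < \infty$ a.s. Since $\sum_n \gamma_n = \infty$, this forces $\liminf_n H_\varepsilon(\widehat{\theta}_n) = H_\varepsilon(\theta^\varepsilon)$ a.s.

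The last step, which I expect to be the main obstacle, is to identify $V_\infty=0$. Using $DH_\varepsilon(\theta^\varepsilon)=0$ and Assumption~\ref{hyp:DH2OPT}, a second-order Taylor expansion of $H_\varepsilon$ around $\theta^\varepsilon$ together with continuity of $D^2 H_\varepsilon$ yields a local quadratic minoration $H_\varepsilon(\theta) - H_\varepsilon(\theta^\varepsilon) \geq (c_\varepsilon/4)\Vert \theta - \theta^\varepsilon\Vert_{\ell_2}^2$ in a neighborhood of $\theta^\varepsilon$, which extends by global convexity into a coercive minoration $H_\varepsilon(\theta) - H_\varepsilon(\theta^\varepsilon) \geq \phi(\Vert\theta - \theta^\varepsilon\Vert_{\ell_2})$ with $\phi$ continuous, strictly positive away from $0$, and vanishing at the origin. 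Along any subsequence $(n_k)$ realising the liminf above, one then obtains $\Vert \widehat{\theta}_{n_k} - \theta^\varepsilon\Vert_{\ell_2} \to 0$. The genuine difficulty is to upgrade this $\ell_2$ subsequence convergence to $W^{-1}$ convergence of the full sequence, since $\ell_2$ is strictly weaker than the $W^{-1}$ norm controlling $V_n$. I would resolve this by exploiting the a.s.\ convergence of $V_n$ to a single limit together with coordinate-wise convergence $\widehat{\theta}_{n_k,\lambda} \to \theta^\varepsilon_\lambda$ along the subsequence and a uniform-integrability argument on the weighted series $\sum_\lambda w_\lambda^{-1}\vert \widehat{\theta}_{n_k,\lambda} - \theta^\varepsilon_\lambda\vert^2$, or alternatively by iterating the Robbins--Siegmund bound with a refined minoration valid in the $W^{-1}$ norm once the iterates enter a neighborhood of $\theta^\varepsilon$. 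Once $V_n \to 0$ is established, \eqref{thmAS1} follows from $\Vert\cdot\Vert_{\ell_2}^2 \leq \Vert w\Vert_{\ell_\infty}\Vert\cdot\Vert_{W^{-1}}^2$, and \eqref{thmAS2} is then equivalent to \eqref{thmAS1} by Parseval's identity applied to $\widehat{u}_\varepsilon^n - u_\varepsilon$ on the torus $\mathbb{T}^d$.
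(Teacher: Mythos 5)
Your Robbins--Siegmund setup (the Lyapunov functional $V_n=\Vert\widehat\theta_n-\theta^\ee\Vert_{W^{-1}}^2$, the algebraic expansion of $V_{n+1}$, the bound $\Vert g_n\Vert_W^2\leq\Vert w\Vert_{\ell_1}$, the conditional expectation step, and the conclusion that $V_n\to V_\infty$ a.s.\ with the drift series summable) coincides with the paper's. The gap is in the identification step. The local quadratic minoration $H_\ee(\theta)-H_\ee(\theta^\ee)\geq(c_\ee/4)\Vert\theta-\theta^\ee\Vert_{\ell_2}^2$ that you propose to derive from a second-order Taylor expansion plus continuity of $D^2H_\ee$ does not follow in this Banach setting, because of a norm mismatch. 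Fr\'echet differentiability and continuity of $D^2H_\ee$ are measured in the $\ell_1$ norm, so continuity only controls $\vert D^2H_\ee(\theta_t)[\tau,\tau]-D^2H_\ee(\theta^\ee)[\tau,\tau]\vert\leq\delta\Vert\tau\Vert_{\ell_1}^2$, whereas \cref{hyp:DH2OPT} gives only an $\ell_2$ lower bound. The ratio $\Vert\tau\Vert_{\ell_1}/\Vert\tau\Vert_{\ell_2}$ is unbounded over $\bar{\ell}_1(\Lambda)$, so the error term $\delta\Vert\tau\Vert_{\ell_1}^2$ can dominate $c_\ee\Vert\tau\Vert_{\ell_2}^2$ for every $\delta>0$: no quadratic $\ell_2$ minoration in an $\ell_1$-ball follows from this argument. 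What closes the gap is the generalized self-concordance inequality of \cref{prop:reg_Hee},
\begin{equation*}
DH_\ee(\theta)[\theta-\theta^\ee]\;\geq\; g\Bigl(\frac{2}{\ee}\Vert\theta-\theta^\ee\Vert_{\ell_1}\Bigr)\,D^2H_\ee(\theta^\ee)[\theta-\theta^\ee,\theta-\theta^\ee],
\end{equation*}
which is a global bound that carries the $\ell_1$ distance explicitly in the positive decreasing factor $g$, thereby dissolving the mismatch.

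The second issue is that you aim to show $V_\infty=0$, i.e.\ $W^{-1}$-norm convergence of the whole sequence, and you correctly flag that this looks hard; but it is also unnecessary, and the paper never establishes it. The theorem only asserts $\ell_2$ convergence. The paper is content with $V_n\to V$ a.s.\ for an arbitrary finite random limit $V$: by Cauchy--Schwarz $\Vert\widehat\theta_n-\theta^\ee\Vert_{\ell_1}^2\leq\Vert w\Vert_{\ell_1}V_n$, which controls the argument of $g$; combining with \cref{hyp:DH2OPT} and the self-concordance inequality gives $DH_\ee(\widehat\theta_n)[\widehat\theta_n-\theta^\ee]\geq c_\ee\,g(\cdot)\,\Vert\widehat\theta_n-\theta^\ee\Vert_{\ell_2}^2$ with the $g$-factor converging a.s.\ to a strictly positive constant; and since the summability $\sum_n\gamma_n DH_\ee(\widehat\theta_n)[\widehat\theta_n-\theta^\ee]<\infty$ together with $\sum\gamma_n=\infty$ forces $DH_\ee(\widehat\theta_n)[\widehat\theta_n-\theta^\ee]\to 0$, the $\ell_2$ convergence of $\widehat\theta_n$ follows directly. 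A related remark: you weaken the drift $B_n$ to $H_\ee(\widehat\theta_n)-H_\ee(\theta^\ee)$ via convexity; the paper keeps $DH_\ee(\widehat\theta_n)[\widehat\theta_n-\theta^\ee]$, which is precisely the quantity the self-concordance lemma is built to lower-bound.
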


\Cref{hyp:Winv} can be made more explicit by the choice of a specific sequence of weights $w = (w_\lambda)_{\lambda \in \Lambda}$ and by imposing regularity assumptions on the function $u_\ee \in L^1(\mu)$ given by \eqref{DEFu_e}.
For example, one may assume in dimension $d=2$ that $u_\ee$ is differentiable (with periodic conditions on the boundary on $\XX$) and that its gradient is square integrable,
\begin{equation*}
\int_{\XX} \| \nabla u(x) \|^2 d\mu(x) < + \infty.IU
\end{equation*}
Then, under such assumptions, one may use the fact that $\nabla u(x) =  \SL 2 \pi i \lambda \theta_\lambda^{\ee}  \phi_\lambda(x) $ and Parseval's identity, \cite{SteinWeiss}[Theorem 1.7], to obtain that 
\begin{equation*}
\SL \Vert \lambda \Vert^2 \vert \theta_\lambda^\ee \vert^2 < + \infty.
\end{equation*}
Consequently, for the specific choice $w_\lambda = \| \lambda \|^{-2}$, we find that \Cref{hyp:Winv} holds properly. In higher dimension $d$, it is necessary to make additional assumptions on the differentiability of $u_\ee$.
Note that we shall also prove in \Cref{borneINF_hessienne}  that for any $\theta,\tau \in \overline{\ell_1}(\Lambda)$, 
\begin{equation*}
D^2 H_\ee(\theta^\ee)[\tau,\tau] \ge \frac{1}{\ee} \left(2 - \int_\YY \int_\XX F_{\tht^\ee,y}^2(x) d\mu(x) d\nu(y)\right)\Vert \tau \Vert_{\ell_2}^2.
\end{equation*}
Therefore a sufficient condition for \Cref{hyp:DH2OPT} to hold is to assume that 
\begin{equation*}
\int_\YY \int_\XX F_{\tht^\ee,y}^2(x) d\mu(x) d\nu(y) < 2
\hspace{0.5cm} \mbox{with} \hspace{0.5cm}
c_\ee =  \frac{1}{\ee} \left(2 - \int_\YY \int_\XX F_{\tht^\ee,y}^2(x) d\mu(x) d\nu(y)\right).
\end{equation*}

\section{Properties of the objective  function $H_{\varepsilon}$} \label{sec:useful}

The purpose of this  section is to discuss various keystone properties of the functions $h_\ee$ and 
$H_{\varepsilon}$
that are needed to establish our main result on the convergence of our stochastic algorithm $\widehat{\theta}_{n}$.

Throughout this section, it is assumed that $\varepsilon > 0$. Moreover, all the results stated below are valid for any cost function $c$ that is lower semi-continuous and that belongs  to $L^1(\mu \otimes \nu)$ so that  regularized OT is well defined. Consequently, the restriction to the quadratic cost is no longer needed in this section.

Let us first discuss the first and second order Fr\'echet differentiability of the functions $H_{\varepsilon}$ and $h_\ee$ that are functions from the  Banach space $(\bar{\ell}_{1}(\Lambda), \|\cdot\|_{\ell_1})$ to $\R$.  The following proposition gives the expression of the first order Fr\'echet derivative, that we shall sometimes refer to as the gradient, of $h_{\varepsilon}$ and $H_\ee$, as well as upper bounds on their operator norm.

\begin{prop}\label{prop:grad}
For any $y \in \YY$, the first order Fr\'echet derivative of the function $h_\ee(\cdot,y)$  at $\theta \in \bar{\ell}_{1}(\Lambda)$  is the  linear operator $D_{\theta} h_\ee(\theta, y) :  \bar{\ell}_{1}(\Lambda) \to \R$ defined for any  $ \tau \in  \bar{\ell}_{1}(\Lambda)$ as 
\begin{equation}\label{Gradh}
D_{\theta} h_\ee(\theta, y)[ \tau] = \SL   \overline{\frac{\partial h_\ee(\theta,y) }{\partial \theta_\lambda}}  \tau_{\lambda}
\end{equation}
where
\begin{equation}
\label{part_deriv_h_ee}
\frac{\partial h_\ee(\theta,y) }{\partial \theta_\lambda} = \int_\XX \overline{\phi_\lambda(x)} F_{\theta,y}(x) d\mu(x).
\end{equation}
Moreover, the linear operator $D_{\theta} h_\ee(\theta, y)$ can be identified as an element of $\bar{\ell}_{\infty}(\Lambda)$  and its operator norm satisfies, for any $\theta \in \bar{\ell}_{1}(\Lambda)$ and $y \in \YY$,
\begin{equation} \label{eq:opnormgradienth}
\| D_{\theta} h_\ee(\theta, y)  \|_{op} = \sup_{\| \tau \|_{\ell_1} \leq 1}  |D_{\theta} h_\ee(\theta, y)[ \tau]| \le \sup\limits_{\lambda \in \Lambda} \left\vert \frac{\partial h_\ee(\theta,y) }{\partial \theta_\lambda} \right\vert  \leq 1.
\end{equation}
The first order Fr\'echet derivative of the function  $H_{\varepsilon}$ at $\theta \in \bar{\ell}_{1}(\Lambda)$  is the  linear operator $D H_{\varepsilon}(\theta) :  \bar{\ell}_{1}(\Lambda) \to \R$ defined for any  $ \tau \in  \bar{\ell}_{1}(\Lambda)$ as 
\begin{equation}\label{GradH}
D H_{\varepsilon}(\theta)[ \tau] = \SL  \overline{\frac{\partial H_{\varepsilon}(\theta)}{\partial \theta_\lambda}}   \tau_{\lambda}
\end{equation}
where
\begin{equation*}
\frac{\partial H_{\varepsilon}(\theta)}{\partial \theta_\lambda}   =  \int_{\YY} \frac{\partial h_\ee(\theta,y) }{\partial \theta_\lambda}   d \nu(y).
\end{equation*}
Moreover, the operator norm of the  linear operator 
$
D H_{\varepsilon}(\theta) 
$
satisfies, for any $\theta \in \bar{\ell}_{1}(\Lambda)$,
\begin{equation} \label{eq:opnormgradientH}
\| D H_{\varepsilon}(\theta)  \|_{op} = \sup_{\| \tau \|_{\ell_1} \leq 1}  |D H_{\varepsilon}(\theta) [ \tau]| \leq 
\sup\limits_{\lambda \in \Lambda} \left\vert \frac{\partial H_\ee(\theta) }{\partial \theta_\lambda} \right\vert \leq 1.
\end{equation}
\end{prop}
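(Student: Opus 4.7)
The plan is to first establish Fréchet differentiability of $h_\varepsilon(\cdot, y)$ at any $\theta \in \bar{\ell}_1(\Lambda)$ via the chain rule, by decomposing $h_\varepsilon(\theta,y)$ as the composition $\theta \mapsto u_\theta \mapsto \varphi(u_\theta)$ with $u_\theta(x) = \sum_{\lambda \in \Lambda} \theta_\lambda \phi_\lambda(x)$ and $\varphi(u) = \varepsilon \log \int_{\XX} \exp\bigl( (u(x) - c(x,y))/\varepsilon \bigr) d\mu(x) + \varepsilon$. The first map is linear and bounded from $(\bar{\ell}_1(\Lambda), \|\cdot\|_{\ell_1})$ into $(L^\infty(\mu), \|\cdot\|_\infty)$, since $\|u_\theta\|_\infty \leq \|\theta\|_{\ell_1}$ because $|\phi_\lambda(x)|=1$. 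The second map is well defined on $L^\infty(\mu)$ as soon as $c \in L^1(\mu \otimes \nu)$ and lower semi-continuous, and a direct computation using the dominated convergence theorem shows that $\varphi$ is Fréchet differentiable on $L^\infty(\mu)$ with derivative $v \mapsto \int_{\XX} v(x) F_{\theta,y}(x) d\mu(x)$, where $F_{\theta,y}$ is the probability density defined in \eqref{def_F_tht}.

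Combining the chain rule with these two facts yields, for any $\tau \in \bar{\ell}_1(\Lambda)$,
\begin{equation*}
D_\theta h_\varepsilon(\theta, y)[\tau] = \int_{\XX} \Bigl( \SL \tau_\lambda \phi_\lambda(x) \Bigr) F_{\theta,y}(x) d\mu(x) = \SL \tau_\lambda \int_{\XX} \phi_\lambda(x) F_{\theta,y}(x) d\mu(x),
\end{equation*}
where the exchange between sum and integral is legitimate because $\sum_\lambda |\tau_\lambda| |\phi_\lambda(x)| = \|\tau\|_{\ell_1} < +\infty$. Since $F_{\theta,y}$ is real-valued, one has $\int_{\XX} \phi_\lambda(x) F_{\theta,y}(x) d\mu(x) = \overline{\int_{\XX} \overline{\phi_\lambda(x)} F_{\theta,y}(x) d\mu(x)} = \overline{\partial h_\varepsilon(\theta,y)/\partial \theta_\lambda}$, which is exactly the formula \eqref{Gradh}--\eqref{part_deriv_h_ee}.

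Next, I would identify $D_\theta h_\varepsilon(\theta,y)$ as an element of $\bar{\ell}_\infty(\Lambda)$ by verifying the Hermitian symmetry of the Fourier coefficients of $F_{\theta,y}$: a change of variable gives $\partial h_\varepsilon(\theta,y)/\partial \theta_{-\lambda} = \int \phi_\lambda F_{\theta,y} d\mu = \overline{\partial h_\varepsilon(\theta,y)/\partial \theta_\lambda}$. The bound \eqref{eq:opnormgradienth} then follows from Hölder's inequality $|D_\theta h_\varepsilon(\theta,y)[\tau]| \leq \sup_\lambda |\partial h_\varepsilon/\partial \theta_\lambda| \cdot \|\tau\|_{\ell_1}$, combined with the key observation that $F_{\theta,y}$ is a probability density, so
\begin{equation*}
\Bigl| \frac{\partial h_\varepsilon(\theta,y)}{\partial \theta_\lambda} \Bigr| \leq \int_{\XX} |\phi_\lambda(x)| F_{\theta,y}(x) d\mu(x) = 1.
\end{equation*}

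Finally, I would pass to $H_\varepsilon(\theta) = \mathbb{E}[h_\varepsilon(\theta,Y)]$ by differentiating under the expectation. The uniform bound $\|D_\theta h_\varepsilon(\theta,y)\|_{op} \leq 1$ just proved gives the Lipschitz estimate $|h_\varepsilon(\theta+\tau, y) - h_\varepsilon(\theta,y)| \leq \|\tau\|_{\ell_1}$ uniformly in $y \in \YY$, which serves as the dominating function to justify exchanging limit and integral. This yields \eqref{GradH} with $\partial H_\varepsilon/\partial \theta_\lambda = \int_\YY \partial h_\varepsilon(\theta,y)/\partial \theta_\lambda \, d\nu(y)$, and the bound \eqref{eq:opnormgradientH} then follows by the same Hölder argument together with $|\partial H_\varepsilon/\partial \theta_\lambda| \leq \int_\YY |\partial h_\varepsilon(\theta,y)/\partial \theta_\lambda| d\nu(y) \leq 1$. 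The main technical point I expect to require care is the Fréchet (as opposed to merely Gâteaux) differentiability of $\varphi$ on $L^\infty(\mu)$, which I would handle via a second-order Taylor expansion of the exponential, exploiting the uniform bound $\|u_\theta\|_\infty \leq \|\theta\|_{\ell_1}$ to obtain a remainder that is $o(\|\tau\|_{\ell_1})$ uniformly in $y$.
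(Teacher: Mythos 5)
Your proof is correct, and it follows a genuinely different route from the one in the paper. The paper establishes Fréchet differentiability of $h_\ee(\cdot,y)$ by applying a Leibniz-type lemma for Fréchet differentiation under the Lebesgue integral (their \cref{lem:Leibniz}) directly to $G_y(\tht) = \int_\XX \exp(g_y(\tht,x))\,d\mu(x)$; this requires constructing an explicit $L^1(\mu)$-dominating function $K_y(x)$ via the mean value theorem on a ball $B(\theta_0,R)$ in $\bar\ell_1(\Lambda)$, and the same lemma is then invoked a second time for the integration over $\YY$. Your alternative factors $h_\ee(\cdot,y)$ through the bounded linear injection $\theta \mapsto u_\theta$ from $\bar\ell_1(\Lambda)$ into $L^\infty(\mu)$ followed by the smooth functional $\varphi(u) = \ee\log\int_\XX\exp((u-c(\cdot,y))/\ee)\,d\mu + \ee$, whose Fréchet differentiability on $L^\infty(\mu)$ is an elementary second-order Taylor estimate on the exponential ($|e^t - 1 - t| \le \tfrac12 t^2 e^{|t|}$ gives a remainder $O(\|v\|_\infty^2)$ uniformly on bounded sets, with no dependence on the unbounded factor $\exp(-c/\ee)$ since $c\ge 0$). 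The chain rule then yields the formula at once, and the exchange of sum and integral is a straightforward Fubini argument using $\sum_\lambda |\tau_\lambda| < \infty$. This is arguably cleaner, since it avoids the explicit dominating function and isolates the nonlinearity in a single infinite-dimensional map on a familiar space. For the passage to $H_\ee$ both proofs reduce to the same mechanism: the uniform-in-$y$ bound $\|D_\theta h_\ee(\theta,y)\|_{op}\le 1$ supplies the Lipschitz domination needed to differentiate under $\int_\YY\cdot\, d\nu(y)$, so there is no substantive difference in that step. The Hermitian-symmetry check placing $D_\theta h_\ee(\theta,y)$ in $\bar\ell_\infty(\Lambda)$, and the Hölder argument giving \eqref{eq:opnormgradienth}--\eqref{eq:opnormgradientH}, agree with the paper's.
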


The proposition below gives the expression of the second order Fr\'echet derivative, that we shall sometimes refer to as the Hessian, 
of $h_{\varepsilon}$ and $H_\ee$ and upper bounds on their operator norm.

\begin{prop}\label{prop:Hess}
For any $y \in \YY$, the second order Fr\'echet derivative of the function $h_\ee(\cdot,y)$  at $\theta \in \bar{\ell}_{1}(\Lambda)$  is the following symmetric  bilinear mapping  from $\bar{\ell}_{1}(\Lambda)  \times \bar{\ell}_{1}(\Lambda) $ to $\R$
\begin{eqnarray}\label{eq:D2h_ee}
\hspace{1cm} D^2_\tht h_\ee(\theta,y)[\tau,\tau']  & = &  \frac{1}{\ee}  \sum\limits_{\lambda' \in \Lambda} \SL \tau'_{\lambda'} \overline{\tau_{\lambda}} \int_{\XX }\phi_{\lambda'}(x) \overline{\phi_{\lambda}(x)} F_{\theta,y}(x) d\mu (x)\\
 & -&  \frac{1}{\ee} \left( \SL \tau'_{\lambda} \int_{\XX } \phi_{\lambda}(x) F_{\theta,y}(x) d\mu(x)  \right) \!\!\overline{\left( \SL \tau_{\lambda} \int_{\XX } \phi_{\lambda(x)}F_{\theta,y}(x) d\mu(x) \right)} \nonumber.
\end{eqnarray}
and its operator norm satisfies, for any $\theta \in \bar{\ell}_{1}(\Lambda)$ and $y \in \YY$,
\begin{equation} \label{eq:opnormHessh}
\|D^2_\tht h_\ee(\theta,y)\|_{op} = \sup_{\| \tau \|_{\ell_1} \leq 1, \| \tau' \|_{\ell_1} \leq 1} |D^2_\tht h_\ee(\theta,y) [\tau,\tau'] | \leq  \frac{1}{\ee}.
\end{equation}
Moreover, the second order Fr\'echet derivative  of $H_{\varepsilon} : \bar{\ell}_{1}(\Lambda) \to \R$ is the symmetric  bilinear mapping  from $\bar{\ell}_{1}(\Lambda)  \times \bar{\ell}_{1}(\Lambda) $ to $\R$ defined by
\begin{eqnarray}
\label{eq:D2H}
\hspace{1cm}  D^2 H_\ee(\theta)[\tau,\tau']  & = &  \frac{1}{\ee}  \sum\limits_{\lambda' \in \Lambda} \SL \tau'_{\lambda'} \overline{\tau_{\lambda}}\int_{\YY }\!\! \int_{\XX }\phi_{\lambda'}(x) \overline{\phi_{\lambda}(x)} F_{\theta,y}(x) d\mu (x) d\nu(y)  \\
&-&  \!\! \frac{1}{\ee} \int_{\YY } \!\!\left( \SL \tau'_{\lambda} \! \int_{\XX } \!\!\phi_{\lambda}(x) F_{\theta,y}(x) d\mu(x)  \right)\!\! \overline{\left( \SL \tau_{\lambda}  \!\int_{\XX } \!\!\phi_{\lambda}(x)F_{\theta,y}(x) d\mu (x)\right)} d\nu(y), \nonumber
\end{eqnarray}
and its operator norm satisfies, for any $\theta \in \bar{\ell}_{1}(\Lambda)$,
\begin{equation} \label{eq:opnormHessH}
\|D^2 H_\ee(\theta)\|_{op} = \sup_{\| \tau \|_{\ell_1} \leq 1, \| \tau' \|_{\ell_1} \leq 1} |D^2 H_\ee(\theta) [\tau,\tau'] | \leq  \frac{1}{\ee}.
\end{equation}
\end{prop}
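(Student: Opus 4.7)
The approach is to differentiate the first-order expression \eqref{part_deriv_h_ee} from \cref{prop:grad} with respect to $\theta_{\lambda'}$. Writing $F_{\theta,y}(x) = \exp(A_\theta(x)/\ee)/Z_\theta(y)$ with $A_\theta(x) = \sum_\lambda \theta_\lambda \phi_\lambda(x) - c(x,y)$ linear in $\theta$ and $Z_\theta(y) = \int_\XX \exp(A_\theta(x)/\ee) d\mu(x)$, the quotient rule yields
\[ \frac{\partial F_{\theta,y}(x)}{\partial \theta_{\lambda'}} = \frac{1}{\ee} F_{\theta,y}(x) \Bigl[\phi_{\lambda'}(x) - \int_\XX \phi_{\lambda'}(x') F_{\theta,y}(x') d\mu(x')\Bigr]. \]
Substituting into \eqref{part_deriv_h_ee} produces the mixed partial $\partial^2 h_\ee / \partial \theta_\lambda \partial \theta_{\lambda'}$ as a covariance-type expression, and assembling $\sum_{\lambda, \lambda'} \overline{\tau_\lambda}\tau'_{\lambda'} \partial^2 h_\ee / \partial \theta_\lambda \partial \theta_{\lambda'}$ recovers formula \eqref{eq:D2h_ee}.

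To upgrade this formal computation into a genuine Fr\'echet derivative on $\bar{\ell}_1(\Lambda)$, the plan is first to verify that the candidate bilinear form defined by \eqref{eq:D2h_ee} is a bounded symmetric bilinear form on $\bar{\ell}_1(\Lambda)\times\bar{\ell}_1(\Lambda)$, boundedness being precisely the operator norm bound \eqref{eq:opnormHessh} handled below. Then I would control the Taylor remainder
\[ R_y(\sigma) = D_\theta h_\ee(\theta+\sigma, y) - D_\theta h_\ee(\theta, y) - D^2_\theta h_\ee(\theta, y)[\sigma, \cdot] \]
viewed as an element of $\bar{\ell}_\infty(\Lambda)$, by expanding $F_{\theta+\sigma,y}$ around $F_{\theta,y}$ via the explicit Gibbs form and using the key bound $\|g_\sigma\|_\infty \le \|\sigma\|_{\ell_1}$ with $g_\sigma(x) = \sum_\lambda \sigma_\lambda \phi_\lambda(x)$ (valid since $|\phi_\lambda|\equiv 1$). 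This yields $\|R_y(\sigma)\|_{op} = o(\|\sigma\|_{\ell_1})$; these routine but tedious estimates parallel the treatment of the first-order derivative and can be deferred to the supplement.

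The operator norm bound rests on recognizing the diagonal quadratic form as a variance. Setting $g_\tau(x) = \sum_\lambda \tau_\lambda \phi_\lambda(x)$, specializing \eqref{eq:D2h_ee} to $\tau = \tau'$ yields
\[ D^2_\theta h_\ee(\theta, y)[\tau, \tau] = \frac{1}{\ee}\Bigl[\int_\XX |g_\tau(x)|^2 F_{\theta,y}(x) d\mu(x) - \Bigl|\int_\XX g_\tau(x) F_{\theta,y}(x) d\mu(x)\Bigr|^2\Bigr], \]
which is the variance of $g_\tau$ under the probability density $F_{\theta,y}$. Hence $D^2_\theta h_\ee(\theta, y)$ is positive-semidefinite, and bounded above by $(1/\ee)\int |g_\tau|^2 F_{\theta,y} d\mu \le (1/\ee)\|\tau\|_{\ell_1}^2$ since $\|g_\tau\|_\infty \le \|\tau\|_{\ell_1}$. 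The Cauchy-Schwarz inequality for positive-semidefinite bilinear forms then gives $|D^2_\theta h_\ee(\theta, y)[\tau, \tau']|^2 \le D^2_\theta h_\ee(\theta, y)[\tau, \tau]\cdot D^2_\theta h_\ee(\theta, y)[\tau', \tau'] \le (1/\ee^2)\|\tau\|_{\ell_1}^2 \|\tau'\|_{\ell_1}^2$, which is \eqref{eq:opnormHessh}.

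For $H_\ee(\theta) = \EE[h_\ee(\theta, Y)]$, the uniform bound $\|D^2_\theta h_\ee(\theta, y)\|_{op}\le 1/\ee$ independent of $y$, combined with integrability of $c(x,y)$ in $y$, justifies exchanging derivative and expectation via dominated convergence, yielding formula \eqref{eq:D2H}. Taking expectation in the variance representation gives $D^2 H_\ee(\theta)[\tau, \tau] = \EE\bigl[D^2_\theta h_\ee(\theta, Y)[\tau, \tau]\bigr] \le (1/\ee)\|\tau\|_{\ell_1}^2$, whence \eqref{eq:opnormHessH} follows by Cauchy-Schwarz. The main obstacle is the rigorous Fr\'echet remainder estimate, requiring careful nonlinear control of $F_{\theta, y}$ in $\theta$; all other steps reduce to the variance interpretation of the Hessian familiar from exponential families.
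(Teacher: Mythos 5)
Your proposal is correct and follows essentially the same route as the paper: differentiate the first-order expression by computing $D_\theta F_{\theta,y}$ (which gives the covariance/variance structure), recognize the diagonal quadratic form $D^2_\theta h_\ee(\theta,y)[\tau,\tau]$ as $(1/\ee)\operatorname{Var}_{F_{\theta,y}\,d\mu}(g_\tau)$, bound it above by $(1/\ee)\|\tau\|_{\ell_1}^2$ via $\|g_\tau\|_\infty\le\|\tau\|_{\ell_1}$, and then push everything through the $y$-integral for $H_\ee$. The one genuine (and welcome) difference is in how you pass from the diagonal bound to the full operator norm: you invoke Cauchy--Schwarz for positive-semidefinite symmetric bilinear forms, $|B[\tau,\tau']|^2\le B[\tau,\tau]B[\tau',\tau']$, which immediately yields the sharp bound $(1/\ee)\|\tau\|_{\ell_1}\|\tau'\|_{\ell_1}$. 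The paper instead uses a polarization identity $4B[\tau,\tau']=B[\tau+\tau',\tau+\tau']-B[\tau-\tau',\tau-\tau']$ together with positivity; the two arguments are equivalent in substance, but your Cauchy--Schwarz version is a bit cleaner and avoids the slight sloppiness in the paper's intermediate inequality (it discards the $-B[\tau-\tau',\tau-\tau']$ term in a way that controls only one sign of $B[\tau,\tau']$). For the rigorous Fr\'echet-differentiability step, the paper leans on a Leibniz-rule lemma for differentiation of integral functionals on Banach spaces (as it did for \cref{prop:grad}), whereas you propose a direct Taylor-remainder estimate using $\|g_\sigma\|_\infty\le\|\sigma\|_{\ell_1}$; both are valid, and you correctly identify this as the technical crux to be filled in, matching where the paper's proof also delegates to its lemma.
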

%
We now provide useful results on the regularity of $H_\ee$.
\begin{prop}\label{prop:convexity}
For any $y \in \YY$,   the functions $h_{\ee}(\cdot,y)$ and $H_{\varepsilon}$ are strictly convex on $\bar{\ell}_{1}(\Lambda)$. 
\end{prop}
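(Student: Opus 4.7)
The plan is to read off the strict convexity directly from the Hessian formulas \eqref{eq:D2h_ee} and \eqref{eq:D2H} given by \cref{prop:Hess}. Fix $y \in \YY$ and a nonzero $\tau \in \bar{\ell}_1(\Lambda)$, and introduce the trigonometric polynomial
\begin{equation*}
g_\tau(x) = \SL \tau_\lambda \phi_\lambda(x),
\end{equation*}
which is continuous and bounded on $[0,1]^d$ since $\|g_\tau\|_\infty \leq \|\tau\|_{\ell_1}$. Specializing \eqref{eq:D2h_ee} to $\tau = \tau'$ and recognizing the double Fourier sum as $|g_\tau(x)|^2$ yields
\begin{equation*}
D^2_\theta h_\ee(\theta,y)[\tau,\tau] = \frac{1}{\ee}\left( \int_\XX |g_\tau(x)|^2 F_{\theta,y}(x)\, d\mu(x) - \left| \int_\XX g_\tau(x)\, F_{\theta,y}(x)\, d\mu(x)\right|^2\right).
\end{equation*}

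Since $F_{\theta,y}$ is a probability density with respect to $\mu$, the right-hand side is exactly $\ee^{-1}$ times the variance of $g_\tau(X)$ under the probability measure $F_{\theta,y}\, d\mu$. By Cauchy--Schwarz it is non-negative, with equality if and only if $g_\tau$ is constant $F_{\theta,y}\, d\mu$-almost everywhere. The key observation is that $F_{\theta,y}(x) > 0$ for every $x \in \XX$, since the numerator in \eqref{def_F_tht} is a strictly positive exponential and the denominator is finite (by boundedness of $g_\theta$ for $\theta \in \bar{\ell}_1(\Lambda)$ and the fact that $c \in L^1(\mu \otimes \nu)$). Therefore the variance vanishes only when $g_\tau$ is $\mu$-a.e.\ constant on $[0,1]^d$.

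Now I invoke the orthonormality of the Fourier basis $\{\phi_\lambda\}_{\lambda \in \ZZ^d}$ in $L^2(\mu)$: a constant function has all Fourier coefficients zero at frequencies $\lambda \neq 0$, and since $\Lambda = \ZZ^d \setminus \{0\}$ excludes the constant mode, the identification $g_\tau \equiv \mathrm{const}$ forces $\tau_\lambda = 0$ for every $\lambda \in \Lambda$, i.e.\ $\tau = 0$, a contradiction. Hence $D^2_\theta h_\ee(\theta,y)[\tau,\tau] > 0$ for every nonzero $\tau$, which is the standard twice-differentiable criterion for strict convexity of $\theta \mapsto h_\ee(\theta,y)$ on the convex set $\bar{\ell}_1(\Lambda)$ (apply it to the one-variable restriction $t \mapsto h_\ee(t\theta_1 + (1-t)\theta_2, y)$).

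For the aggregated function $H_\ee$, formula \eqref{eq:D2H} identifies its Hessian as the $\nu$-average of the pointwise Hessians, namely
\begin{equation*}
D^2 H_\ee(\theta)[\tau,\tau] = \int_\YY D^2_\theta h_\ee(\theta,y)[\tau,\tau]\, d\nu(y).
\end{equation*}
The integrand is strictly positive for every $y \in \YY$ as soon as $\tau \neq 0$, by the argument above, so $D^2 H_\ee(\theta)[\tau,\tau] > 0$ for every nonzero $\tau$, giving strict convexity of $H_\ee$ on $\bar{\ell}_1(\Lambda)$. The only delicate point---really the only thing to verify carefully---is the strict positivity of $F_{\theta,y}$ on all of $\XX$, which is what distinguishes the a.e.\ constancy of $g_\tau$ under the weighted measure from $\mu$-a.e.\ constancy; everything else reduces to a variance-inequality and Fourier uniqueness.
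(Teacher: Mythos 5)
Your proof is correct, and it takes a genuinely different route from the paper's. The paper proves strict convexity directly at the function level: it writes $h_\ee(\theta,y) = \ee\log\int_{\XX}\exp(g_y(\theta,x))d\mu(x)+\ee$, applies H\"older's inequality with exponents $1/t$ and $1/(1-t)$ to obtain convexity of the log-sum-exp, and then analyzes the equality case of H\"older, which forces the two potentials to differ by a $\mu$-a.e.\ constant; the normalization $\int u\,d\mu=0$ (equivalently, the exclusion of $\lambda=0$ from $\Lambda$) then kills the constant. Your argument instead operates at the second-order level: you specialize the Hessian in \cref{prop:Hess} to $\tau=\tau'$, recognize it as $\ee^{-1}$ times the variance of the real-valued trigonometric series $g_\tau$ under the tilted measure $F_{\theta,y}\,d\mu$, observe that $F_{\theta,y}>0$ everywhere makes that measure equivalent to $\mu$, and then use uniqueness of Fourier coefficients to conclude that the variance can vanish only when $\tau=0$. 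Both proofs hinge on the same structural fact --- the constant mode is absent from $\Lambda$ --- but yours packages it as Fourier orthogonality rather than as the normalization \eqref{eq:normcond}. The H\"older argument is more elementary in that it needs no differentiability, whereas yours leans on \cref{prop:Hess}; in exchange, the variance formulation is more transparent and immediately identifies the directions in which the Hessian could degenerate. One minor remark: the finiteness of the denominator in $F_{\theta,y}$ is most cleanly justified by boundedness of $g_\theta$ (since $\theta\in\bar{\ell}_1(\Lambda)$) together with $c\ge 0$, so that $\exp((g_\theta-c)/\ee)$ is bounded on $\XX$; the global $L^1(\mu\otimes\nu)$ hypothesis on $c$ you mention would only give integrability for $\nu$-a.e.\ $y$, which would suffice for $H_\ee$ but not for the stated claim about $h_\ee(\cdot,y)$ for \emph{every} $y$.
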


As already noticed in previous works \cite{bercu2020asymptotic,aude2016stochastic} dealing with related objective functions, the function $H_\ee$ is not strongly convex. 
Nevertheless, one can obtain a local strong convexity property of the function $H_\ee$ in the neighborhood of its minimizer $\theta^{\varepsilon}$. This result is a consequence of the notion of generalized self-concordance introduced in \cite{Bach14},
which has been
shown to hold for regularized semi-discrete OT in \cite{bercu2020asymptotic}, and which we extend to the setting of the functional $H_\ee$ on the Banach space $ \bar{\ell}_{1}(\Lambda)$.

\begin{prop}\label{prop:reg_Hee}
For all $\theta \in \bar{\ell}_{1}(\Lambda)$, we have
\begin{equation}\label{eq:boundHeps}
 H_\ee(\theta) - H_\ee(\tht^\ee) \le \frac{1}{\ee}\Vert \tht - \theta^\ee\Vert_{\ell_1}^2.
\end{equation}
Moreover, for any $\theta \in \overline{\ell_1}(\Lambda)$, the following local strong convexity property holds 
\begin{equation}\label{eq:Grad_vs_hess}
DH_\ee(\tht)[\tht-\tht^\ee] \ge 
g\Big(\frac{2}{\ee}  \Vert \tht - \tht^\ee\Vert_{\ell_1}\Big) D^2H_\ee(\tht^\ee)[\tht-\tht^\ee,\tht-\tht^\ee],
\end{equation}
where,
for all $x>0$,
\begin{equation}\label{defg}
g(x)=\frac{1-\exp(-x)}{x}.
\end{equation}

\end{prop}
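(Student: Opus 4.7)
\textbf{First bound.} The inequality \eqref{eq:boundHeps} is the easy half. Since $\theta^{\varepsilon}$ is the minimizer of $H_{\varepsilon}$ on $\bar{\ell}_1(\Lambda)$, we have $DH_{\varepsilon}(\theta^{\varepsilon})=0$, and Taylor's formula in the Banach space $(\bar{\ell}_1(\Lambda),\|\cdot\|_{\ell_1})$ gives
\begin{equation*}
H_{\varepsilon}(\theta)-H_{\varepsilon}(\theta^{\varepsilon})=\int_0^1 (1-t)\,D^2 H_{\varepsilon}\bigl(\theta^{\varepsilon}+t(\theta-\theta^{\varepsilon})\bigr)[\theta-\theta^{\varepsilon},\theta-\theta^{\varepsilon}]\,dt.
\end{equation*}
The operator norm bound $\|D^2 H_{\varepsilon}(\cdot)\|_{op}\le 1/\varepsilon$ from \eqref{eq:opnormHessH} then yields $H_{\varepsilon}(\theta)-H_{\varepsilon}(\theta^{\varepsilon})\le\tfrac{1}{2\varepsilon}\|\theta-\theta^{\varepsilon}\|_{\ell_1}^2\le \tfrac{1}{\varepsilon}\|\theta-\theta^{\varepsilon}\|_{\ell_1}^2$.

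\textbf{Generalized self-concordance.} The core of the second inequality is to establish that, for every $\theta\in \bar{\ell}_1(\Lambda)$ and every $\tau\in\bar{\ell}_1(\Lambda)$,
\begin{equation}\label{eq:selfconc-plan}
\bigl|D^3 H_{\varepsilon}(\theta)[\tau,\tau,\tau]\bigr|\le \frac{2}{\varepsilon}\|\tau\|_{\ell_1}\, D^2 H_{\varepsilon}(\theta)[\tau,\tau].
\end{equation}
The plan is to differentiate once more the expression \eqref{eq:D2H} for $D^2 H_{\varepsilon}$. Writing $v_\tau(x)=\sum_{\lambda}\tau_\lambda\phi_\lambda(x)$ and using that $F_{\theta,y}$ is a probability density, a direct calculation identifies
\begin{equation*}
D^2 H_{\varepsilon}(\theta)[\tau,\tau]=\frac{1}{\varepsilon}\int_{\YY}\mathrm{Var}_{F_{\theta,y}}(v_\tau)\,d\nu(y),\qquad
D^3 H_{\varepsilon}(\theta)[\tau,\tau,\tau]=\frac{1}{\varepsilon^2}\int_{\YY}\mathbb{E}_{F_{\theta,y}}\!\bigl[(v_\tau-\bar v_\tau)^3\bigr]d\nu(y),
\end{equation*}
where $\bar v_\tau=\mathbb{E}_{F_{\theta,y}}[v_\tau]$. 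Since $|\phi_\lambda(x)|=1$, the trigonometric polynomial satisfies $\|v_\tau\|_\infty\le \|\tau\|_{\ell_1}$, hence $|v_\tau-\bar v_\tau|\le 2\|\tau\|_{\ell_1}$ pointwise. Bounding the third central moment by the second yields \eqref{eq:selfconc-plan}.

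\textbf{From self-concordance to \eqref{eq:Grad_vs_hess}.} Set $\tau=\theta-\theta^{\varepsilon}$ and define $\varphi(t)=H_{\varepsilon}(\theta^{\varepsilon}+t\tau)$ on $[0,1]$. Then $\varphi'(0)=DH_{\varepsilon}(\theta^{\varepsilon})[\tau]=0$, and \eqref{eq:selfconc-plan} rewrites as $|\varphi'''(t)|\le (2\|\tau\|_{\ell_1}/\varepsilon)\,\varphi''(t)$. Since $\varphi''\ge 0$, the differential inequality $\tfrac{d}{dt}\log \varphi''(t)\ge -2\|\tau\|_{\ell_1}/\varepsilon$ gives
\begin{equation*}
\varphi''(t)\ge \varphi''(0)\,\exp\!\Bigl(-\tfrac{2\|\tau\|_{\ell_1}}{\varepsilon}\,t\Bigr).
\end{equation*}
Integrating on $[0,1]$ and using $\varphi'(1)=DH_{\varepsilon}(\theta)[\tau]$ together with $\varphi''(0)=D^2H_{\varepsilon}(\theta^{\varepsilon})[\tau,\tau]$ gives exactly $DH_{\varepsilon}(\theta)[\tau]\ge g\bigl(\tfrac{2\|\tau\|_{\ell_1}}{\varepsilon}\bigr)\,D^2H_{\varepsilon}(\theta^{\varepsilon})[\tau,\tau]$ with $g$ defined in \eqref{defg}.

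\textbf{Main obstacle.} The only delicate point is the rigorous computation of the third Fréchet derivative $D^3 H_{\varepsilon}$ on $\bar{\ell}_1(\Lambda)$ together with the differentiation under the integral sign with respect to $\nu$; once the variance/third-cumulant representation is available, the bound $\|v_\tau\|_\infty\le \|\tau\|_{\ell_1}$ carries all the weight and gives the self-concordance constant $R=1/\varepsilon$ with the $\ell_1$ norm, which is precisely what the rest of the argument requires.
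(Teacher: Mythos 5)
Your proposal is correct and follows essentially the same route as the paper: the same second-order Taylor expansion with integral remainder for \eqref{eq:boundHeps}, the same variance/third-cumulant identification of $D^2 H_\varepsilon$ and $D^3 H_\varepsilon$ with the pointwise bound $\|v_\tau\|_\infty\le\|\tau\|_{\ell_1}$ to get the generalized self-concordance constant $2/\varepsilon$, and the same Gronwall-type integration of $\log\varphi''$ to produce $g$. The only cosmetic difference is that the paper carries out the cumulant computation for $\phi(t)=h_\varepsilon(\theta_t,y)$ at fixed $y$ and then integrates in $\nu$, which sidesteps the direct computation of $D^3 H_\varepsilon$ that you flag as the delicate step.
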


\section{Numerical experiments} \label{sec:num} 

\subsection{Influence of the dimension $d$}
We first investigate the convergence of our numerical scheme for the estimation of the entropic map using various values of the dimension $d$ to analyse its impact of the computational performances of our approach. 

To do so, our estimator $\widehat{Q}^n_{\varepsilon}(x)$ in \eqref{eq:hatQ} is compared to $\widetilde{Q}^n_{\varepsilon}(x)$ in \eqref{eq:hattildeQ} where the dual potential $\widetilde{v}_n \in \R^n$ needed to compute $\widetilde{Q}^n_{\varepsilon}(x)$ is obtained  with either the Sinkhorn algorithm \cite{cuturi2013sinkhorn} or a stochastic algorithm as proposed in \cite{bercu2020asymptotic,aude2016stochastic}. 
Starting from the uniform distribution on $[0,1]^d$, we consider the map 
$
Q : x \mapsto L^TL x + b
$ 
where $L$ is a lower triangular matrix and $b\in \R^d$, both filled with ones.
Trivially, $Q$ is the gradient of a convex function, so that it is the MK quantile function of $\nu = Q_\# \mu$. 
Thus, by Monte-Carlo sampling, we are able to approximate the mean squared error of any estimator  $\widehat{Q}$ defined as
\begin{equation}\label{defMSE}
\mbox{MSE}(\widehat{Q} ) = \mathbb{E}\left[ \Vert \widehat{Q} (X) - Q(X) \Vert^2\right].
\end{equation}
The three ways of estimating $Q$ are based on iterative schemes that we let running until convergence of the MSE below the value $10^{-2}$ for $d=2,3,4$, and by taking $\ee=0.005$.

\Cref{fig:TimeVsOthers} illustrates the time before convergence, in seconds, as a function of $n$ (the number of observations). In what follows,
the continuous, semi-discrete, and discrete approaches  refer to \Cref{alg} with $W$ the identity matrix, the stochastic algorithm from \cite{bercu2020asymptotic,aude2016stochastic}, and the Sinkhorn algorithm \cite{cuturi2013sinkhorn} respectively. 
For $\XX_p$ given in \Cref{alg}, the uniform distribution on $\XX_p$ is taken as a discrete reference measure for the Sinkhorn algorithm to ensure a fair comparison with our algorithm. 
The MSE is estimated through $m=500$ other random samples from $\mu$. 
The size $p$ of the grid $\XX_p$ is maintained comparable in every considered dimensions. Results are averaged over $10$ experiments for several samples $(Y_1,\cdots,Y_n)$, and standard deviation is indicated around each MSE curve.
Overall, these numerical experiments reveal a potentially faster convergence for approaches based on stochastic algorithms when the number of  observations grows. Moreover, our continuous approach slightly outperforms the semi-discrete one in term of computational performances.

\begin{figure}[htbp]
\centering
{\subfigure[$d=2$, $p=20^2$.]{\includegraphics[width=0.32 \textwidth,height=0.32\textwidth]{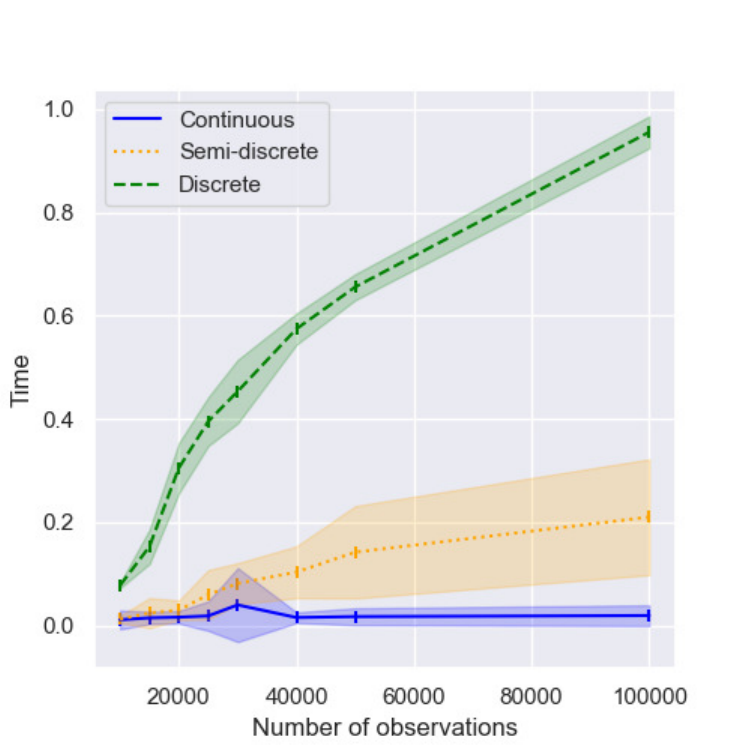}}}
{\subfigure[$d=3$, $p=10^3$.]{\includegraphics[width=0.32 \textwidth,height=0.32\textwidth]{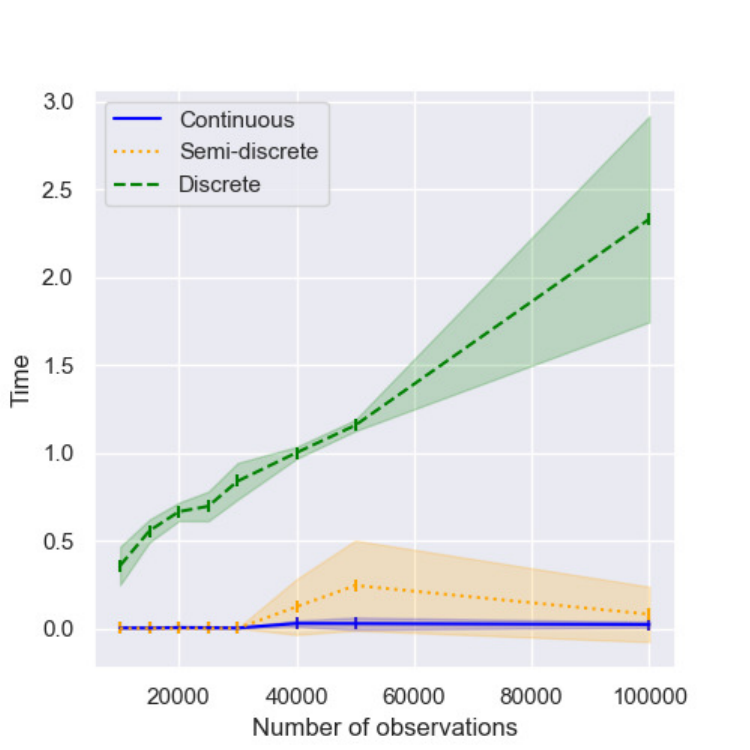}}}
{\subfigure[$d=4$, $p=6^4$.]{\includegraphics[width=0.32 \textwidth,height=0.32\textwidth]{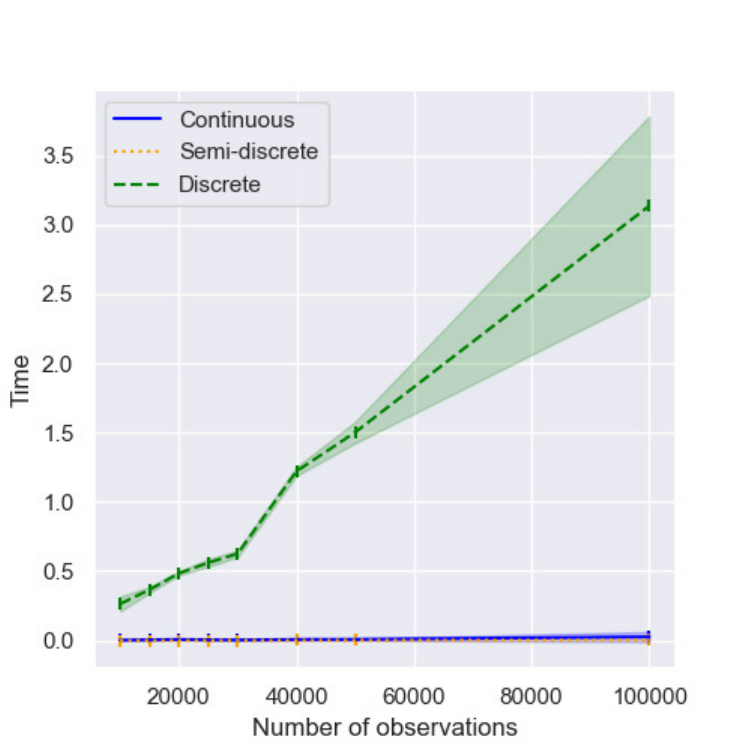}}}

\caption{  Overall time, in seconds, until convergence of the MSE below $10^{-2}$ for different solvers for EOT. \label{fig:TimeVsOthers}}
\end{figure}

 \subsection{Numerical experiments in dimension $d=1$}
The univariate setting allows us an explicit knowledge of the ground truth $Q$. 
There, we study our algorithm with either the standard quadratic cost  in $\R^d$ given by $c(x,y) = \frac{1}{2} \|x-y\|^2$ or the quadratic cost on the flat torus $\TT^d = \R^d/\ZZ^d$ that is
\begin{equation}\label{quad_cost_torus}
c(x,y) = \frac{1}{2} d_{\TT^d}(x,y), \quad \mbox{with}  \quad d_{\TT^d}(x,y) =  \min_{\lambda \in \ZZ^d} \|x-y + \lambda \|.
\end{equation}
The choice of the quadratic cost on the torus is motivated by the discussion in the supplementary material \Cref{sec:periodic} on sufficient conditions related to the  summability of the Fourier coefficients of an optimal dual potential.

For the learning rate $\gamma_n = \gamma n^{-c}$, we took $\gamma = \ee$ and $c = 3/4$. 
The sequence of weights $w = (w_\lambda)_{\lambda \in \Lambda}$ is chosen as $w_{\lambda} = |\lambda|^{-2}$ for $\lambda \in \ZZ \backslash\{0\}$. 
Taking a larger exposant than $2$ results in smoother estimators of the optimal dual potential $u_\ee$. 
For various values of $\ee \in [0.005,0.5]$, we consider a $\mbox{beta}(a,b)$ distribution $\nu$ on $\YY =[0,1]$ with parameters $a=5$ and $b=5$. 
The optimal dual potential $u_0$ and quantile function $Q_0$ are straightforward to compute when $d=1$ for the standard quadratic cost. For a sample of size $n=10^5$, $\widehat{u}_{\varepsilon}^{\, n}$  and $\widehat{Q}_{\varepsilon}^n$ are displayed in  \Cref{fig:1d_beta_distribution}
using either the standard quadratic cost or the  quadratic cost of the torus.
One can observe that the choice of the cost yields a different regularization effect. 
Choosing $\ee = 0.005$ yields values of $\widehat{u}_{\varepsilon}^{\, n}$  and $\widehat{Q}_{\varepsilon}^n$ that are very close to $u_0$ and $Q_0$ respectively.

\begin{figure}[htbp]
\centering
{\subfigure[\mbox{Standard quadratic cost}]{\includegraphics[width=0.45 \textwidth,height=0.3\textwidth]{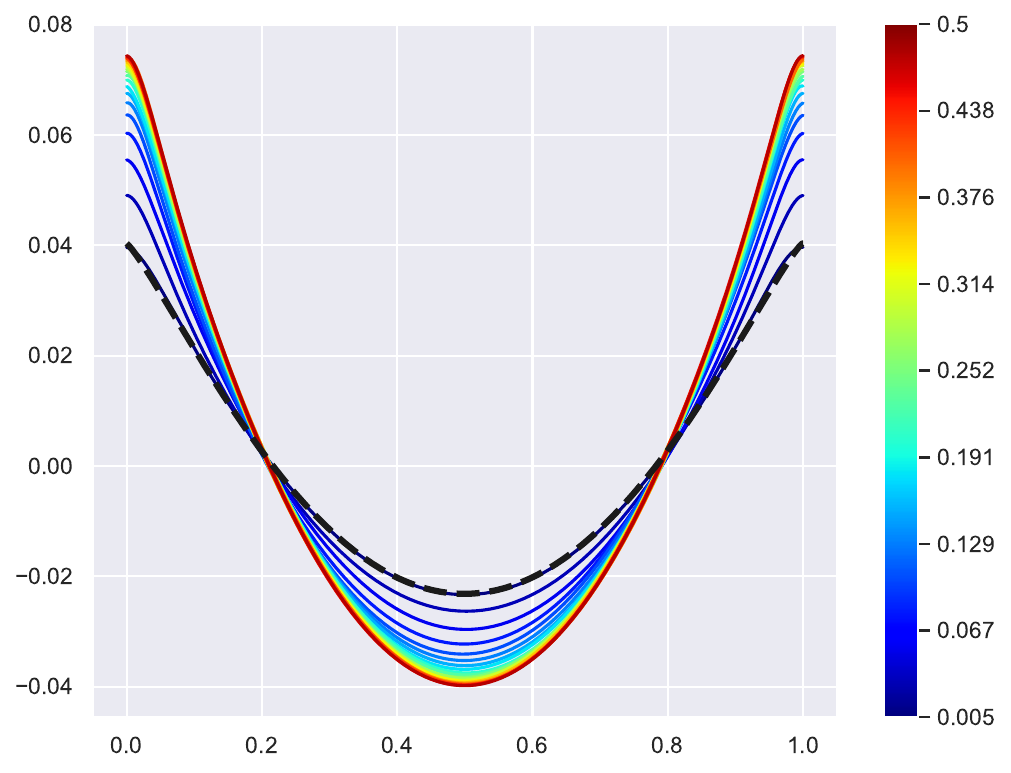}}}
{\subfigure[\mbox{Quadratic cost on the torus}]{\includegraphics[width=0.45 \textwidth,height=0.3\textwidth]{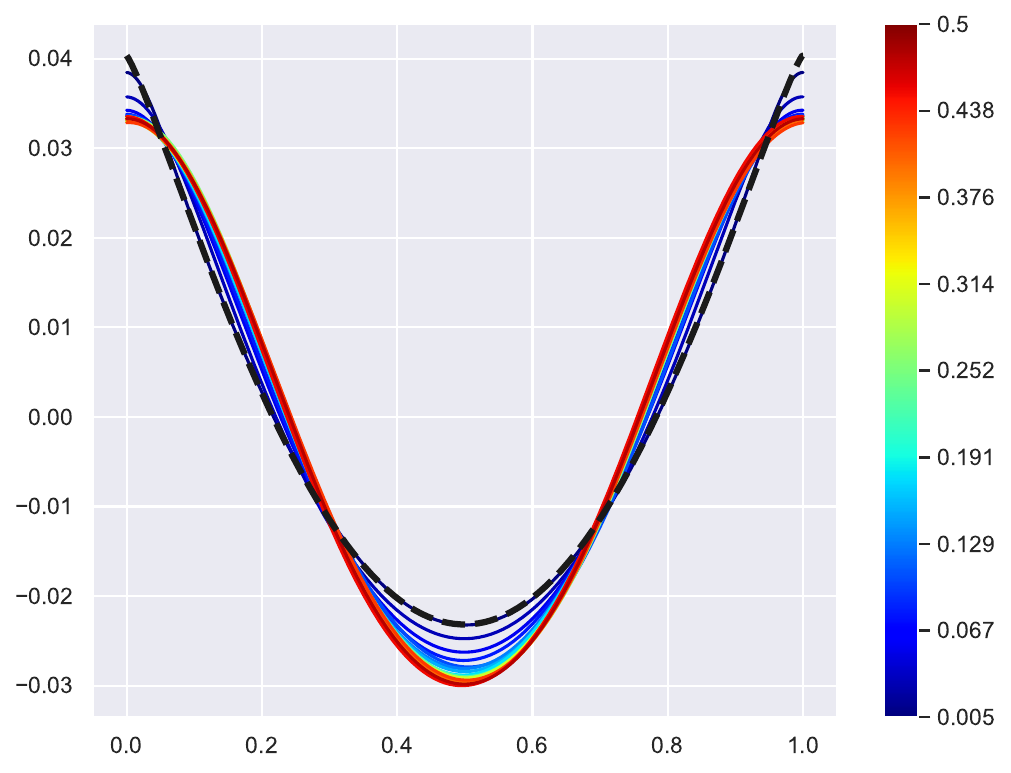}}}

{\subfigure[\mbox{Standard quadratic cost}]{\includegraphics[width=0.45 \textwidth,height=0.3\textwidth]{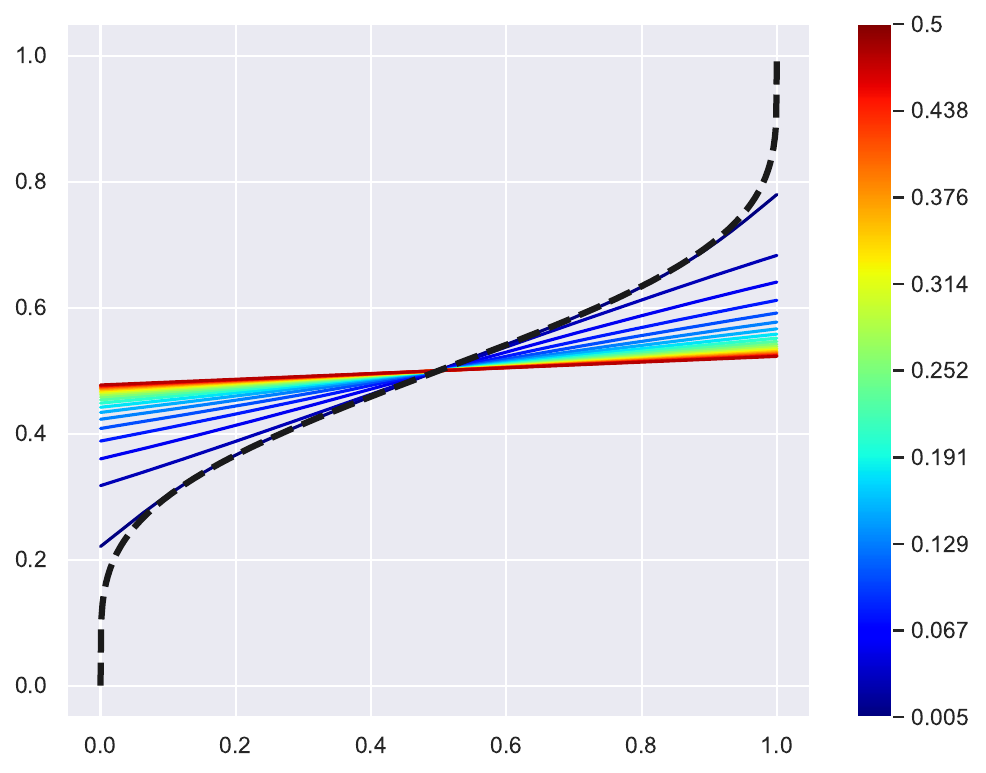}}}
{\subfigure[\mbox{Quadratic cost on the torus}]{\includegraphics[width=0.45 \textwidth,height=0.3\textwidth]{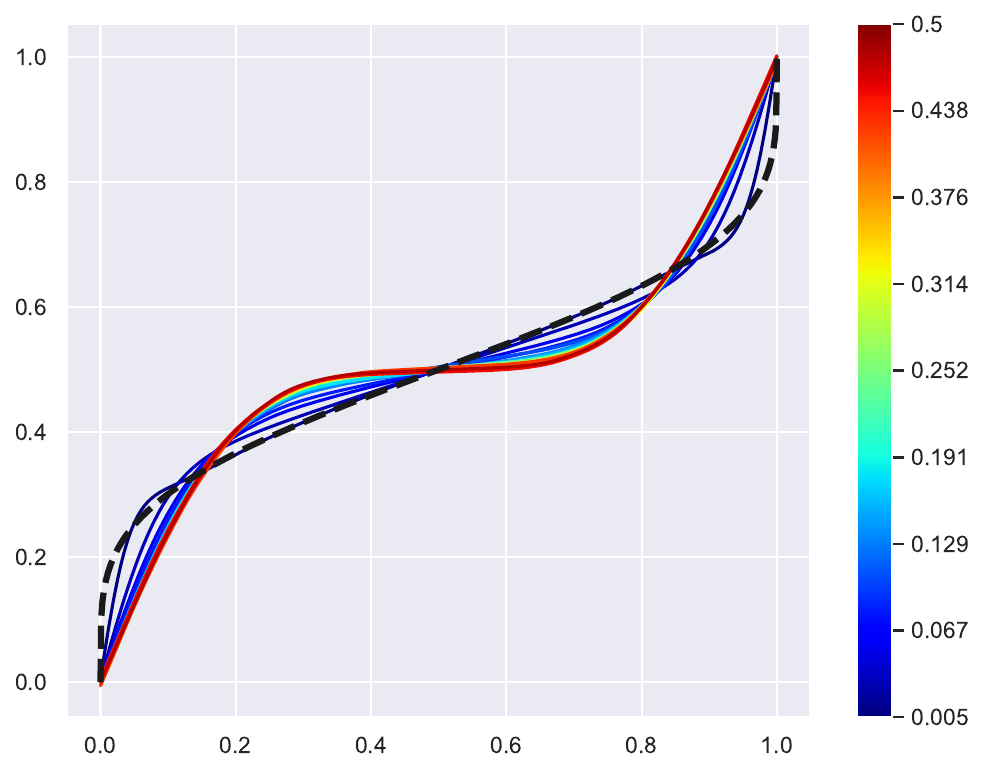}}}

\caption{  Estimators $\widehat{u}_{\varepsilon}^{\, n}$ and $\widehat{Q}_{\varepsilon}^n$ on the first and second lines respectively.
The black and dashed curves are either the un-regularized optimal dual potential $u_0$ or the un-regularized quantile function $Q_0$ of the beta distribution for the standard quadratic cost.
\label{fig:1d_beta_distribution}}
\end{figure}

From now on, let us consider a sample $(Y_1^\ast,\ldots,Y_J^\ast)$ of small size $J=100$ of the same $\mbox{beta}(a,b)$ distribution. 
We illustrate the potential benefits of using regularized OT to obtain a smoother estimator than the usual empirical quantile function $\widehat{Q}_0^J$  defined as the generalized inverse of the empirical cumulative distribution function
\begin{equation*}
\widehat{F}_0^J(x) = \frac{1}{J} \sum_{j=1}^{J} \1_{ \{Y_j^\ast\leq x\}}.
\vspace{-0.2cm}
\end{equation*}
To this end, for various values of $\ee \in [0.005,0.5]$, we compute the two estimators $\widehat{Q}_{\varepsilon}^{n,J}$ from \eqref{eq:hatQ} and $\widetilde{Q}_{\varepsilon}^{m,J}$ from \eqref{eq:hattildeQ} with sequences of $n=m=10^5$ random variables sampled from the discrete measure $\widehat{\nu}_J^\ast$ or the  uniform measure on $[0,1]$ respectively.
In \Cref{fig:1d_beta_discrete_distribution}, we display in logarithmic scale the point-wise mean-squared errors 
\begin{equation*}
\mbox{MSE}(\widehat{Q}_{\varepsilon}^{n,J}(x)) = \EE \left[ |\widehat{Q}_{\varepsilon}^{n,J}(x) -Q_0(x)|^2 \right] \quad \mbox{and} \quad \mbox{MSE}(\widetilde{Q}_{\varepsilon}^{m,J}(x)) = \EE \left[ |\widetilde{Q}_{\varepsilon}^{m,J}(x) - Q_0(x)|^2 \right], 
\end{equation*}
where the above expectations are approximated using Monte-Carlo experiments from 100 repetitions of the above described procedure. The MSE of these regularized estimators is then compared to the MSE of the usual empirical quantile function $\widehat{Q}_0^J$ defined accordingly.
For all values of $\ee$, it can be seen, from  \Cref{fig:1d_beta_discrete_distribution}, that regularization always improves the estimation of $Q_0(x)$ by $ \widehat{Q}_0^J(x)$ around the median location $x=0.5$. For the smallest values of $\ee$, regularization  also improves the estimation of $Q_0(x)$ for $x \in [0.1,0.9]$, and the best results are obtained with the stochastic algorithm based on the FFT.

 \begin{figure}[htbp]
\centering

{\subfigure[ $\log \mbox{MSE}(\widehat{Q}_{\varepsilon}^{n,J}(x)) $]{\includegraphics[width=0.45 \textwidth,height=0.3\textwidth]{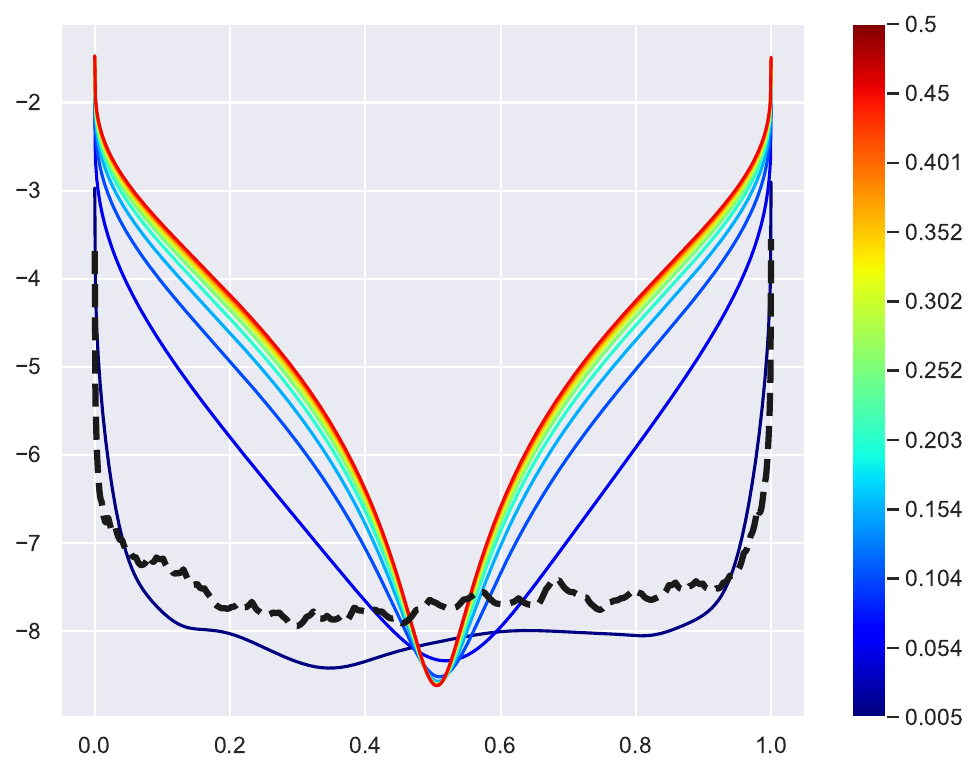}}}
{\subfigure[ $\log \mbox{MSE}(\widetilde{Q}_{\varepsilon}^{m,J}(x)) $]{\includegraphics[width=0.45 \textwidth,height=0.3\textwidth]{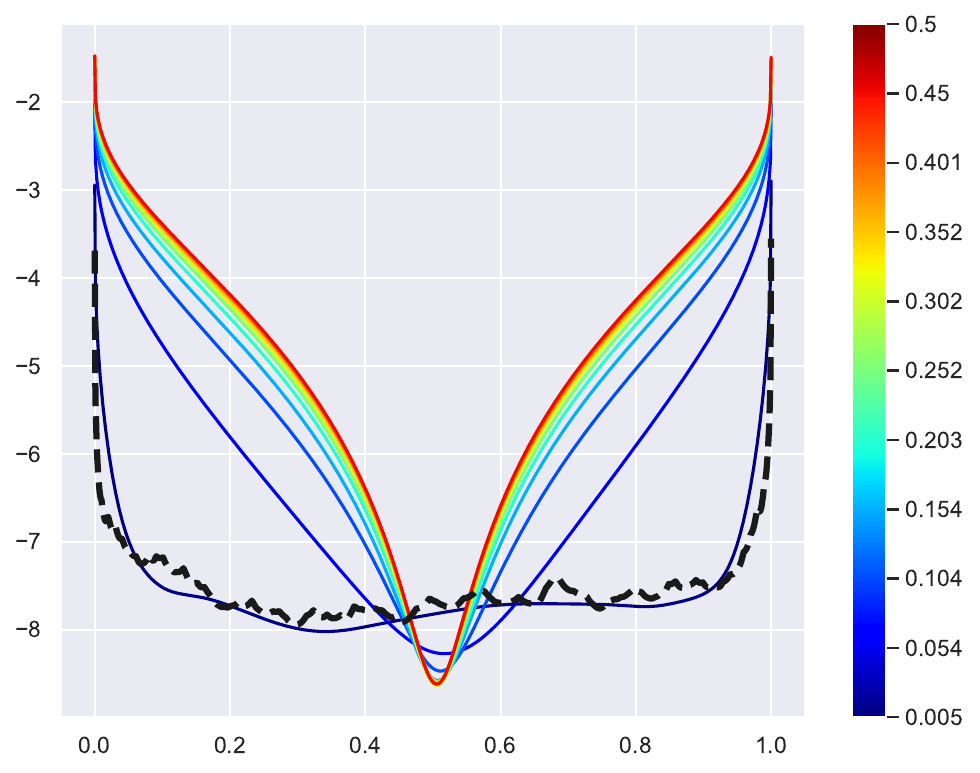}}}


\caption{Point-wise error of the regularized  estimators $\widehat{Q}_{\varepsilon}^{n,J}$ and  $\widetilde{Q}_{\varepsilon}^{m,J}$ for various values of $\ee \in [0.005,0.5]$. The black and dashed curve is the point-wise error of the un-regularized empirical quantile function $\widehat{Q}_0^J$.  \label{fig:1d_beta_discrete_distribution}}
\end{figure}

\subsection{Numerical experiments in dimension $d=2$}

As argued in \cite{Hallin-AOS_2021}, taking as reference the spherical uniform distribution $\mu_{S}$ on the unit ball $\Bb^{d}$ induces different properties for MK quantiles.
Thanks to a change in polar coordinates, one can parametrize on $\Bb^{d}$ instead of $[0,1]^d$.
By definition, a random vector $X$ with spherical uniform distribution is given by $X = R \Phi$ where $R$ and $\Phi$ are independent and drawn uniformly from [0,1] and the unit hypersphere $\Sc^{d-1}$, respectively. In dimension $d=2$, $X$ writes in polar coordinates as
\begin{equation*}
X = \begin{pmatrix}
 R \cos(2\pi \Psi) \\
 R \sin(2\pi \Psi)
 \end{pmatrix} \in \Bb^{2}, 
\end{equation*}
where $(R,\Psi)$ is uniform on $[0,1]^2$. Then, for a function  $u \in L^{1}(\Bb^{d},\mu_{S})$, its parametrization in polar coordinates is given, for all $(r,\psi) \in [0,1] \times [0,1]$, by
 \begin{equation*}
 \overline{u}(r,\psi) = u \begin{pmatrix}
 r \cos(2\pi \psi) \\
 r \sin(2\pi  \psi)
 \end{pmatrix}.
 \end{equation*}
 Hence, by definition of $\mu_S$, the function $\overline{u}$ is an element of $L^{1}\left([0,1]^2, \mu \right)$ where $\mu$ is the uniform distribution on $\XX = [0,1]^2$. Consequently, thanks to this re-parametrization, we propose to solve in the Fourier domain, for $\Lambda = \ZZ^{2} \backslash \{0\}$, the following regularized OT problem 
\begin{equation}
\theta^{\ee} = \argminE_{\theta \in \ell_{1}(\Lambda) }  \overline{H}_\ee(\theta) \hspace{1cm} \mbox{with}  \hspace{1cm}  \overline{H}_\ee(\theta) = \mathbb{E} \left[ \, \overline{h}_\ee(\theta,Y) \right], \label{eq:OTregpolar}
\end{equation}
where $Y =(Y_1,Y_2) \in \R^2$ is a random vector with distribution $\nu$, and $\overline{h}_\ee$ is given by
\begin{align*}
\overline{h}_\ee(\theta,y) = \ee \log \big( \int_{\XX} & \exp \left( \frac{\SL \theta_\lambda \phi_\lambda(r,\psi) -c_y( r, \psi)}{\ee} \right) d\mu(r,\psi) \big) + \ee,
\end{align*}
with $\phi_\lambda(r,\psi)=e^{2\pi i (\lambda_1 r + \lambda_2 \psi)}$ for $\lambda = (\lambda_1,\lambda_2) \in \ZZ^2$, and $c_y$ refers to the quadratic cost,
\begin{equation*}
c_y(r,\psi) = \frac{1}{2 } \left((r \cos(2 \pi \psi) -y_1)^2 +(r \sin(2 \pi \psi) -y_2)^2\right).
\end{equation*}
In order to solve \eqref{eq:OTregpolar}, we adapt the stochastic algorithm \eqref{defthetan} which yields, after $n$ iterations, the sequence $\overline{\theta}_{n}$ and the estimator, in polar coordinates,
\begin{equation}\label{overline_u}
\overline{u}_{\varepsilon}^{\, n}(r,\psi) = \SL \overline{\theta}_{n,\lambda} \phi_\lambda(r,\psi).
\end{equation} 
In practice, we discretize $[0,1]^2$ by choosing equi-spaced radius points $0 \leq r_1 < \ldots < r_{p_1} \leq 1$ and angles $0 \leq \psi_{1} < \ldots \psi_{p_2} < 1$ which results in taking a grid of $p=p_1  p_2$ points
 \begin{equation*}
 \XX_{p} = \left\{ (r_{\ell_1},\psi_{\ell_2})_{(\ell_1,\ell_2) \in \{1,p_1 \} \times \{1, p_2 \}} \right\} \subset [0,1]^2.
 \end{equation*}
Finally, the stochastic algorithm \eqref{defthetan} is implemented on this polar grid using the weight sequence $w_{\lambda} = 1$ for all $\lambda \in \Lambda_{p}$ that is with $\alpha = 0$. Of course, \Cref{hyp:Winv} is always verified if $(w_\lambda) \equiv (1,1, \cdots, 1, 0,0,\cdots)$. This is motivated by the fact that choosing $w_\lambda = \| \lambda \|^{-\alpha}$ with $\alpha \geq 1$ would impose periodic constraints on the dual potentials $\bar{u}(r,\psi)$ along the radius coordinate.
However, as shown by the following numerical experiments, an optimal dual potential typically does not satisfy the polar periodic conditions $\bar{u}(0,\psi) = \bar{u}(1,\psi)$ for all $\psi \in [0,1]$.
The counterpart of $\widehat{Q}_\ee^n$ in \eqref{eq:hatQ} directly follows from \eqref{overline_u}, that is 
\begin{equation}
\overline{Q}^n_{\, \varepsilon}(x)   = \sum_{j=1}^{n} \overline{F}_{j}(x) Y_{j}
 \hspace{1cm} \mbox{where}  \hspace{1cm}  
 \overline{F}_{j}(x) = \frac{ \exp \Bigl( \dfrac{ (\overline{u}_{ \varepsilon}^{\,n})^{c,\varepsilon}(Y_j) - c(x,Y_j) }{\varepsilon} \Bigr) }{\sum_{\ell = 1}^{n} \exp \Bigl( \dfrac{ (\overline{u}_{\varepsilon}^{\,n})^{c,\varepsilon}(Y_\ell)  - c(x,Y_\ell) }{\varepsilon} \Bigr)},  \label{eq:lineQ}
\end{equation} 
where the integral in the computation $(\overline{u}_{ \varepsilon}^{\,n})^{c,\varepsilon}(\cdot)$ is approximated with the polar grid $\XX_p$.  In what follows, we report numerical experiments for the banana-shaped distribution $\nu$ considered in  \cite{chernozhukov2015mongekantorovich}. It corresponds to sampling $Y$ as the random vector
 \begin{equation*}
 Y =  \begin{pmatrix}
 U + R \cos(2\pi \Phi) \\
 U^2 + R \sin(2\pi  \Phi)
 \end{pmatrix},
 \end{equation*}
where $U$ is uniform on $[-1,1]$, $\Phi$ is uniform on $[0,1]$, $R = 0.2 Z (1-(1-|U|)/2$ with $Z$ uniform on $[0,1]$, and $U,\Phi$ and $Z$ independent. In these simulations, the random variable $Y$ is also centered and scaled so that it takes its values within the subset $[-0.6,0.6] \times [-0.4,0.5] \subset [0,1]^2$.

We first consider a sample $Y_1^\ast,\ldots,Y_J^\ast$ of size $J=10^3$ that is held  fixed and displayed in \Cref{fig:2d:comparisonp1p2}. Then, we draw $n = 10^5$ random variables $Y_1,\ldots,Y_n$ from the associated discrete distribution $\widehat{\nu}_{J}^*$, and we run the stochastic algorithm  \eqref{defthetan} with different sizes $(p_1,p_2) = (10,100)$ and  $(p_1,p_2) = (100,1000)$ for the discretization $\XX_{p}$.
Note that the cost of each iteration of the stochastic algorithm is of order $\bigO\left( p \log(p)\right)$ for $p=p_1p_2$. Therefore, the choice of discretization of the polar coordinates greatly influences the computational cost of the algorithm.
In \Cref{fig:2d:comparisonp1p2}, we display the resulting regularized dual potentials $\widehat{u}_{\varepsilon}^{\, n}$ in cartesian coordinates for $\ee = 0.005$. 
We also draw the resulting MK contour quantiles of level $r = 0.5$ for each choice of discretization. It can be seen that the resulting  MK contour quantiles are very similar with a much lowest computational cost for the discretization of size  $(p_1,p_2) = (10,100)$. 
\begin{figure}[htbp]
\centering
{\subfigure[Quantile contour for $r=0.5$ ; $(p_1,p_2) = (10,100)$]{\includegraphics[width=0.3 \textwidth,height=0.3\textwidth]{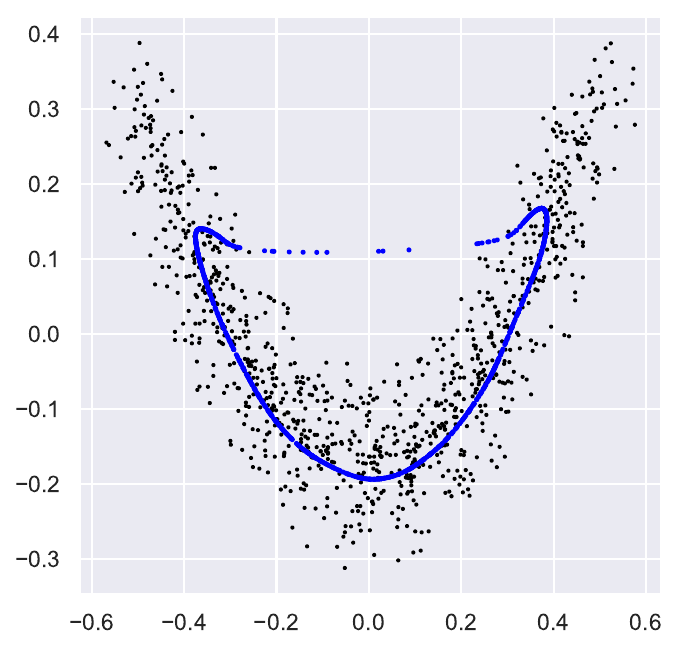}}}
{\subfigure[$\widehat{u}_{\varepsilon}^{\, n}(r \cos(2 \pi \psi), r \sin(2 \pi \psi))$ ; $(p_1,p_2) = (10,100)$]{\includegraphics[width=0.3 \textwidth,height=0.3\textwidth]{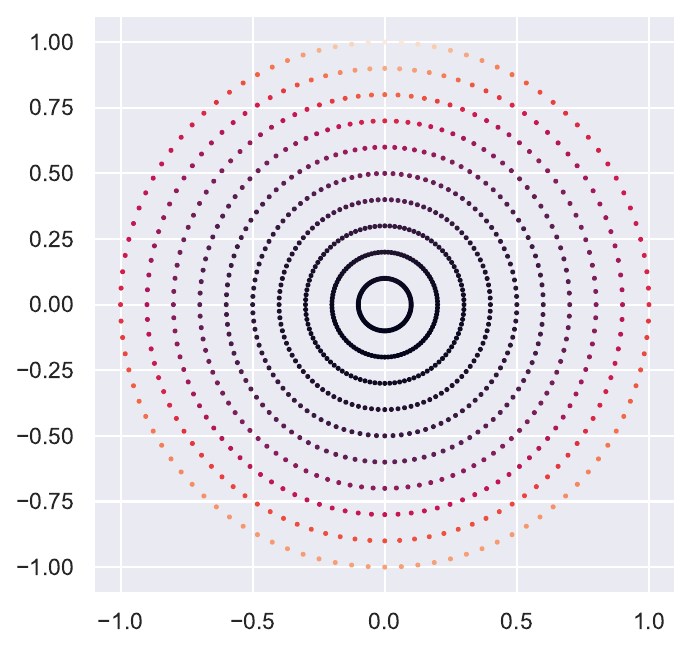}}}
{\subfigure[ $\overline{u}_{\varepsilon}^{\, n}(r,\psi)$  - $(p_1,p_2) = (10,100)$]{\includegraphics[width=0.35 \textwidth,height=0.12\textwidth]{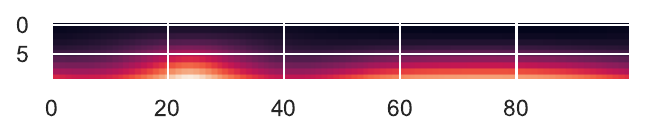}}}

{\subfigure[Quantile contour for $r=0.5$ ; $(p_1,p_2) = (100,1000)$]{\includegraphics[width=0.3 \textwidth,height=0.3\textwidth]{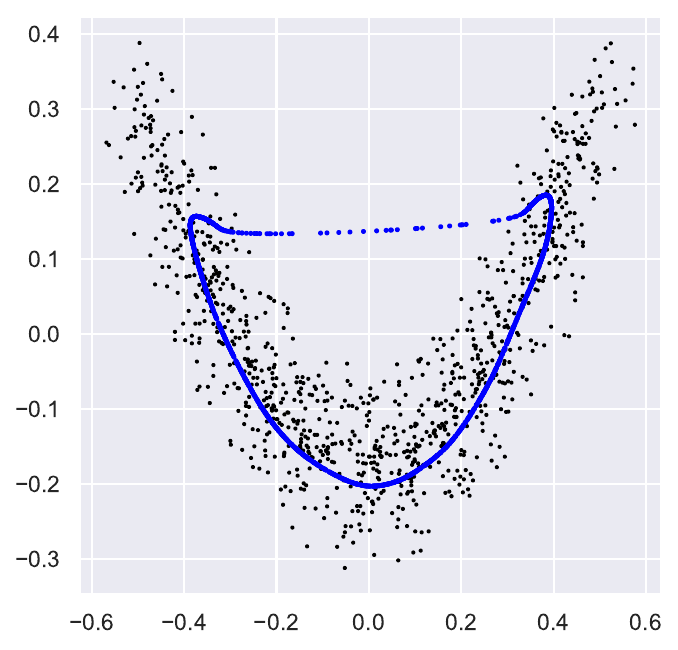}}}
{\subfigure[$\widehat{u}_{\varepsilon}^{\, n}(r \cos(2 \pi \psi), r \sin(2 \pi \psi))$ ; $(p_1,p_2) = (100,1000)$]{\includegraphics[width=0.3 \textwidth,height=0.3\textwidth]{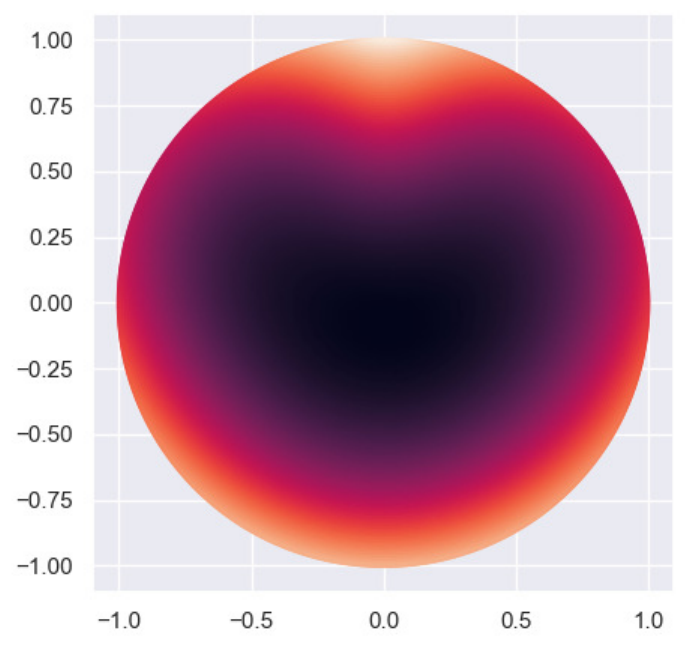}}}
{\subfigure[ $\overline{u}_{\varepsilon}^{\, n}(r,\psi)$ - $(p_1,p_2) = (100,1000)$]{\includegraphics[width=0.35 \textwidth,height=0.12\textwidth]{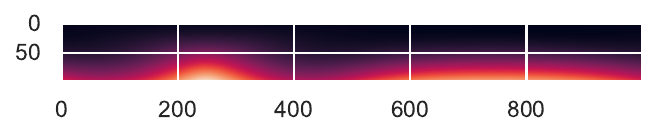}}}

\caption{The blue curves are  regularized MK quantile contours  at level $r=0.5$ for $\ee = 0.005$ from the discrete measure $\hat{\nu}_{J}$ (displayed with black points) using  two different  discretizations $(p_1,p_2) = (10,100)$ (first row) and $(p_1,p_2) = (100,1000)$ (second row). The second (resp. thrid) columns represent the values of the regularized  dual potentials   in cartesian (resp. polar) coordinates for each choice of discretization.\label{fig:2d:comparisonp1p2}}
\end{figure}

\Cref{fig:2d:comparaison_algos} contains a comparison of the convergence between our FFT-based scheme \eqref{eq:hatQ} and \eqref{eq:hattildeQ}, based on the stochastic gradient descent from \cite{bercu2020asymptotic}, that we refer to as the regularized SGD. The reference distribution is taken to be the spherical uniform. 
Also, we compare these regularized approaches (using $\ee = 0.005$) with classical un-regularized ones. 
To this end, we implement a subgradient descent for un-regularized OT, namely the same Robbins-Monro scheme as \eqref{Semi-dualdisc0} with $\ee=0$, that is a semi-discrete scheme advocated in \cite{chernozhukov2015mongekantorovich}[Section 4]. Finally, we use the OT network simplex solver from the Python library \cite{POT} to compute the solution of un-regularized OT between two empirical discrete distributions with supports  $\XX_p = \{x_1,\ldots,x_p\}$  and $(Y_1^\ast,\ldots,Y_J^\ast)$. We first consider a sample $Y_1^\ast,\ldots,Y_J^\ast$ of size $J=10^4$ that is held fixed. For our FFT approach, we let $p_1 = 20$, $p_2=500$, so that $p = p_1p_2 = 10^4$. 
For the three iterative schemes, the number of iterations varies between $10^4$, $10^5$ and $10^6$. 
This corresponds, for our FFT approach, to a stochastic algorithm with $1,10$ and $100$ epochs, whereas the other approaches sample from the reference distribution $\mu_S$. 
The first line of \Cref{fig:2d:comparaison_algos} contains the corresponding quantile contours of order $r=0.5$ for each of these methods, for several number of iterations. The colored dots are obtained by transporting points of radius $r=0.5$, while the lines between them are visual artefacts. 
Unlike regularized estimators, the quantile function estimated from an un-regularized semi-discrete scheme is restricted to take its values in the set of observations $( Y_1^\ast,\ldots,Y_J^\ast )$.
On another hand, with the simplex solver, the obtained empirical quantile map is not a function, rather a collection of points.
The use of stochastic algorithms is more targeted to this task. Still, it is represented here as a benchmark, indicating where the quantile contours shall be. 
Furthermore, the second line of \Cref{fig:2d:comparaison_algos} deals with convergence depending on the number of iterations. 
As customary, we consider a recursive estimation of the values of our objectives, respectively $H_\ee$ for \eqref{Se}, $\widetilde{H}_\ee$ for \eqref{Semi-dualdisc0} and $\widetilde{H}_0$ for \eqref{Semi-dualdisc0} with $\ee=0$. These objectives are recursively estimated along the iterations by gradual averaging in order to account for convergence,  as proposed in \cite{bercu2020asymptotic}. For $J = p=10^4$, the computational cost at each iteration of the two regularized procedures is of the same order. 
It can be seen that the un-regularized SGD has not converged with $10^6$ iterations, whereas the regularized approaches 
\eqref{eq:hatQ} and \eqref{eq:hattildeQ}
have similar convergence behavior. 
Together with the first line of \Cref{fig:2d:comparaison_algos}, these results illustrate that entropically regularized methods converge faster towards a more suitable solution.

\begin{figure}[htbp]
\centering
{\subfigure[Quantile contour at level $r=0.5$ with $n=10^4$ iterations]{\includegraphics[width=0.3 \textwidth,height=0.3\textwidth]{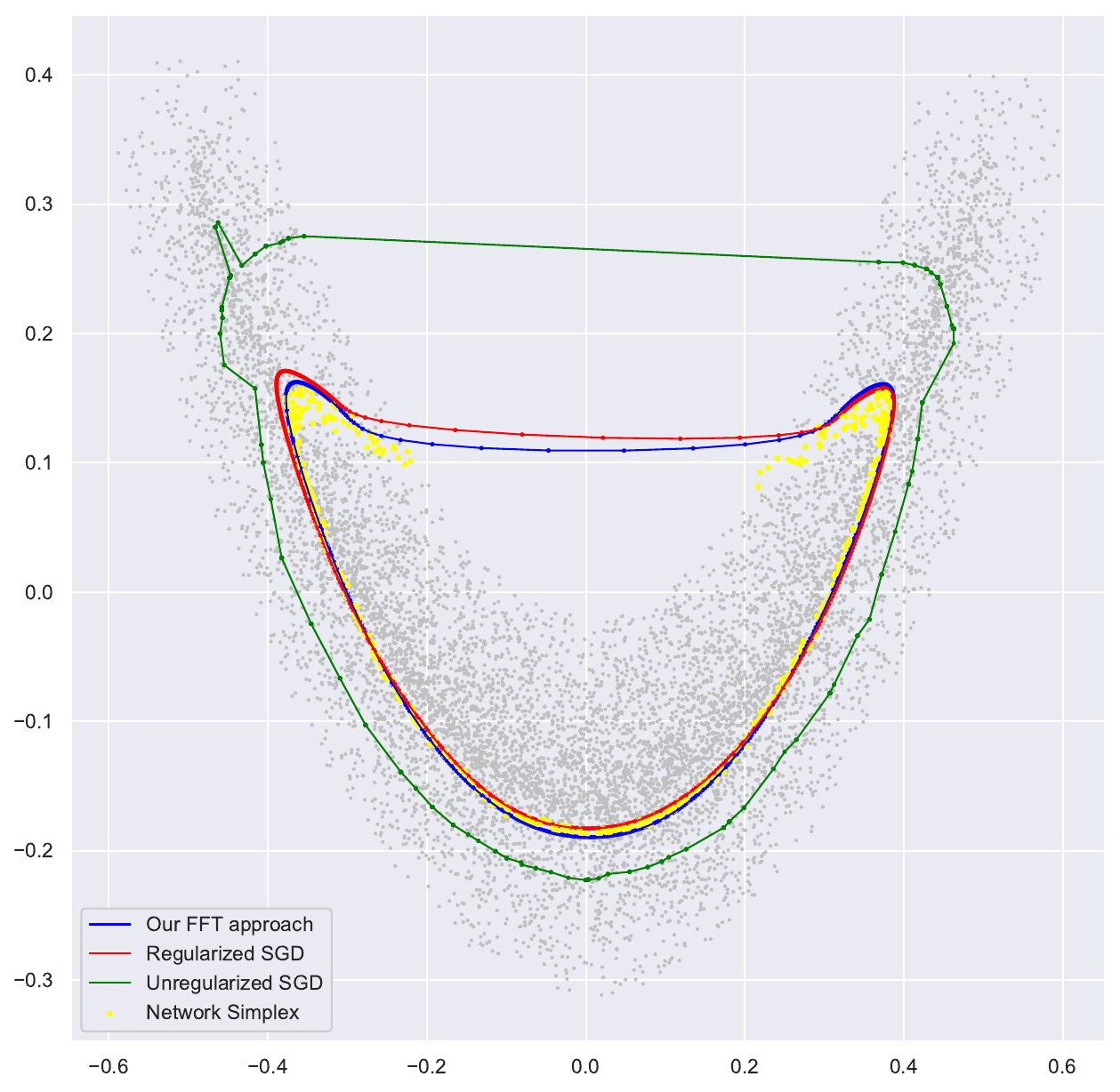}}}
{\subfigure[Quantile contour at level $r=0.5$ with $n=10^5$ iterations]{\includegraphics[width=0.3 \textwidth,height=0.3\textwidth]{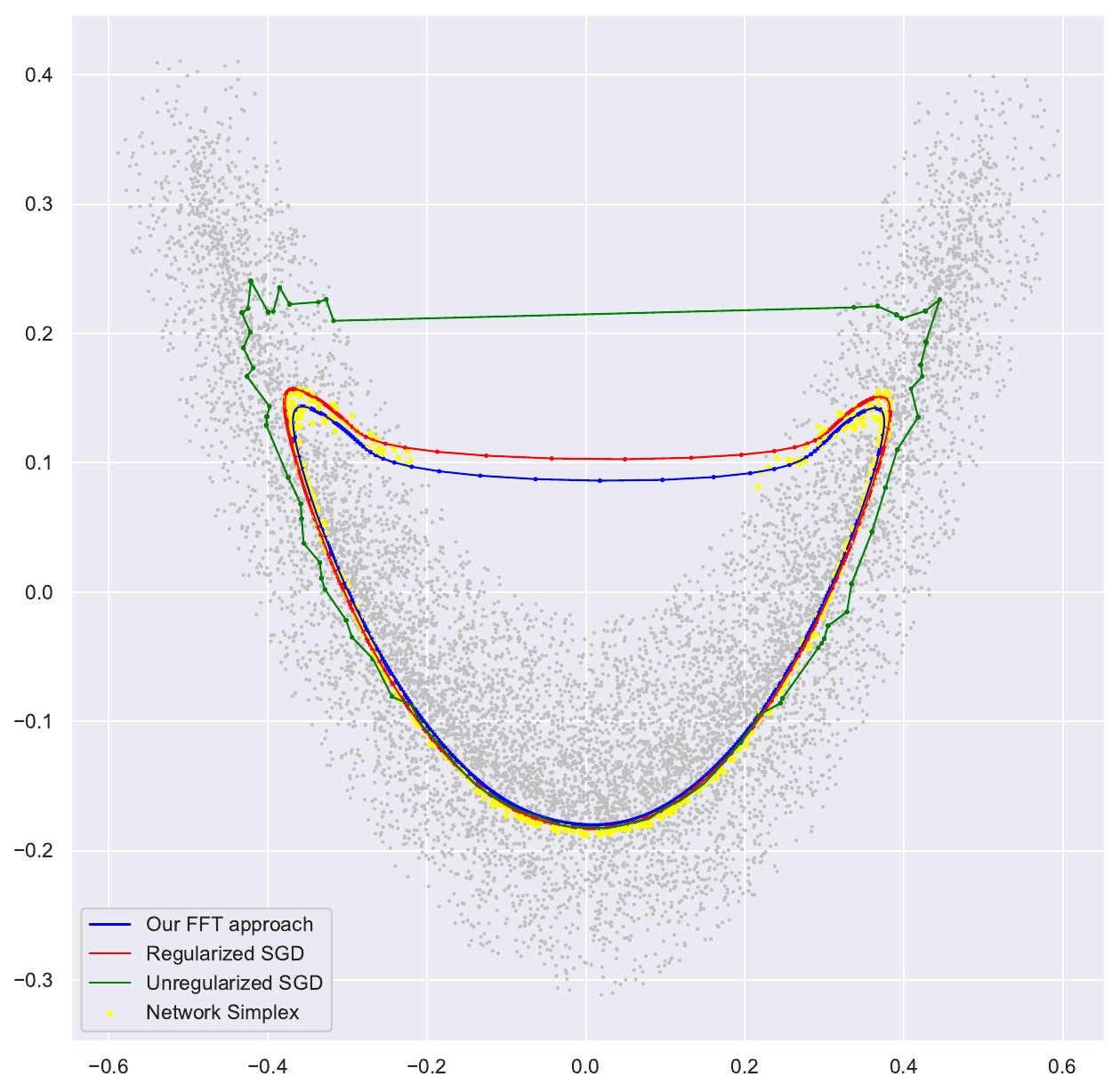}}}
{\subfigure[Quantile contour at level $r=0.5$ with $n=10^6$ iterations]{\includegraphics[width=0.3 \textwidth,height=0.3\textwidth]{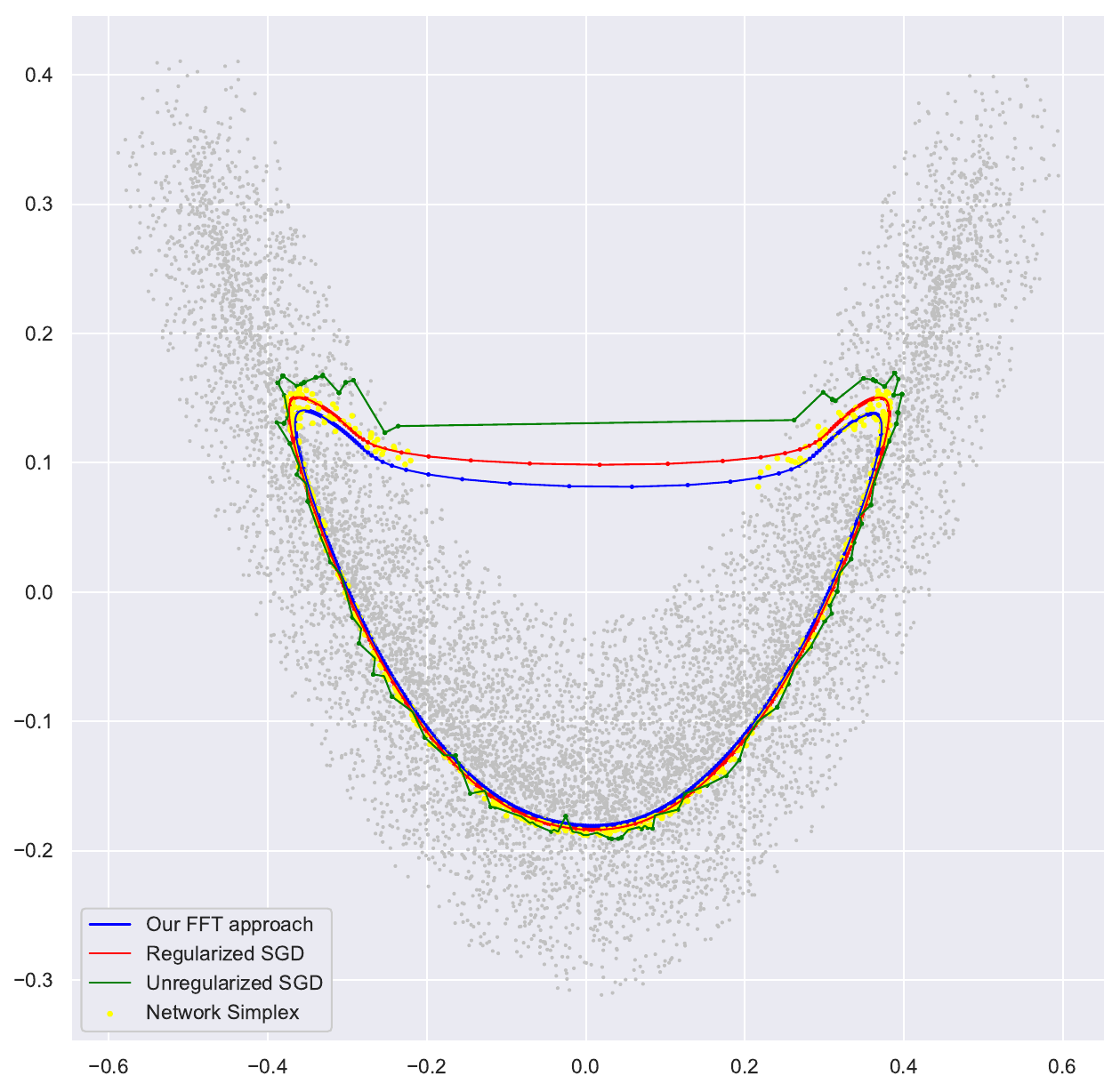}}}

{\subfigure[ Convergence of our FFT-based scheme]{\includegraphics[width=0.3 \textwidth,height=0.18\textwidth]{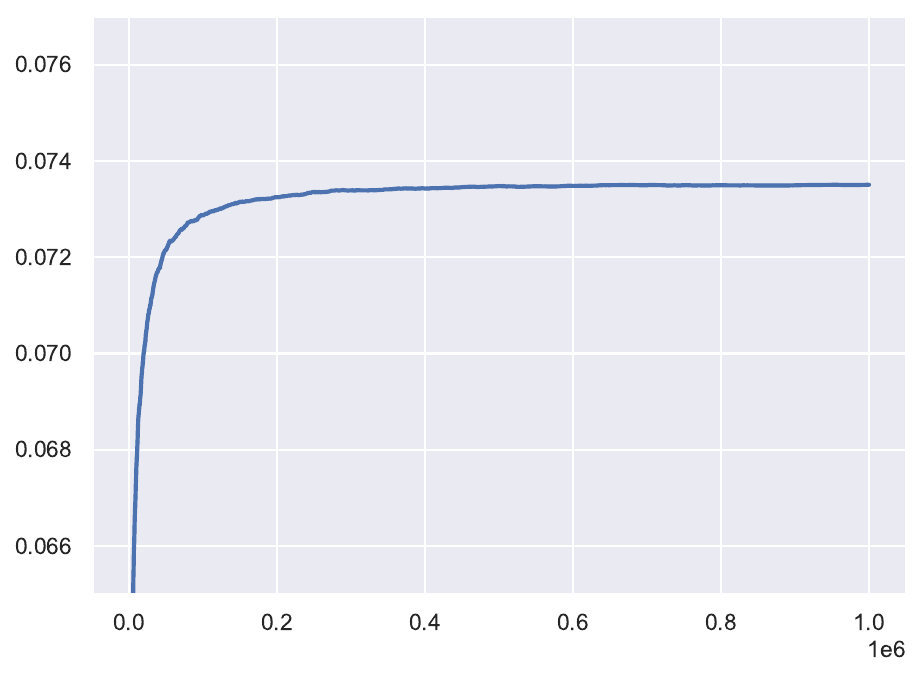}}}
{\subfigure[ Convergence of the regularized SGD]{\includegraphics[width=0.3 \textwidth,height=0.18\textwidth]{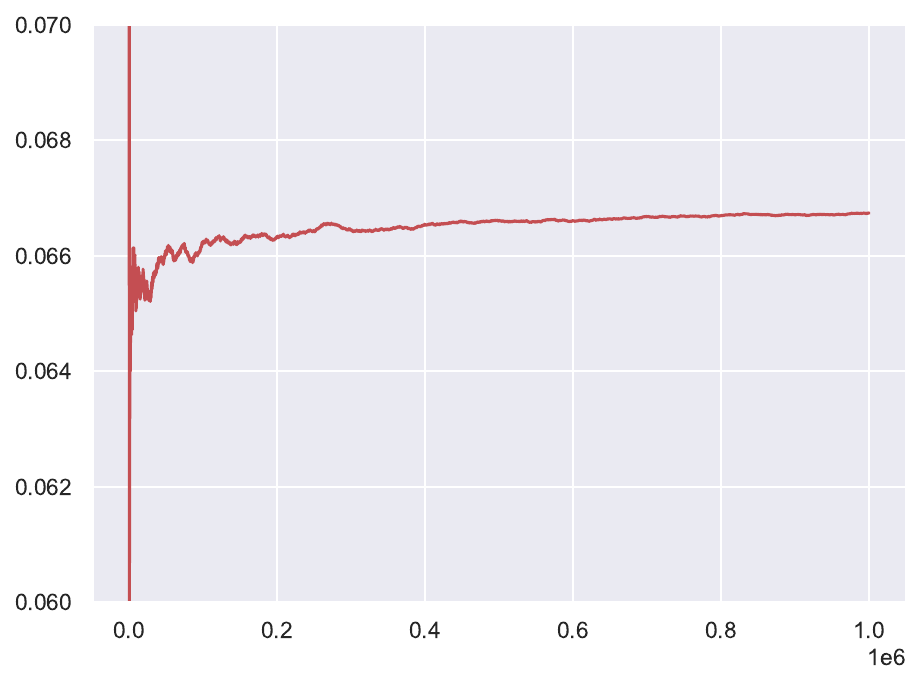}}}
{\subfigure[ Convergence of the unregularized SGD ]{\includegraphics[width=0.3 \textwidth,height=0.18\textwidth]{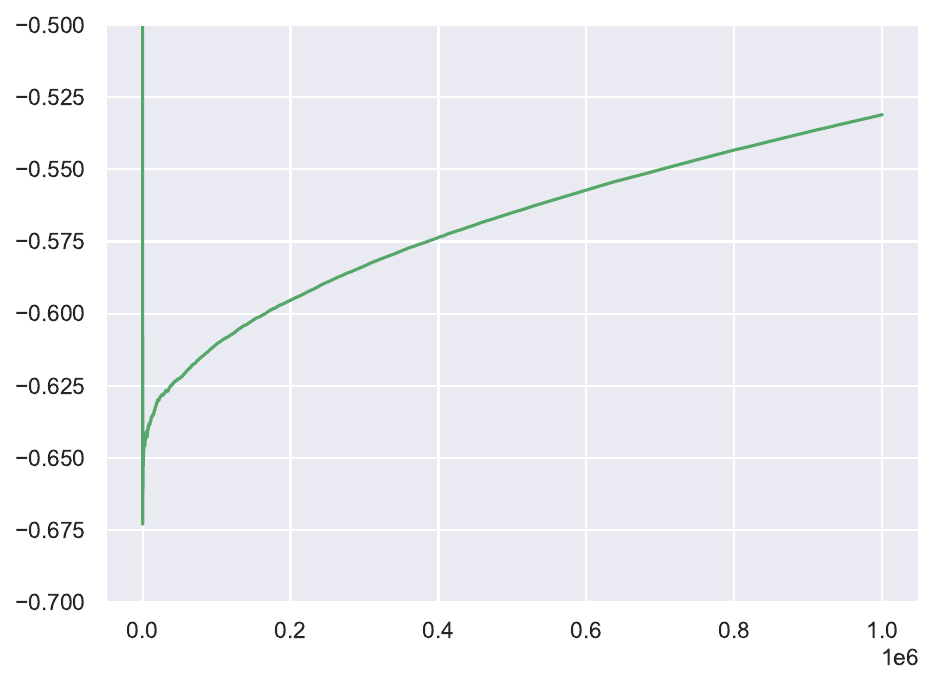}}}

\caption{ Comparison between regularized (with $\ee = 0.005$) and unregularized approaches. }\label{fig:2d:comparaison_algos}

\end{figure}


We finally propose a last numerical experiment to highlight the behavior of EOT when varying the regularization parameter $\epsilon$. We chose to draw $n=10^7$ random variables $Y_1,\ldots,Y_n$ from the banana-shaped distribution, and we ran the stochastic algorithm \eqref{defthetan} for the discretization $(p_1,p_2) = (10,1000)$. Doing so, the obtained sample is very close to the true density, and the various resulting contours only depend on $\ee$. In \Cref{fig:2d:contourMKvariouseps}, we display the resulting regularized MK quantile contours of levels $r \in \{0.2,0.3,\ldots,1\}$ for different values of $\ee \in [0.002,0.5]$. This visualization warns on the choice of the regularization parameter that must be chosen small enough, as usual with EOT. Note that, for $n=10^7$ observations, we have not been able to implement the Sinkhorn algorithm. Moreover, the cost at each iteration   of either regularized or un-regularized SGD being $\bigO(n)$, these algorithms are much slower to converge than our approach.

\begin{figure}[htbp]
\centering
{\subfigure[$\ee = 0.5$]{\includegraphics[width=0.32 \textwidth,height=0.32\textwidth]{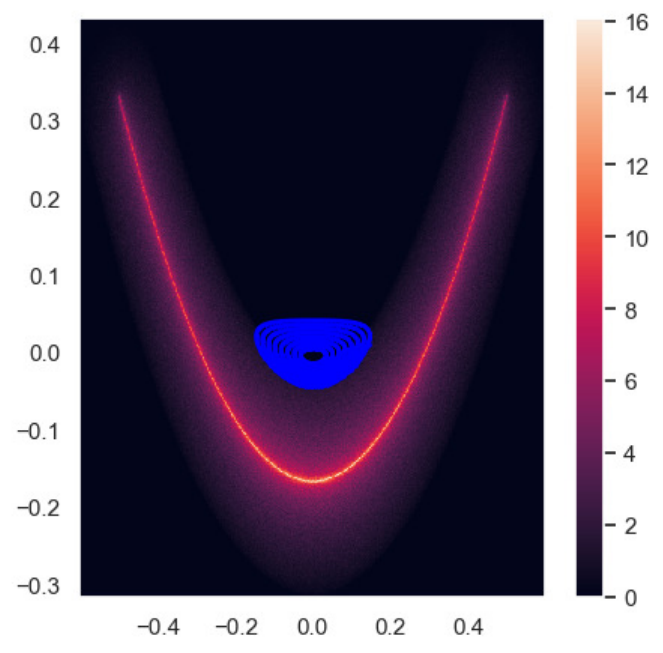}}}
{\subfigure[$\ee = 0.1$]{\includegraphics[width=0.32 \textwidth,height=0.32\textwidth]{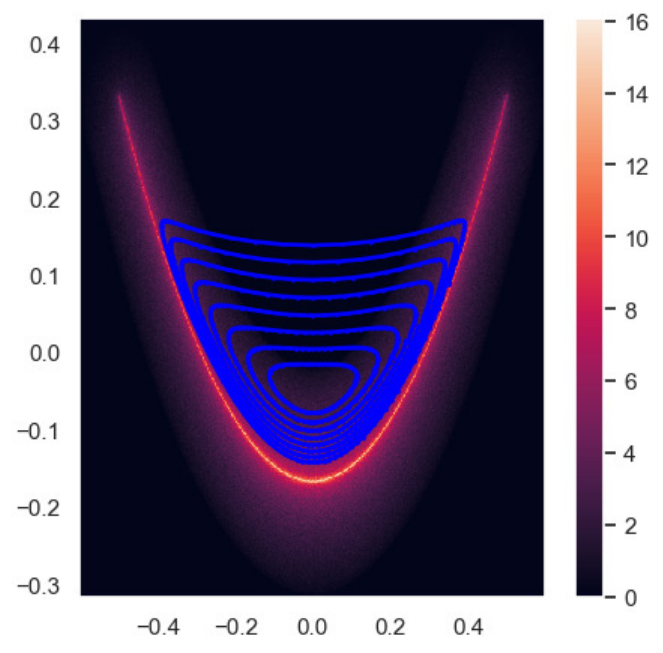}}}
{\subfigure[$\ee = 0.05$]{\includegraphics[width=0.32 \textwidth,height=0.32\textwidth]{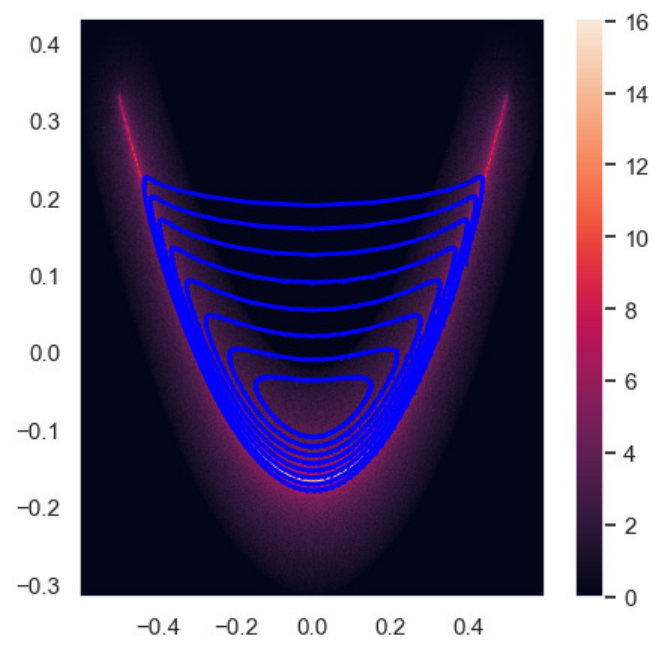}}}

{\subfigure[$\ee = 0.01$]{\includegraphics[width=0.32 \textwidth,height=0.32\textwidth]{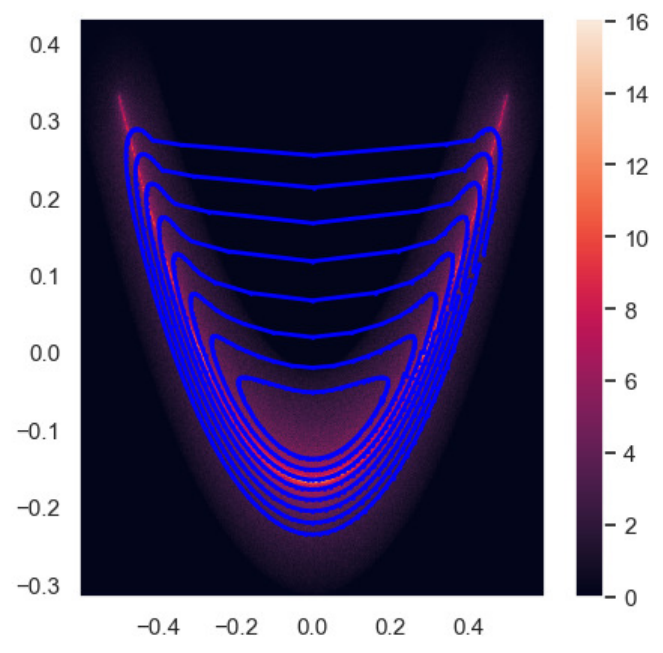}}}
{\subfigure[$\ee = 0.005$]{\includegraphics[width=0.32 \textwidth,height=0.32\textwidth]{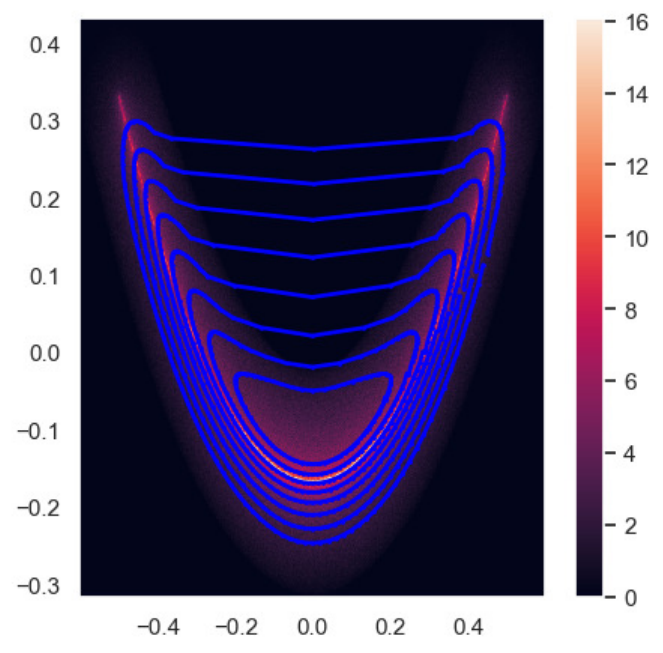}}}
{\subfigure[$\ee = 0.002$]{\includegraphics[width=0.32 \textwidth,height=0.32\textwidth]{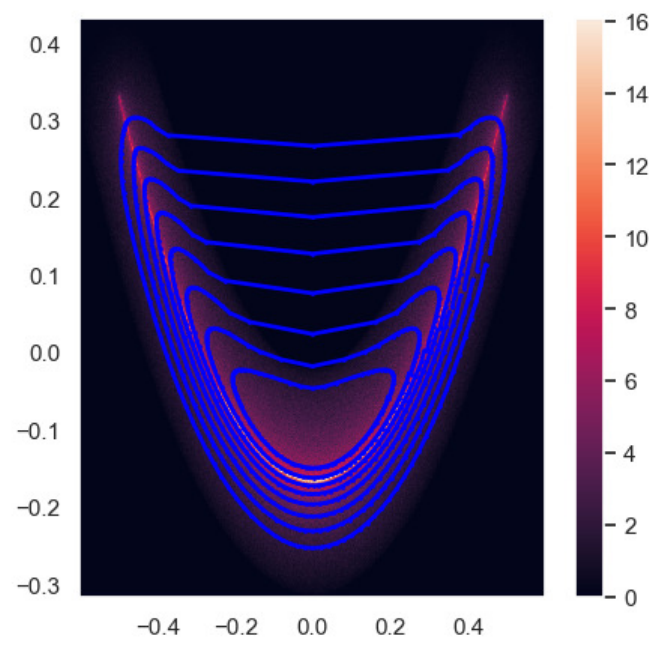}}}

\caption{In all the figures, the image at the background represents a density histogram from the empirical measure $\hat{\nu}_{n} = \frac{1}{n} \sum_{j=1}^{n}  \delta_{Y_j}$ where  $Y_1,\ldots,Y_n$ are sampled from the banana-shaped distribution  with $n=10^7$. The blue curves correspond to  regularized MK quantile contours of levels $r \in \{0.2,0.3,\ldots,1\}$ for $\ee \in [0.002,0.5]$. \label{fig:2d:contourMKvariouseps}}
\end{figure}

\section{Conclusion and perspectives}\label{CCL}

Throughout the paper, we advocated the use of the entropic map for MK quantiles' estimation. Indeed, it is a smooth approximation of an OT map and benefits from the crucial cyclical monotonicity together with computational benefits of EOT.
Our new stochastic algorithm for the continuous OT problem showed potential improvement in terms of numerical complexity, because it is independent, at each iteration, from the size of the observed sample. 
Nonetheless, our implementation of the FFT may become intractable in high dimensions.
Because of the known decay of Fourier coefficients,
one can hope that more sophisticated FFTs could alleviate this, see $e.g.$ \cite{potts2021approximation}, but this is beyond the scope of the present paper. 

Our convergence study based on random iterative schemes extends results from \cite{bercu2020asymptotic} to the continuous setting instead of the semi-discrete setting. Minimax convergence rates of un-regularized estimators of OT maps have been obtained in  recent works  \cite{ghosal2021multivariate,hutter_rigollet}.  Hence, it would be interesting to extend our analysis to the study of the rate of convergence of our regularized estimator. This is an interesting  challenge that is left for future work.

As argued $e.g.$ in \cite{Hallin-AOS_2021}, our assumption of finite second-order moment for $\nu$ may be too restrictive for multivariate quantiles. 
In the seminal paper \cite{Hallin-AOS_2021}, using McCann's theorem \cite{McCannThm}, the definition of Monge-Kantorovich quantiles have been extended as a push-forward map between the reference and the target measures, that is also the gradient of a convex function. 
In order to get rid of this moment assumption using EOT, future work may consider 
the insightful results from \cite{ghosal2022stability},
as their notion of \textit{cyclically invariant} coupling can always yield a mapping by barycentric projection, which coincides with $Q_\ee$ if the cost $c$ belongs to  $L^1(\mu\otimes\nu)$.


\

\textbf{Funding:} The authors gratefully acknowledge financial support from the Agence Nationale de la Recherche  (MaSDOL grant ANR-19-CE23-0017). J\'er\'emie Bigot is a member of Institut Universitaire de France (IUF), and this work has also been carried out with financial support from the IUF.

\newpage
$$
\textbf{\Large{Appendix}}
$$

\appendix
\section{Proofs of the main results}
\label{Appendix:proof}

The proofs of Proposition \ref{prop:grad}, Proposition \ref{prop:Hess} and Proposition  \ref{prop:convexity} are given in supplementary materials, see \Cref{sup:prop:grad}, \Cref{sup:prop:Hess} and \Cref{sup:prop:convexity}.  We shall now proceed to the proofs of the main results of the paper.

%
%

\subsection{Proof of Proposition \ref{prop:reg_Hee}} 
For  $\tht \in \overline{\ell_1}(\Lambda)$ and $t\in [0,1]$, we denote $\tht_t = \tht^\ee + t(\tht- \tht^\ee)$ and we define the function $\varphi(t) = H_\ee(\tht_t).$ Then, we deduce from a second order Taylor expansion of $\varphi$ with integral remainder that
\begin{equation}
\varphi(1) = \varphi(0) + \varphi'(0) + \int_{0}^1(1-t)\varphi''(t)dt.
\label{Taylorvarphi}
\end{equation}
However, we clearly have
\begin{equation}
\varphi'(t)= DH_\ee(\tht_t)[ \tht - \theta^\ee]\hspace{1cm} \text{ and } \hspace{1cm}  \varphi''(t) =    D^2 H_\ee(\tht_t)[\tht - \theta^\ee, \tht - \theta^\ee].
\end{equation}
Consequently, as $\varphi'(0) = DH_\ee(\tht^\ee)[ \tht - \theta^\ee]=0$, \eqref{Taylorvarphi} can be rewritten as
\begin{equation}
\label{TaylorvarphiReplaced}
H_\ee(\tht) - H_\ee(\theta^\ee) = \int_{0}^1 (1-t)  D^2 H_\ee(\tht_t)[\tht - \theta^\ee, \tht - \theta^\ee] dt.
\end{equation}
Therefore, \eqref{eq:boundHeps} immediately follows from \eqref{eq:opnormHessH} and \eqref{TaylorvarphiReplaced}.
It only remains to prove \eqref{eq:Grad_vs_hess}.
Our strategy is to adapt to the setting of this paper the notion of self-concordance as introduced in \cite{Bach_2010,Bach14} and used in \cite{bercu2020asymptotic,BB_GN} to study the statistical properties of stochastic optimal transport.

\begin{lem}\label{lem:self_conco}
For  $\theta \in \overline{\ell_1}(\Lambda)$ and for all $0<t<1$, denote $\tht_t = \tht^\ee + t(\tht - \tht^\ee)$. Then, the function  $\varphi(t)=H_\ee(\tht_t)$ verifies the self-concordance property 
\begin{equation}\label{self_conco}
\vert\varphi'''(t)\vert \le \frac{2}{\ee}\Vert \tht - \tht^\ee \Vert_{\ell_1} \varphi''(t).
\end{equation}
\end{lem}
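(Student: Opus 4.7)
The plan is to exploit the probabilistic interpretation of the Hessian given by Proposition~\ref{prop:Hess}. Setting $\tau = \tht - \tht^\ee$ and $u_\tau(x) = \sum_{\lambda \in \Lambda} \tau_\lambda \phi_\lambda(x)$ (which is real-valued thanks to the conjugation constraint $\tau_{-\lambda} = \overline{\tau_\lambda}$), the expression \eqref{eq:D2H} with $\tau = \tau'$ can be rewritten compactly as
\begin{equation*}
\varphi''(t) = D^2 H_\ee(\tht_t)[\tau,\tau] = \frac{1}{\ee}\int_\YY \mathrm{Var}_y(u_\tau)\,d\nu(y),
\end{equation*}
where $\mathrm{Var}_y$ and $E_y$ denote respectively the variance and expectation under the probability density $F_{\tht_t,y}$ introduced in \eqref{def_F_tht}. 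Since $|\phi_\lambda(x)| = 1$, one has the crucial uniform bound $|u_\tau(x)| \le \|\tau\|_{\ell_1}$, which will be the engine of the final inequality.

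Next, I would differentiate $\varphi''$ with respect to $t$. Since $\tfrac{d}{dt} u_{\tht_t}(x) = u_\tau(x)$, a direct computation yields the log-derivative identity
\begin{equation*}
\frac{\partial}{\partial t}\log F_{\tht_t,y}(x) = \frac{1}{\ee}\bigl(u_\tau(x) - E_y[u_\tau]\bigr).
\end{equation*}
Consequently, for any fixed function $h$, $\tfrac{d}{dt} E_y[h] = \tfrac{1}{\ee}\mathrm{Cov}_y(h, u_\tau)$. Applying this to $h = u_\tau^2$ and $h = u_\tau$ and combining the two derivatives gives the clean expression
\begin{equation*}
\varphi'''(t) = \frac{1}{\ee^2}\int_\YY E_y\bigl[(u_\tau - E_y[u_\tau])^3\bigr]\,d\nu(y),
\end{equation*}
i.e.\ the third central moment of $u_\tau$ under $F_{\tht_t,y}$, averaged against $\nu$.

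Finally, I would bound this third moment by the second moment using the uniform bound on $u_\tau$. Since $|E_y[u_\tau]| \le \|\tau\|_{\ell_1}$, the triangle inequality gives $|u_\tau(x) - E_y[u_\tau]| \le 2\|\tau\|_{\ell_1}$, hence
\begin{equation*}
\bigl|E_y[(u_\tau - E_y[u_\tau])^3]\bigr| \le 2\|\tau\|_{\ell_1}\,\mathrm{Var}_y(u_\tau).
\end{equation*}
Integrating against $\nu$ and recognising $\tfrac{1}{\ee}\int_\YY \mathrm{Var}_y(u_\tau)\,d\nu(y) = \varphi''(t)$ yields the claimed inequality \eqref{self_conco}.

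The main technical obstacle is the derivation of the neat third-moment formula for $\varphi'''(t)$: this requires careful manipulation of the exponential family parametrisation and honest bookkeeping of the cancellations between $\tfrac{d}{dt}E_y[u_\tau^2]$ and the derivative of the squared mean. Once that identity is in hand, the passage from the third moment to the variance via the $L^\infty$ bound $\|u_\tau\|_\infty \le \|\tau\|_{\ell_1}$ is elementary, and the constant $2/\ee$ comes out precisely as stated.
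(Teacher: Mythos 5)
Your proposal is correct and follows essentially the same route as the paper: both identify $\varphi''(t)$ with an average (over $\nu$) of variances under the density $F_{\tht_t,y}$, compute $\varphi'''(t)$ as the corresponding average of third central moments (the paper via the explicit cumulant factorization $m_3-3m_2m_1+2m_1^3$, you via the exponential-family covariance identity, which amount to the same bookkeeping), and close with the uniform bound $|u_\tau(x)-E_y[u_\tau]|\le 2\|\tau\|_{\ell_1}$. The only presentational difference is that the paper first establishes the inequality per fixed $y$ for $\phi(t)=h_\ee(\tht_t,y)$ and then integrates, whereas you carry the $\nu$-integral throughout.
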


\begin{proof}
For a fixed $y\in\YY$, let $\phi(t) = h_\ee(\tht_t,y)$. Firstly, we show that $\phi(t)$ verifies the self-concordance property.  From the chain rule,
we obtain that
\begin{align*}
\phi'(t) &= Dh_\ee(\tht_t,y)[ \tht - \theta^\ee],\\
\phi''(t) &=   D^2 h_\ee(\tht_t,y)[\tht - \theta^\ee, \tht - \theta^\ee],\\
\phi'''(t) &=    D^3 h_\ee(\tht_t,y)[\tht - \theta^\ee, \tht - \theta^\ee,\tht - \theta^\ee],
\end{align*}
where $D^3 h_\ee$ denotes the third order Fréchet derivative of $h_\ee(\cdot,y)$. 
It follows from
\eqref{Gradh} that
\begin{equation}\label{dphit}
\phi'(t) =  \int_\XX S(x) F_{\tht_t, y}(x)  d\mu(x) \hspace{1cm} \mbox{where} \hspace{1cm} S(x) = \SL(\tht_\lambda-\tht_\lambda^\ee)\phi_\lambda(x).
\end{equation}
Similarly, \eqref{eq:D2h_ee} yields
\begin{equation}\label{phi2nde}
\ee \phi''(t) = \int_\XX S(x)^2 F_{\tht_t,y}(x) d\mu(x) - \left( \int_\XX S(x) F_{\tht_t,y}(x) d\mu(x) \right)^2.
\end{equation}
Hereafter, denoting by $Z_t$ the random variable with density $F_{\tht_t, y}$ with respect to  $\mu$, it appears that $\ee \phi''(t) = \EE [ S(Z_t)^2 ]-\EE [ S(Z_t) ]^2 = \EE [(S(Z_t) - \EE[S(Z_t)])^2]$, that is 
\begin{equation}\label{phi2nde2}
\ee \phi''(t) =   \int_\XX \left(  S(x) - \int_\XX S(z) F_{\tht_t,y}(z) d\mu(z) \right)^2 F_{\tht_t,y}(x) d\mu(x).
\end{equation} 
%
Furthermore, using \eqref{dtht_Ftheta} in the derivation of \eqref{phi2nde}, we have that
\begin{equation*}
\ee\phi'''(t)=\int_\XX S(x)^2 \frac{d}{dt}F_{\tht_t,y}(x)  d\mu(x) - 2\left( \int_\XX S(x) \frac{d}{dt}F_{\tht_t,y}(x) d\mu(x) \right)\int_\XX S(x) F_{\tht_t,y}(x) d\mu(x),
\end{equation*}
which yields
\begin{align*}
\ee^2\phi'''(t) &= \int S^3(x) F_{\tht_t,y}(x) d\mu(x) - \left(\int S^2(x) F_{\tht_t,y}(x) d\mu(x) \right)\left( \int S(x) F_{\tht_t,y} (x)d\mu(x)\right) \\
& - 2  \int S(x) F_{\tht_t,y} (x)d\mu(x) \left[  \int S^2(x) F_{\tht_t,y}(x)d\mu(x) - \left( \int S(x) F_{\tht_t,y}(x) d\mu(x) \right)^2 \right].
\end{align*}
Consequently,
\begin{equation*}
\ee^2\phi'''(t) = m_3 - m_2m_1 - 2m_1(m_2-m_1^2) = m_3 - 3m_2m_1 + 2m_1^3,
\end{equation*}
where $m_i$ stands for the $i$-th moment of the distribution of $S(Z_t)$. 
Then, one recognizes the formula for the cumulant of order 3 of a random variable, and so the above equality can be factorized as 
\begin{align}\label{phi3ieme}
\ee^2 \phi'''(t) = \EE[\left(S(Z_t) - m_1\right)^3] = \int (S(x)-m_1)^3 F_{\tht_t,y}(x) d\mu(x).
\end{align}
Thanks to the connection between $\ee\phi''(t)$ and the  variance term in \eqref{phi2nde2}, \eqref{phi3ieme} leads to 
\begin{equation*}
\ee \vert \phi'''(t) \vert \le \supl_{x \in \XX} \vert S(x)-m_1 \vert \phi''(t).
\end{equation*}
It is easy to see that $\vert S(x)-m_1 \vert \le \vert S(x)\vert + \vert m_1 \vert \le 2\Vert \tht-\tht^\ee\Vert_{\ell_1}$. Hence 
\begin{equation}\label{almostSelfConcord}
\vert\phi'''(t)\vert \le \frac{2}{\ee}\Vert \tht-\tht^\ee\Vert_{\ell_1} \phi''(t).
\end{equation}
Finally, given that $\varphi(t) = H_\ee(\tht_t) = \int_\YY h_\ee(\tht_t,y) d\nu(y) = \int_\YY  \phi(t) d\nu(y) $,
\eqref{almostSelfConcord} induces the self-concordance property of $\varphi$.
\end{proof}

We are now in a position to prove inequality \eqref{eq:Grad_vs_hess}. Denote $\delta = 2\Vert \tht - \tht^\ee \Vert_{\ell_1}/\ee$. It follows from inequality \eqref{self_conco} that, for all $0 < t < 1$, 
$
\vert\varphi'''(t)\vert \le \delta \varphi''(t),
$
which leads to
$
\frac{\varphi'''(t)}{\varphi''(t)} \ge - \delta.
$
%
By integrating the above inequality  between $0$ and $t$, we obtain that $
\log \varphi''(t) - \log \varphi ''(0) \geq - \delta t, $
which means that
$
\frac{\varphi''(t)}{\varphi''(0)} \ge e^{- \delta t}.
$
Integrating once again the previous inequality between $0$ and $1$, we obtain that
\begin{equation}
\label{LastProofPropSelfconco}
\varphi'(1) - \varphi'(0) \ge \left(\frac{1 - e^{- \delta}}{\delta}\right)\varphi''(0).
\end{equation}
Finally, as $\varphi'(1) = DH_\ee(\tht)(\tht-\tht^\ee)$, $\varphi'(0) = 0$ and $\varphi''(0) = D^2H_\ee(\tht^\ee)[\tht-\tht^\ee,\tht-\tht^\ee]$,
inequality \eqref{eq:Grad_vs_hess} holds, which completes the proof of \Cref{prop:reg_Hee}.

\demend
\vspace{-0.5cm}

\subsection{A sufficient condition for \Cref{hyp:DH2OPT}}

\begin{lem}\label{borneINF_hessienne}
For any $\tau \in \overline{\ell_1}(\Lambda)$, 
\begin{equation}\label{borneINF_hess}
D^2 H_\ee(\theta^\ee)[\tau,\tau] \ge \frac{1}{\ee} \left(2 - \int_\YY \int_\XX F^2_{\tht^\ee,y}(x) d\mu(x) d\nu(y)\right)\Vert \tau \Vert_{\ell_2}^2.
\end{equation}
\end{lem}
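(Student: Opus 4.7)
Let $T(x)=\sum_{\lambda\in\Lambda}\tau_\lambda\phi_\lambda(x)$, which is real-valued by the constraint $\tau_{-\lambda}=\overline{\tau_\lambda}$, and which satisfies $\int_\XX T\,d\mu=0$ (since $0\notin\Lambda$) and, by Parseval's identity, $\int_\XX T(x)^2 d\mu(x)=\Vert\tau\Vert_{\ell_2}^2$. First I would rewrite the Hessian \eqref{eq:D2H} evaluated at $\theta^\ee$ in terms of $T$ by recognizing that $\sum_{\lambda'}\tau_{\lambda'}\phi_{\lambda'}(x)\cdot\sum_{\lambda}\overline{\tau_\lambda}\,\overline{\phi_\lambda(x)}=T(x)^2$, and that the bracketed bilinear term is $(\int_\XX T\,F_{\theta^\ee,y}\,d\mu)^2$. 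This yields the compact variance-type representation
\begin{equation*}
D^2 H_\ee(\theta^\ee)[\tau,\tau]=\frac{1}{\ee}\int_{\YY}\!\left[\int_\XX T(x)^2 F_{\theta^\ee,y}(x)\,d\mu(x)-\left(\int_\XX T(x)F_{\theta^\ee,y}(x)\,d\mu(x)\right)^{\!2}\right]\!d\nu(y).
\end{equation*}

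The crucial observation, which I would single out next, is that the first-order optimality condition $DH_\ee(\theta^\ee)=0$, combined with the expression \eqref{part_deriv_h_ee} for the partial derivatives, states that for every $\lambda\in\Lambda$ the $\lambda$-th Fourier coefficient of the function $x\mapsto\int_{\YY}F_{\theta^\ee,y}(x)\,d\nu(y)$ vanishes. Since its $\lambda=0$ Fourier coefficient equals $\int_\YY\int_\XX F_{\theta^\ee,y}(x)\,d\mu(x)\,d\nu(y)=1$, we obtain that
\begin{equation*}
\int_{\YY}F_{\theta^\ee,y}(x)\,d\nu(y)=1 \qquad \text{for $\mu$-a.e. }x\in\XX.
\end{equation*}
Applying Fubini to the first term of the Hessian then gives
$\int_\YY\int_\XX T(x)^2 F_{\theta^\ee,y}(x)\,d\mu(x)\,d\nu(y)=\int_\XX T(x)^2\,d\mu(x)=\Vert\tau\Vert_{\ell_2}^2.$

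For the second term, since $\int_\XX T\,d\mu=0$, I would rewrite $\int_\XX T(x)F_{\theta^\ee,y}(x)\,d\mu(x)=\int_\XX T(x)(F_{\theta^\ee,y}(x)-1)\,d\mu(x)$ and apply the Cauchy--Schwarz inequality in $L^2(\mu)$ to obtain
\begin{equation*}
\left(\int_\XX T F_{\theta^\ee,y}\,d\mu\right)^{\!2}\le \Vert\tau\Vert_{\ell_2}^2\int_\XX (F_{\theta^\ee,y}-1)^2\,d\mu=\Vert\tau\Vert_{\ell_2}^2\left(\int_\XX F_{\theta^\ee,y}^2\,d\mu-1\right),
\end{equation*}
using $\int_\XX F_{\theta^\ee,y}\,d\mu=1$. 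Integrating this bound against $\nu$ and subtracting gives
$D^2 H_\ee(\theta^\ee)[\tau,\tau]\ge \frac{1}{\ee}\Vert\tau\Vert_{\ell_2}^2\bigl[1-(\int_\YY\int_\XX F_{\theta^\ee,y}^2 d\mu\,d\nu-1)\bigr]$, which rearranges exactly to \eqref{borneINF_hess}.

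The only genuinely subtle step is identifying $\int_\YY F_{\theta^\ee,y}\,d\nu=1$ from the optimality condition; everything else is Parseval, Fubini, and a single Cauchy--Schwarz. Once that identity is recognized, the inequality follows essentially automatically, and in particular no regularity beyond $\tau\in\overline{\ell_1}(\Lambda)$ is needed.
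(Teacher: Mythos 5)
Your proof is correct and follows essentially the same route as the paper's: both use the first-order optimality condition $DH_\ee(\theta^\ee)=0$ to make the first term of the Hessian collapse to $\frac{1}{\ee}\Vert\tau\Vert_{\ell_2}^2$, and then a Cauchy--Schwarz bound combined with Parseval to control the second term by $\frac{1}{\ee}\Vert\tau\Vert_{\ell_2}^2\bigl(\int_\YY\int_\XX F_{\theta^\ee,y}^2\,d\mu\,d\nu-1\bigr)$. The only cosmetic difference is that you phrase everything on the function side (expressing the optimality condition as $\int_\YY F_{\theta^\ee,y}\,d\nu\equiv 1$ and applying Cauchy--Schwarz in $L^2(\mu)$), whereas the paper stays on the Fourier-coefficient side (Kronecker delta identity and Cauchy--Schwarz in $\ell_2(\Lambda)$); the two are isometrically equivalent via Parseval.
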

\begin{proof} We already saw from \eqref{eq:D2H} that for any $\tau \in \overline{\ell_1}(\Lambda)$, 
\begin{eqnarray}
D^2 H_\ee(\theta^\ee)[\tau,\tau]  & = &  \frac{1}{\ee}  \sum\limits_{\lambda' \in \Lambda} \SL \tau_{\lambda'} \overline{\tau_{\lambda}}\int_{\YY } \int_{\XX }\phi_{\lambda'}(x) \overline{\phi_{\lambda}(x)} F_{\tht^\ee,y}(x) d\mu(x)  d\nu(y) \nonumber \\
&- &  \frac{1}{\ee} \int_{\YY } \left\vert \SL \tau_{\lambda}  \int_{\XX } \phi_{\lambda}(x) F_{\tht^\ee,y}(x) d\mu(x)  \right\vert^2 d\nu(y). \label{eq:decompDH}
\end{eqnarray}
Our proof consists in a study of the two terms in the right-hand side of \eqref{eq:decompDH}.  
Since \eqref{GradH} only defines $DH_\ee(\tht^\ee)_\lambda$ for all $\lambda \ne 0$,
we deduce from \eqref{part_deriv_h_ee} and \eqref{GradH} that, for all $\lambda \ne \lambda' $,
\begin{equation*}
\int_{\XX }\phi_{\lambda'} (x)\overline{\phi_{\lambda}(x)} F_{\tht^\ee,y}(x) d\mu(x)  d\nu(y) = 
\int_{\YY } \!\int_{\XX }\phi_{\lambda' - \lambda}(x) F_{\tht^\ee,y}(x) d\mu(x)  d\nu(y)
= DH_\ee(\tht^\ee)_{\lambda'-\lambda}.
\end{equation*}
Moreover, as soon as $\lambda = \lambda'$, 
\begin{equation*}
\int_{\YY } \!\int_{\XX }\phi_{\lambda}(x) \overline{\phi_{\lambda}(x)} F_{\tht^\ee,y}(x) d\mu(x)  d\nu(y) = 
\int_{\YY } \!\int_{\XX } F_{\tht^\ee,y}(x) d\mu(x)  d\nu(y) = 1.
\end{equation*}
Hence, from the optimality condition $DH_\ee(\tht^\ee) = 0$, we obtain that
\begin{equation*}
\int_{\YY } \int_{\XX }\phi_{\lambda'} (x)\overline{\phi_{\lambda}(x)} F_{\tht^\ee,y}(x) d\mu(x)  d\nu(y) = \delta_0(\lambda'-\lambda),
\end{equation*}
where $\delta_0$ stands for the dirac function at $0$. Therefore, it follows that 
\begin{equation}\label{M_tht_ee}
 \frac{1}{\ee}  \sum\limits_{\lambda' \in \Lambda} \SL \tau_{\lambda'} \overline{\tau_{\lambda}}\int_{\YY } \int_{\XX }\phi_{\lambda'}(x) \overline{\phi_{\lambda}(x)} F_{\tht^\ee,y}(x) d\mu(x)  d\nu(y) =  \frac{1}{\ee} \Vert \tau \Vert_{\ell_2}^2.
\end{equation}
From now on, our goal is to find an upper bound for the second term
in the right-hand side of \eqref{eq:decompDH}. By Cauchy-Schwarz's inequality, we have
that
\begin{equation}
\left\vert \SL \tau_{\lambda}  \int_{\XX } \phi_{\lambda}(x) F_{\tht^\ee,y}(x) d\mu(x)  \right\vert^2   \le \Vert \tau \Vert_{\ell_2(\Lambda)}^2 \Vert Dh_\ee(\tht^\ee,y) \Vert_{\ell_2(\Lambda)}^2. \label{eq:CS}
\end{equation}
Moreover, it follows from Parseval's identity, \cite{SteinWeiss}[Theorem 1.7] together with  the fact that $\int_\XX F_{\tht^\ee,y}(x) d\mu(x) = 1$, that
\begin{equation}
 \Vert  Dh_\ee(\tht^\ee,y) \Vert_{\ell_2(\Lambda)}^2 =   \int_\XX F^2_{\tht^\ee,y}(x) d\mu(x) -1. \label{eq:P}
\end{equation}
Hence, combining \eqref{eq:CS} and \eqref{eq:P}, we obtain that 
\begin{equation}\label{N_tht_ee}
 \int_{\YY } \left\vert \SL \tau_{\lambda}  \int_{\XX } \phi_{\lambda}(x) F_{\tht^\ee,y}(x) d\mu(x)  \right\vert^2 d\nu(y) \le \Vert \tau \Vert_{\ell_2}^2 \left( \int_{\YY } \! \int_\XX F^2_{\tht^\ee,y}(x) d\mu(x)d\nu(y) -1\right).
\end{equation}
Finally, we deduce \eqref{borneINF_hess} from \eqref{eq:decompDH}, \eqref{M_tht_ee} and \eqref{N_tht_ee}.
\end{proof}
\vspace{0.1cm}


\subsection{Proof of Theorem \ref{thmAS}} 
We shall proceed to the almost sure convergence of the random  sequence $(\widehat\theta_n)_n$. Let $(V_n)$ be the Lyapunov sequence defined, for all $n\geq 1$, by
\begin{equation*}
V_n = \Vert \widehat\tht_n - \tht^\ee \Vert_{W^{-1}}^2.
\end{equation*}
\Cref{hyp:Winv} ensures that $\| \tht^\ee \Vert_{W^{-1}} < + \infty$. Moreover, we clearly have from
\eqref{defthetan} that
\begin{equation*}
W^{-1/2} \widehat\tht_{n+1} =   W^{-1/2}  \widehat{\theta}_{n} - \gamma_n W^{1/2} D_\theta h_\ee(\widehat{\theta}_{n} , Y_{n+1}),
\end{equation*}
where $W^{\alpha}$ stands for the  linear operator, for $\alpha \in \{-1/2,1/2\}$, that maps $v = (v_\lambda)_{\lambda \in \Lambda} \in \ell_{\infty}(\Lambda)$ to $ (w_\lambda^\alpha v_\lambda)_{\lambda \in \Lambda}$. It follows from \eqref{DEFnormW} that $\|  \widehat\tht_n \Vert_{W^{-1}} = \| W^{-1/2} \widehat\tht_n \|_{\ell_2}$. Consequently,
\begin{equation*}
\|  \widehat\tht_{n+1} \Vert_{W^{-1}} \leq \|  \widehat\tht_{n} \Vert_{W^{-1}} +  \gamma_n \|W^{1/2} D_\theta h_\ee(\widehat{\theta}_{n} , Y_{n+1})\|_{\ell_2}.
\end{equation*}
Furthermore, we obtain from \eqref{eq:opnormgradienth} that
\begin{equation*}
 \|W^{1/2} D_\theta h_\ee(\widehat{\theta}_{n} , Y_{n+1})\|_{\ell_2}^2 \leq  \|w \|_{\ell_1} \sup\limits_{\lambda \in \Lambda} \left\vert \frac{\partial h_\ee(\theta,y) }{\partial \theta_\lambda} \right\vert^2 \leq  \|w \|_{\ell_1} < \infty.
\end{equation*}
Therefore, thanks to the assumption that $\Vert  \widehat\theta_0\Vert_{W^{-1}}  < +\infty$, we deduce by induction that $\|  \widehat\tht_n \Vert_{W^{-1}} < + \infty$, which means that the  Lyapunov sequence $(V_n)$ is well defined. From now on, it follows from \eqref{defthetan}  and \eqref{DEFnormW} that for all $n \geq 0$,
\begin{eqnarray*}
V_{n+1} &=& \Vert  \widehat\tht_n - \tht^\ee - \gamma_n W D_\theta h_\ee(\widehat{\theta}_{n} , Y_{n+1}) \Vert_{W^{-1}}^2,\\
&=& V_n - 2\gamma_n \langle \widehat\tht_n - \tht^\ee, D_\theta h_\ee(\widehat\tht_n, Y_{n+1})  \rangle + \gamma_n^2 \Vert D_\theta h_\ee(\widehat\tht_n, Y_{n+1}) \Vert_{W}^2.
\end{eqnarray*}
Moreover, \eqref{eq:opnormgradienth} implies that $\Vert D_\theta h_\ee(\widehat\tht_n, Y^{n+1}) \Vert_{W}^2 \le \Vert w \Vert_{\ell_1}$ which ensures that
for all $n \geq 0$,
\begin{equation}
\label{RS1}
V_{n+1} \leq V_n - 2\gamma_n \langle \widehat\tht_n - \tht^\ee, D_\theta h_\ee(\widehat\tht_n, Y_{n+1})  \rangle + \gamma_n^2 \Vert w \Vert_{\ell_1}.
\end{equation}
Denote by $\mathcal F_n= \sigma(Y_1,\ldots,Y_n)$ the $\sigma$-algebra generated by $Y_1,\ldots,Y_n$ drawn from $\nu$. 
From \Cref{prop:grad}, $\mathbb E[D_\theta h_\ee(\widehat\tht_n, Y_{n+1}) | \mathcal F_n]= D H_\ee(\widehat\tht_n)$,
which implies via \eqref{RS1} that for all $n \geq 0$,
\begin{equation}
\mathbb E[V_{n+1} \vert \mathcal F_n ] \le V_n +A_n - B_n
\hspace{1cm}\text{a.s.}
\end{equation}
where $(A_n)$ and $(B_n)$ are the two positive sequences given, for all $n\geq 0$, by
\begin{equation*}
A_n= \gamma_n^2\Vert w \Vert_{\ell_1} \hspace{1cm}\text{and}\hspace{1cm}
B_n= 2\gamma_n DH_\ee(\widehat\tht_n)[\widehat\tht_n - \tht^\ee].
\end{equation*}
Therefore, as
$\sum_{n=0}^\infty A_n < \infty,
$
we deduce from the Robbins-Siegmund theorem \cite{robbins_monro}
that the sequence $ (V_n)$ converges almost surely to a finite random variable 
$V$ and that the series
\begin{equation}\label{cvgBn}
\sum_{n=0}^\infty B_n = 2 \sum_{n=0}^\infty  \gamma_n DH_\ee(\widehat\tht_n)[\widehat\tht_n - \tht^\ee]  < \infty 
\hspace{1cm}\text{a.s.}
\end{equation}
Hence, by combining \eqref{cvgBn} with  the first condition in \eqref{condgamma}, it necessarily follows that
\begin{equation}
\lim_{n \to \infty} DH_\ee(\widehat\tht_n)[\widehat\tht_n - \tht^\ee] = 0  \hspace{1cm} \text{a.s.} \label{eq:convDH}
\end{equation}
Hereafter, our goal is to prove that $\Vert \widehat\tht_n - \tht^\ee \Vert_{\ell^2}$ goes to zero almost surely as $n$ tends to infinity. From now on, let $g$ be the function defined in \eqref{defg}.
One can easily see that $g$ is a continuous and strictly decreasing function. Moreover, using the Cauchy-Schwarz inequality, one has that $\Vert \widehat\tht_n - \tht^\ee \Vert_{\ell_1}^2\leq \Vert w\Vert_{\ell_1} V_n$. Hence, it follows from inequality 
\eqref{eq:Grad_vs_hess} that for all $n \geq 0$,
\begin{equation}
\label{RS2}
DH_\ee(\widehat\tht_n)[\widehat\tht_n - \tht^\ee] \geq g \Big(  \frac{2}{\varepsilon} \Vert w\Vert_{\ell_1} V_n \Big) D^2H_\ee(\tht^\ee)[\widehat\tht_n-\tht^\ee,\widehat\tht_n-\tht^\ee].
\end{equation}
Therefore, we obtain from \Cref{hyp:DH2OPT}  and inequality \eqref{RS2} that 
for all $n \geq  0$,
\begin{equation}
\label{RS3}
DH_\ee(\widehat\tht_{n})[\widehat\tht_{n} - \tht^\ee] \geq c_\ee g \Big(  \frac{2}{\varepsilon} \Vert w\Vert_{\ell_1} V_{n} \Big)  \Vert \widehat\tht_{n}-\tht^\ee \Vert_{\ell^2}^2.
\end{equation}
Since $(V_n)$ converges a.s.\ to a finite random variable $V$, it follows by continuity of $g$ that  
\begin{equation}
\label{RS4}
\lim_{n \to \infty}  g \Big(  \frac{2}{\varepsilon} \Vert w\Vert_{\ell_1} V_n \Big)  =  g \Big(  \frac{2}{\varepsilon} \Vert w\Vert_{\ell_1} V  \Big) 
\hspace{1cm}\text{a.s.}
\end{equation}
and the limit in the right-hand side of \eqref{RS4} is positive almost surely. 
Therefore, we conclude from \eqref{eq:convDH}, \eqref{RS3} and \eqref{RS4} that
\begin{equation*}
\lim_{n \to \infty}\Vert \widehat\theta_n - \theta^{\ee}   \Vert_{\ell_2} = 0 \hspace{1cm} \text{a.s.}
\end{equation*}
Finally, we deduce from Parseval's identity, \cite{SteinWeiss}[Theorem 1.7] that 
\begin{equation*}
\int_{\XX}|\widehat{u}_{\varepsilon}^{\, n}(x)  - u_\ee(x)|^2 d\mu(x) = \Vert \widehat\theta_n - \theta^{\ee}   \Vert_{\ell_2}^2
\end{equation*}
which achieves the proof of \Cref{thmAS}.

\demend
\vspace{-0.5cm}

\newpage
$$
\textbf{\Large{SUPPLEMENTARY MATERIALS}}
$$

\appendix
\renewcommand{\thesection}{SM.\Alph{section}}
\pagenumbering{roman}
\section{Proof of Proposition \ref{prop:grad}}\label{sup:prop:grad}

We first state a result about Fr\'echet differentiation under Lebesgue integrals, that follows from \cite[Lemma A.2]{LeibnizRule}, and which extends well-known results on the differentiation of integral functionals. For the proof of a similar result, we also refer to the unpublished note \cite{Kammar}. 

\begin{lem} [Leibniz's rules of Fr\'echet differentiation]\label{lem:Leibniz}
\hspace{-0.4cm} Let $(\Theta,\| \cdot \|)$ be an infinite dimensional Banach space and $\sigma$ a finite measure on a measurable space $\TT$. Let $\theta_{0} \in \Theta$ and denote by $B(\theta_{0},R) \subset \Theta$ the ball  of center $\theta_{0}$ and radius $R$.  Consider a function $f : \Theta \times \TT \to \R$ that is Fr\'echet differentiable at $\theta_{0}$ (for every $t \in \TT$), and suppose that there exists $K \in L^{1}(\sigma)$
such that, for all $\theta_1, \theta_2 \in B(\theta_{0},R)$ and all $ t \in \TT$,
\begin{equation*}
|f(\theta_1,t) - f(\theta_2,t) | \leq K(t) \| \theta_1 - \theta_2 \|.
\end{equation*}
Then, the integral functional $F : \Theta \to \R$ defined by
$
F(\theta) = \int_{\TT} f(\theta,t) d\sigma(t)
$
is  Fr\'echet differentiable at $\theta_{0}$ and
\begin{equation*}
D F(\theta_{0}) = \int_{\TT}  D_{\theta} f(\theta_{0},t) d\sigma(t),
\end{equation*}
where  $D_{\theta} f(\theta_{0},t)$ denotes the Fr\'echet derivative of $\theta \mapsto f(\theta,t)$ at $\theta_{0}$.
\end{lem}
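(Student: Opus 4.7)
The plan is to identify the candidate Fr\'echet derivative as the linear operator $L(h) := \int_\TT D_\theta f(\theta_0, t)[h] \, d\sigma(t)$, and then to establish $F(\theta_0 + h) - F(\theta_0) - L(h) = o(\|h\|)$ by a dominated convergence argument. The proof is essentially Leibniz's rule adapted to the Banach-space setting, so the heart of the matter is finding an integrable dominant for the difference quotients.

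First, I would check that $L$ is a well-defined bounded linear functional on $\Theta$. For each fixed $t$ and each $h \in \Theta$, the definition of the Fr\'echet derivative gives
\[
D_\theta f(\theta_0, t)[h] = \lim_{s \to 0} \frac{f(\theta_0 + s h, t) - f(\theta_0, t)}{s},
\]
while the Lipschitz hypothesis implies $|s^{-1}(f(\theta_0 + s h, t) - f(\theta_0, t))| \le K(t) \|h\|$ as soon as $|s|\|h\| \le R$. Passing to the limit yields the operator norm bound $\|D_\theta f(\theta_0, t)\|_{op} \le K(t)$, so $K \in L^1(\sigma)$ ensures $|L(h)| \le \|K\|_{L^1(\sigma)} \|h\|$ and linearity in $h$ follows from the linearity of $D_\theta f(\theta_0,t)$ and of the integral.

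Next, for any $h$ with $\|h\| \le R$, I would write
\[
F(\theta_0 + h) - F(\theta_0) - L(h) = \int_\TT \bigl[ f(\theta_0 + h, t) - f(\theta_0, t) - D_\theta f(\theta_0, t)[h] \bigr] \, d\sigma(t),
\]
and bound the integrand using both the Lipschitz hypothesis and the operator norm bound above, obtaining the domination $2 K(t) \|h\|$. For any sequence $(h_n)$ with $h_n \to 0$ and $h_n \neq 0$, the integrand divided by $\|h_n\|$ tends pointwise in $t$ to zero by Fr\'echet differentiability of $f(\cdot, t)$ at $\theta_0$, and is uniformly dominated by $2 K(t) \in L^1(\sigma)$. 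Dominated convergence then gives $\|h_n\|^{-1} (F(\theta_0 + h_n) - F(\theta_0) - L(h_n)) \to 0$, which is precisely Fr\'echet differentiability of $F$ at $\theta_0$ with derivative $L$.

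The main obstacle, which I would address in passing, is a measurability issue: one must ensure that $t \mapsto D_\theta f(\theta_0, t)[h]$ is measurable for each fixed $h$ so that $L(h)$ is actually a legitimate integral. This is handled by expressing it as the pointwise limit along a countable sequence $s_n \to 0$ of the difference quotients $s_n^{-1}(f(\theta_0 + s_n h, t) - f(\theta_0, t))$, which are measurable in $t$ under the implicit assumption (required for $F$ itself to be defined) that $f(\cdot, t)$ is measurable. Once this is in place, no further technicality arises and the scheme above is concluded in a few lines.
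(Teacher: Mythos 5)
Your proof is correct. The paper itself does not prove this lemma---it cites \cite[Lemma A.2]{LeibnizRule} and the unpublished note \cite{Kammar}---so there is no internal argument to compare against, but your self-contained dominated-convergence proof is exactly the standard route: you correctly extract the pointwise operator-norm bound $\|D_\theta f(\theta_0,t)\|_{op}\le K(t)$ from the Lipschitz hypothesis by taking $s\to 0$ in the difference quotient, use it together with the Lipschitz bound to dominate the normalized remainder by $2K\in L^1(\sigma)$, and invoke Fr\'echet differentiability of $f(\cdot,t)$ for the pointwise limit; the sequential characterization of the limit $\|h\|\to 0$ is valid since $\Theta$ is a metric space, and your remark on measurability of $t\mapsto D_\theta f(\theta_0,t)[h]$ via countable difference quotients closes the only genuine technical gap.
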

In what follows, we will apply \cref{lem:Leibniz} with $\Theta = \bar{\ell}_{1}(\Lambda)$, $\TT = \XX$ and $\sigma = \mu$ to obtain the expression of the Fr\'echet differential of $H_\ee$.  Let us  first prove that, for every $y \in \YY$, the function $h_\ee (\cdot,y)  : \bar{\ell}_{1}(\Lambda) \to \R$ defined in \eqref{Se}
is Fr\'echet differentiable.  To this end, we introduce the function $g_{y}(\cdot,x) :  \bar{\ell}_{1}(\Lambda) \to \R$ defined as
\begin{equation}
\label{def_gy_tht}
g_{y}(\theta,x) = \frac{1}{\ee} \left(\SL \theta_\lambda \phi_\lambda(x) - c(x,y) \right)
\end{equation}
and $G_{y}(\tht) = \int_\XX \exp (g_{y}(\theta,x)) d\mu(x)$. In this  way,  one has that $h_\ee (\theta,y)  = \ee \log G_{y}(\tht)+\ee$. For every $x \in \XX$, the function $\theta \mapsto \exp(g_{y}(\theta,x)) $ is clearly Fr\'echet differentiable and, for $ \tau \in \bar{\ell}_{1}(\Lambda)$,
\begin{equation}
\label{deriv_gy_tht}
D_{\theta} \exp(g_{y}(\theta,x))[\tau] =  \frac{1}{\varepsilon} \SL  \exp(g_{y}(\theta,x))  \phi_\lambda(x)  \tau_{\lambda}.
\end{equation}
Moreover, it is a bounded linear operator from $\bar{\ell}_{1}(\Lambda)$ to $\R$. In what follows, we identify this operator to the infinite-dimensional vector
\begin{equation*}
D_{\theta} \exp(g_{y}(\theta,x)) =  \frac{1}{\varepsilon}  \exp(g_{y}(\theta,x)) \left( \overline{\phi_\lambda(x)}  \right)_{\lambda \in \Lambda}.
\end{equation*}
From now on, let $\theta_0 \in  \bar{\ell}_{1}(\Lambda)$ and $R > 0$. Then, for any $\theta_1,\theta_2 \in B(\theta_{0},R)$, the mean value theorem for functions defined on a Banach space implies that
\begin{equation}
| \exp (g_y(\theta_1,x)) - \exp (g_y(\theta_2,x)) | \leq \sup_{\theta \in B(\theta_{0},R)} \|D_{\theta} \exp (g_{y}(\theta,x)) \|_{op} \|\theta_1  - \theta_2 \|, \label{eq:meanvalue}
\end{equation}
where the operator norm of $D_{\theta} \exp (g_{y}(\theta,x))$ is defined as
\begin{equation*}
 \|D_{\theta} \exp (g_{y}(\theta,x)) \|_{op} = \sup_{\| \tau \|_{\ell_1} \leq 1} \vert D_{\theta} \exp (g_{y}(\theta,x)) [\tau]\vert .
\end{equation*}
Since
\begin{equation*}
 | D_{\theta} \exp (g_{y}(\theta,x)) [\tau]|  = \left| \frac{1}{\varepsilon} \SL   \exp (g_{y}(\tht,x))   \phi_\lambda(x) \tau_{\lambda} \right| \leq \frac{1}{\varepsilon} \exp ( g_{y}(\tht,x))  \SL | \tau_{\lambda} |,
\end{equation*}
one has that, for any $\theta \in \bar{\ell}_{1}(\Lambda)$,
\begin{equation}
 \|D_{\theta} \exp (g_{y}(\theta,x)) \|_{op} \leq  \frac{1}{\varepsilon} \exp ( g_{y}(\tht,x)) \leq   \frac{1}{\varepsilon} \exp\left(\frac{\SL \vert \theta_\lambda \vert + c(x,y)}{\ee}\right) \label{eq:op}
\end{equation}
Consequently, let
\begin{equation*}
K_y(x) =  \frac{1}{\varepsilon} \exp\left( \frac{c(x,y)}{\ee} \right) \sup_{\theta \in B(\theta_{0},R)} \exp\left( \frac{\| \theta \|_{\ell_1}}{\ee} \right).
\end{equation*}
It follows from \eqref{eq:meanvalue} and \eqref{eq:op} that, for all $ \theta_1,\theta_2 \in B(\theta_{0},R)$  and $ x \in \XX$,
\begin{equation*}
| \exp (g_{y}(\theta_1,x)) - \exp (g_{y}(\theta_2,x)) | \leq  K_y(x)  \|\theta_1  - \theta_2 \|.
\end{equation*}
Obviously, for all $y\in\YY$, the function $K_y$ belongs to $L^{1}(\mu)$, and therefore, by  \cref{lem:Leibniz}, we conclude that $G_{y}$  and  $ h_\ee (\cdot,y)$ are Fr\'echet differentiable  and that the linear operator $D_{\theta} h_\ee (\theta,y)$ is identified as an element of $\bar{\ell}_{\infty}(\Lambda)$ given by
\begin{eqnarray}
\label{Dgrad_h_ee}
D_{\theta} h_\ee (\theta,y) & =  & \left( \frac{\int_\XX  \overline{\phi_\lambda(x)} \exp(g_{y}(\tht,x)) d\mu(x) }{ \int_\XX \exp(g_{y}(\tht,x))d\mu(x)}\right)_{\lambda \in \Lambda}.
\end{eqnarray}
Similarly, to prove that the function $H_\ee(\theta) = \int_{\YY}  h_\ee (\theta,y) d\nu(y)$ is Fr\'echet differentiable, it is sufficient to bound the operator norm of $D_{\theta} h_\ee (\theta,y)$ for $\theta \in B(\theta_{0},R)$. Recalling that
\begin{equation*}
F_{\theta,y}(x) = \frac{\exp \left(  g_{y}(\tht,x) \right) }{ \int_\XX \exp \left(g_{y}(\tht,x) \right) d\mu(x)},
\end{equation*}
we remark that, for any  $\tau \in  \bar{\ell}_{1}(\Lambda)$,
\begin{equation*}
|D_{\theta} h_\ee (\theta,y) [\tau]| = \left| \SL \int_\XX F_{\theta,y}(x)  \phi_\lambda(x)d\mu(x) \tau_{\lambda} \right| \leq  \int_\XX F_{\theta,y}(x) d\mu(x)  \SL  |\tau_{\lambda}| = \| \tau \|_{\ell_1}.
\end{equation*}
Therefore, $\| D_{\theta} h_\ee (\theta,y) \|_{op} \leq 1$ which proves inequality \eqref{eq:opnormgradienth}. It also means that $D_{\theta} h_\ee(\theta, y)$ can be identified as an element of $\bar{\ell}_{\infty}(\Lambda)$. Thus, arguing as previously, that is by combining the mean value theorem with \cref{lem:Leibniz}, we obtain that $H_\ee(\theta)$ is Fr\'echet differentiable with
\begin{equation*}
D H_\ee(\theta) =  \int_{\YY}  D_{\theta} h_\ee (\theta,y) d\nu(y) = \left( \int_{\YY} \frac{\partial h_\ee(\theta,y) }{\partial \theta_\lambda}   d \nu(y) \right)_{\lambda \in \Lambda}
\end{equation*}
which can also be identified as an element of  $\bar{\ell}_{\infty}(\Lambda)$ such that $\| D H_\ee(\theta) \|_{op} = \| D H_\ee(\theta) \|_{\ell_{\infty}}$ satisfies inequality \eqref{eq:opnormgradientH} by combining inequality \eqref{eq:opnormgradienth} together with the fact that $\nu$ is a probability measure. This achieves the proof of \cref{prop:grad}.

\hfill
\demend

\section{Proof of Proposition \ref{prop:Hess}}\label{sup:prop:Hess}
First, let us recall that, for $(\Theta,\| \cdot \|)$ a given  Banach space, a function $f : \Theta \to \R$  is twice Fr\'echet differentiable if $D f $ is  Fr\'echet differentiable. In this case, the second order  Fr\'echet derivative of $f$ at $\theta_0$ is denoted by $ D^2 f(\theta_0)$ and it is identified as an element of  $L(\Theta \times \Theta, \R)$ the set of continuous bilinear mapping  from $\Theta \times \Theta$ to $\R$. Moreover, the operator norm of $ D^2 f(\theta_0)$ is defined as
\begin{equation*}
\| D^2 f(\theta_0) \|_{op} = \sup_{\|\theta\| \leq 1, \| \theta'\| \leq 1} |D^2 f(\theta_0)[\theta,\theta']|.
\end{equation*}
To derive the expression of the  second order  Fr\'echet derivative of the functions $h_\ee(\cdot,y)$ and $H_{\varepsilon}$, we use similar arguments to those in the proof of \cref{prop:grad}. First, recall from \eqref{Dgrad_h_ee} that the Fr\'echet derivative $D_\tht h_\ee(\theta,y)$ is the linear operator defined as
\begin{equation*}
D_\tht h_\ee(\theta,y) = \left( \int F_{\theta,y}(x) \overline{\phi_\lambda(x)} d\mu(x) \right)_{\lambda \in \Lambda} =  \int  \psi_{y}(x,\theta) d\mu(x) 
\end{equation*}
where $\psi_{y}(x,\theta) :  \bar{\ell}_{1}(\Lambda) \to \R$ is  the linear operator
\begin{align*}
\psi_{y}(x,\theta)[\tau] = \SL  F_{\theta,y}(x) \phi_\lambda(x) \tau_{\lambda} = \SL  F_{\theta,y}(x) \overline{\phi_\lambda(x)} \overline{\tau_{\lambda}}.
\end{align*}
As a standard strategy, we aim to derive this with respect to $\tht$. 
From \eqref{def_gy_tht}, one has that 
\begin{equation*}
F_{\theta,y}(x) = \frac{\exp(g_y(\tht,x))}{G_y(\tht)}.
\end{equation*}
Therefore, using \eqref{deriv_gy_tht} combined with the differentiability of $G_y(\tht)$,
\begin{equation}\label{dtht_Ftheta}
D_\tht F_{\tht,y}(x)[\tau] = \frac{1}{\ee} \left( \SL \tau_\lambda \phi_\lambda(x) F_{\tht,y}(x) - F_{\tht,y}(x)\int_\XX \SL \tau_\lambda \phi_\lambda(z)F_{\tht,y}(z)d\mu(z) \right).
\end{equation}
Thus, the mapping $\theta \mapsto \psi_{y}(x,\theta)[\tau]$ is clearly Fr\'echet differentiable, and its Fr\'echet derivative can be identified as the following symmetric  bilinear mapping from $\bar{\ell}_{1}(\Lambda)  \times \bar{\ell}_{1}(\Lambda) $ to $\R$
\begin{eqnarray*}
D_{\theta} \psi_{y}(x,\theta)[\tau,\tau']  & =  & \frac{1}{\ee} \left( \sum\limits_{\lambda' \in \Lambda} \SL \tau'_{\lambda'} \overline{\tau_{\lambda}} \left( \phi_{\lambda'}(x)\overline{\phi_{\lambda}(x)} F_{\theta,y}(x) -   \phi_{\lambda'}(x)F_{\theta,y}(x)   \overline{\phi_{\lambda}(x)} F_{\theta,y}(x)  \right) \right) \\
 & = &  \frac{1}{\ee}  \left( \sum\limits_{\lambda' \in \Lambda} \SL \tau'_{\lambda'} \overline{\tau_{\lambda}} \phi_{\lambda'}(x)\overline{\phi_{\lambda}(x)} F_{\theta,y}(x) - \SL \tau'_{\lambda} \phi_{\lambda}(x)F_{\theta,y}(x)   \overline{ \SL \tau_{\lambda} \phi_{\lambda}(x)F_{\theta,y}(x) } \right).
\end{eqnarray*}
We now compute an upper bound for the norm of this linear operator. One can observe that, for $\tau = \tau'$,
\begin{equation}
|D_{\theta} \psi_{y}(x,\theta)[\tau,\tau]| \leq \frac{1}{\ee} F_{\theta,y}(x)  \|\tau\|_{\ell_1}^2,   \label{eq:boundtau}
\end{equation}
thanks to the elementary fact that
\begin{equation}
\label{Hess_vert2_pos}
\SL \tau_{\lambda} \phi_{\lambda}(x)F_{\theta,y}(x)   \overline{ \SL \tau_{\lambda} \phi_{\lambda}(x)F_{\theta,y}(x) } =  \left|  \SL \tau_{\lambda} \phi_{\lambda}(x)F_{\theta,y}(x) \right|^2 \geq 0.
\end{equation}
Then, using the equality,
\begin{equation*}
4 D_{\theta} \psi_{y}(x,\theta)[\tau,\tau'] = D_{\theta} \psi_{y}(x,\theta)[\tau+\tau',\tau+\tau'] -D_{\theta} \psi_{y}(x,\theta)[\tau-\tau',\tau-\tau'] 
\end{equation*}
combined with the upper bound \eqref{eq:boundtau}, we obtain that
\begin{equation*}
4 |D_{\theta} \psi_{y}(x,\theta)[\tau,\tau']|  \leq  \frac{1}{\ee} F_{\theta,y}(x) \left( \|\tau +\tau'\|_{\ell_1}^2 +  \|\tau -\tau'\|_{\ell_1}^2  \right).
\end{equation*}
Therefore, we immediately obtain that
\begin{equation*}
\sup_{\| \tau \|_{\ell_1} \leq 1, \| \tau' \|_{\ell_1} \leq 1} |D_{\theta} \psi_{y}(x,\theta)[\tau,\tau']| \leq  \frac{2}{\ee} F_{\theta,y}(x).
\end{equation*}
Consequently, 
we may proceed as in the proof of \cref{prop:grad} to obtain that $h_\ee(\cdot,y)$ is twice Fr\'echet differentiable and that its second Fr\'echet derivative is  the following symmetric  bilinear mapping  from $\bar{\ell}_{1}(\Lambda)  \times \bar{\ell}_{1}(\Lambda) $ to $\R$
\begin{eqnarray*}
D^2_\tht h_\ee(\theta,y)[\tau,\tau']  & = &  \frac{1}{\ee}  \sum\limits_{\lambda' \in \Lambda} \SL \tau'_{\lambda'} \overline{\tau_{\lambda}} \int_{\XX }\phi_{\lambda'}(x) \overline{\phi_{\lambda}}(x) F_{\theta,y}(x) d\mu(x) \\
& & -  \frac{1}{\ee} \left( \SL \tau'_{\lambda} \int_{\XX } \phi_{\lambda}(x) F_{\theta,y}(x) d\mu(x)  \right) \overline{\left( \SL \tau_{\lambda} \int_{\XX } \phi_{\lambda}(x)F_{\theta,y}(x) d\mu(x) \right)}.
\end{eqnarray*}
Note that, for $\tau = \tau'$, an application of Jensen's inequality with respect to the probability measure $F_{\theta,y}(x) d\mu(x) $ implies that $D^2_\tht h_\ee(\theta,y)[\tau,\tau] \geq 0$. Moreover, 
it follows once again from \eqref{Hess_vert2_pos} together with
the elementary fact that $ \int_{\XX} F_{\theta,y} d\mu = 1$, that
\begin{equation}
D^2_\tht h_\ee(\theta,y)[\tau,\tau] \leq \frac{1}{\ee} \|\tau\|_{\ell_1}^2.   \label{eq:boundtau2}
\end{equation}
Hereafter, we deduce from the equality
\begin{equation*}
4 D^2_\tht h_\ee(\theta,y) [\tau,\tau'] = D^2_\tht h_\ee(\theta,y)[\tau+\tau',\tau+\tau'] -D^2_\tht h_\ee(\theta,y)[\tau-\tau',\tau-\tau'], 
\end{equation*}
the positivity of $D^2_\tht h_\ee(\theta,y)[\tau-\tau',\tau-\tau']$ and inequality \eqref{eq:boundtau2}, that 
 \begin{equation*}
 4 |D^2_\tht h_\ee(\theta,y) [\tau,\tau']| \leq D^2_\tht h_\ee(\theta,y)[\tau+\tau',\tau+\tau'] \leq  \frac{1}{\ee} \|\tau +\tau'\|_{\ell_1}^2.
 \end{equation*}
It ensures that
\begin{equation*}
\| D^2_\tht h_\ee(\theta,y) \|_{op} = \sup_{\| \tau \|_{\ell_1} \leq 1, \| \tau' \|_{\ell_1} \leq 1} |D^2_\tht h_\ee(\theta,y) [\tau,\tau'] | \leq  \frac{1}{\ee},
\end{equation*}
which proves inequality \eqref{eq:opnormHessh}. 

Finally, combining the above upper bound on $\| D^2_\tht h_\ee(\theta,y) \|_{op} $ and using again  an adaptation of \cref{lem:Leibniz} to obtain a Leibniz's formula for the second order Fr\'echet differentiation under the integral sign, one can prove that $H_\ee(\theta)$ is twice Fr\'echet differentiable by integrating $D^2_\tht h_\ee(\theta,y) $ with respect to  $d\nu(y)$, which implies that $D^2  H_\ee(\theta)$ is the linear operator defined by \eqref{eq:D2H}. Moreover, the upper bound \eqref{eq:opnormHessH} follows from inequality \eqref{eq:opnormHessh} and the fact that $\nu$ is a probability measure, which completes the proof of \cref{prop:Hess}.

\hfill
\demend

\section{Proof of Proposition \ref{prop:convexity}} \label{sup:prop:convexity}

For $x \in \XX$, $y\in\YY$ and for $ (\theta^{(1)},\theta^{(2)}) \in \bar{\ell}_{1}(\Lambda)\times \bar{\ell}_{1}(\Lambda)$, denote
\begin{equation*}
e_1(x,y) = \exp \left( \frac{\SL \theta_\lambda^{(1)} \phi_\lambda(x) -c(x,y)}{\epsilon} \right)
\end{equation*}
and
\begin{equation*}
e_2(x,y) =  \exp \left( \frac{\SL \theta_\lambda^{(2)} \phi_\lambda(x) -c(x,y)}{\epsilon} \right).
\end{equation*}
We have, for all $0 < t < 1$, and for a fixed $y\in\YY$, that 
\begin{align}
t h_\ee ( \theta^{(1)}, y ) + (1-t)h_\ee ( \theta^{(2)}, y)  
& = \ee t \log \int_{\XX} e_1(x,y) d\mu(x) + (1-t) \log \int_{\XX} e_2(x,y) d\mu(x)   +  \ee, \nonumber \\
& = \ee \mathbb  \log \left( \left( \int_{\XX} e_1(x,y) d\mu(x) \right)^t \left( \int_{\XX} e_2(x,y) d\mu(x) \right)^{1-t}\right) + \ee. \label{eq:log}
\end{align}
Hereafter, applying H{\"o}lder's inequality to the functions $f(x)=e_1^t(x,y)$ and $g(x)=e_2^{1-t}(x,y)$ with H{\"o}lder conjugates $p=1/t$ and $q=1/(1-t)$, we obtain that
\begin{equation}
\int_{\XX} f(x)g(x) \; d\mu(x) \le \left( \int_{\XX} e_1(x,y) d\mu(x) \right)^t \left( \int_{\XX} e_2(x,y) d\mu(x) \right)^{1-t}.
 \label{eq:Holder}
\end{equation}
However, one can observe that
\begin{align*}
\int_{\XX} f(x)g(x) \; d\mu(x) 
&= \int_{\XX} \exp \left( \frac{ t\SL \theta_\lambda^{(1)} \phi_\lambda(x) + (1-t) \SL \theta_\lambda^{(2)} \phi_\lambda(x)  + c(x,y)}{\ee} \right) d\mu(x),
\end{align*}
which ensures that
\begin{equation*}
\ee   \log\int_{\XX} f(x)g(x)\; d\mu(x) + \ee = h_\ee ( t\theta^{(1)} + (1-t)\theta^{(2)} , y).
\end{equation*}
Hence,  combining the above equality with  \eqref{eq:log} and \eqref{eq:Holder}, we obtain that
\begin{equation*}
h_\ee ( t\theta^{(1)} + (1-t)\theta^{(2)}, y ) \le t h_\ee ( \theta^{(1)}, y ) + (1-t)h_\ee ( \theta^{(2)}, y ),
\end{equation*}
which proves the convexity of $\theta \mapsto h_\ee(\theta,y)$.  Since  $H_\ee(\theta) = \EE \left[  h_{\ee}(\theta,Y) \right]$, we also obtain the convexity of the function $H_\ee$.
Furthermore, assume that H{\"o}lder's inequality \eqref{eq:Holder} becomes an equality, which means that the functions $f^p$ and $g^q$ are linearly dependent in $L^1(\mu)$. This would mean that 
it exists $\beta_y>0$ such that $e_1(x,y) = \beta_y e_2(x,y)$ for all $x\in\XX$. Applying the logarithm, this equality is equivalent to
\begin{equation*}
\frac{1}{\ee}\SL \theta_\lambda^{(1)} \phi_\lambda(x)  = \log \beta_y + \frac{1}{\ee}\SL \theta_\lambda^{(2)} \phi_\lambda(x).
\end{equation*}
By integrating the above equality with respect to $\mu$, and from our normalization condition \eqref{eq:normcond},
the equality case in the H{\"o}lder inequality  \eqref{eq:Holder}  implies that $\log \beta_y = 0$ and thus $\beta_y = 1$. But then one has that $e_1(x,y)=e_2(x,y)$, implying that $\SL (\theta_\lambda^{(1)} - \theta_\lambda^{(2)}) \phi_\lambda(x) = 0$   for all $x \in \XX$. Hence, we necessarily have that $\theta^{(1)} = \theta^{(2)}$ which yields a contradiction. Therefore, the function $\theta \mapsto h_{\ee}(\theta,y)$ is strictly convex. Since  $H_\ee(\theta) = \EE \left[  h_{\ee}(\theta,Y) \right] $ this also implies the strict convexity of  $H_\ee $, which achieves the proof of \cref{prop:convexity}.

\hfill
\demend

\section{Optimal transport with periodicity constraints} \label{sec:periodic}

In this section, we focus our attention on conditions such that the dual potential \eqref{eq:dualOT} is periodic at the boundary of $\XX$.

\subsection{The case of the standard quadratic cost}

Using classical results in  the analysis of multiple Fourier series (see e.g.\ \cite[Corollary 1.8]{SteinWeiss}), assuming that the Fourier coefficients $\theta^0 =  (\theta^0_\lambda)_{\lambda \in \Lambda}$ of $u_0$ form an absolutely convergent series implies that $u_0$ can be extended as a continuous and $\ZZ^d$-periodic function on $\R^d$.  Hence, under this assumption, $u_0$ has to be a continuous function that is  constant at the boundary of $\XX = [0,1]^d$. However, for the quadratic cost  $c(x,y) = \frac{1}{2} \|x-y\|^2$, we are not aware of standard results on the regularity of optimal transport (through smoothness assumptions on $\nu$) that would imply periodic properties of $u_0$ and its derivatives at the boundary of $\XX$.

\subsection{The  quadratic cost on the torus}

Nevertheless, guaranteeing the periodicity of $u_0$ and the summability of its Fourier coefficients is  feasible by considering the setting $\XX = \YY = \TT^d$, where $ \TT^d = \R^d/\ZZ^d$ is the $d$-dimensional torus, that is endowed with the usual distance
$$
d_{\TT^d}(x,y) = \min_{\lambda \in \ZZ^d} \|x-y + \lambda \|.
$$
Hereafter, we identify the torus  as the set of equivalence classes $\{x + \lambda \; : \; \lambda \in \ZZ^d \}$ for $x \in [0,1)^d$, and we use the notation $[x]=x+\lambda_{0}$ where $\lambda_{0} \in \ZZ^d$ is such that $\|x + \lambda\|$ is minimal for $\lambda \in  \ZZ^d$. We also recall that a function $u : \TT^d \to \R$ can be identified as a $\ZZ^d$-periodic function on $\R^d$.  Finally, one can observe that for a given $y \in \TT^{d}$, the cost function $c(x,y) = \frac{1}{2} d_{\TT^d}^{2}(x,y)$ is almost everywhere differentiable, and its gradient is (see e.g.\  \cite[Section 1.3.2]{santambrogio2015optimal})
$$
\nabla_x c(x,y) = [x-y],
$$
at every $x \notin  y +  \{\partial \Omega + \ZZ^d \}$ where $ \partial \Omega$ denotes the boundary of $\Omega =[-\frac{1}{2},\frac{1}{2}]^d$. 

Assuming that the probability measure $\nu$ is also supported on  the $d$-dimensional torus $\TT^d$ allows to use existing results for optimal transport on the torus (see e.g.\  \cite{CORDEROERAUSQUIN1999199},  \cite[Section 2.2]{Manole2021} and \cite[Section 1.3.2]{santambrogio2015optimal}).  Formally, taking $\XX = \YY = \TT^d$ implies that $\nu$ is considered as a periodic positive Radon measure on $\R^d$ with $\nu(\TT^d) = 1$, and that $\mu$ is understood as the Lebesgue measure on $\R^d$. Note that this setting is not restrictive, as it allows to treat the example of an absolutely continuous measure $\nu$ with support on $[0,1]^d$ whose density $f_{\nu}$ takes a constant value on the boundary of  $[0,1]^d$, implying that $f_{\nu}$ can be extended over $\R^d$ as a $\ZZ^d$-periodic function.

 Then, thanks to the identification of $u : \TT^d \to \R$ as a $\ZZ^d$-periodic function on $\R^d$, it follows that
$$
\inf_{x \in \TT^d} \left\{\frac{1}{2} d_{\TT^d}^{2}(x,y)  - u(x) \right\}  =  \inf_{x \in \R^d} \left\{\frac{1}{2} \|x-y\|^2  - u(x) \right\}.
$$
Therefore, it is equivalent to define the conjugate of a function $u : \TT^d \to \R$ with respect  to the cost $c(x,y) = \frac{1}{2} d_{\TT^d}^{2}(x,y)$ or to the quadratic cost  $c(x,y) = \frac{1}{2} \|x-y \|^2$ using the periodization of $u$ over $\R^d$. Now, using results on optimal transport on $\TT^d$, previously established in \cite{CORDEROERAUSQUIN1999199} or \cite[Proposition 4]{Manole2021}, it follows that 
\begin{enumerate}
\item[(i)]  there exists a unique optimal transport map $Q : \TT^d \to \TT^d$ from $\mu$ to $\nu$ such that
$$
Q =\argminE_{T \; : \; T \# \mu = \nu} \EE \left(  d_{\TT^d}^{2}(X,T(X)) \right),
$$
\item[(ii)] $Q(x) = x - \nabla u_{0}(x)$ where  $u_{0}$ is a $\ZZ^d$-periodic function on $\R^d$ that is a solution of the  dual  problem \eqref{eq:dualOT}  with $\XX = \YY = \TT^d$ and $c(x,y) = \frac{1}{2} d_{\TT^d}^{2}(x,y) $,
\item[(iii)]  $\| Q(x) - x\|^2 = d_{\TT^d}^{2}(x,Q(x))$  for almost every $x \in \R^d$. \\
\end{enumerate} 

Entropically regularized optimal transport on the torus has also been recently considered in  \cite{Berman2020} and \cite[Section E]{chizat2020sinkhorn}. One can thus also consider the dual formulation of entropic OT as in \eqref{eq:dualOTreg} with the cost $c(x,y) = \frac{1}{2} d_{\TT^d}^{2}(x,y) $. 


 We conclude this section on optimal transport on the torus by a discussion on the regularity of the optimal dual functions in the un-regularized case $\ee = 0$. For $s \in \NN$, we denote by $\mathcal{C}^s(\TT^d)$, the set of $\ZZ^d$-periodic functions $f$ on $\R^d$ having everywhere defined continuous  partial derivatives.
 Then, the following regularity result holds as an immediate application of results from  \cite{CORDEROERAUSQUIN1999199} and \cite[Theorem 5]{Manole2021}.
 
%

\begin{lem} \label{lem:ctransform}
Let $u_{0}$ be a a solution of the  dual  problem \eqref{eq:dualOT}  with $\XX = \YY = \TT^d$ and  $c(x,y) = \frac{1}{2} d_{\TT^d}^{2}(x,y) $. Suppose that the probability distribution $\nu$ is absolutely continuous with a density $f_{\nu}$ that is lower and upper bounded by positive constants. Assume further that   $f_{\nu} \in \mathcal{C}^{s-1}(\TT^d)$ for some $s > 1$. Then,  $u_{0}$ belongs to $\mathcal{C}^{s+1}(\TT^d)$.
\end{lem}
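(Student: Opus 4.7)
The plan is to reduce the lemma to a direct application of Caffarelli's regularity theory for the Monge--Ampère equation, as adapted to the flat torus in the cited references. First, I would use properties (i)--(iii) stated above to identify $u_0$ with a $\ZZ^d$-periodic function on $\R^d$ whose gradient satisfies $Q(x) = x - \nabla u_0(x)$, where $Q$ is the unique optimal transport map from $\mu$ to $\nu$ with respect to $c(x,y) = \tfrac12 d_{\TT^d}^2(x,y)$. Setting $\psi_0(x) = \tfrac12 \|x\|^2 - u_0(x)$, one obtains the Brenier-type representation $Q = \nabla \psi_0$, with $\psi_0$ convex in the sense appropriate to the torus (locally on each fundamental domain), as established in \cite{CORDEROERAUSQUIN1999199}.

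Next, since $\mu$ is the Lebesgue measure on $\TT^d$ and $\nu$ has density $f_\nu$ bounded above and below by positive constants, the change-of-variables formula associated with the push-forward $\nabla \psi_0 \# \mu = \nu$ shows that $\psi_0$ solves the Monge--Ampère equation
\begin{equation*}
\det D^2 \psi_0(x) = \frac{1}{f_\nu(\nabla \psi_0(x))}
\end{equation*}
in the Alexandrov sense, with a bounded right-hand side that stays away from zero.

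At this point I would invoke Caffarelli's interior regularity theory, in the toroidal form proved by Cordero-Erausquin \cite{CORDEROERAUSQUIN1999199} and restated as \cite[Theorem 5]{Manole2021}. The $L^\infty$ bounds on $f_\nu$ first give $\psi_0 \in \mathcal{C}^{2,\alpha}(\TT^d)$ for some $\alpha \in (0,1)$. Then, using the fact that $f_\nu \in \mathcal{C}^{s-1}(\TT^d)$ and bootstrapping via Schauder-type estimates for the linearized Monge--Ampère operator, one gains two orders of differentiability over $f_\nu$, yielding $\psi_0 \in \mathcal{C}^{s+1}(\TT^d)$. Since $u_0(x) = \tfrac12\|x\|^2 - \psi_0(x)$, we immediately conclude $u_0 \in \mathcal{C}^{s+1}(\TT^d)$.

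The only genuine obstacle is the toroidal version of Caffarelli's regularity theory itself: Caffarelli's theorems are stated for convex domains in Euclidean space and must be adapted to handle the periodicity and the fact that $\psi_0$ is only ``convex modulo $\ZZ^d$''. However, this adaptation is precisely what is carried out in \cite{CORDEROERAUSQUIN1999199,Manole2021}, so the plan amounts to invoking these references once the Monge--Ampère formulation is in place.
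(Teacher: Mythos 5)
Your proposal is correct and matches the paper's approach: the paper proves this lemma simply by declaring it an immediate consequence of the Caffarelli-type regularity results on the torus in \cite{CORDEROERAUSQUIN1999199} and \cite[Theorem 5]{Manole2021}. Your proposal merely spells out the Monge--Amp\`ere reduction and bootstrapping that those references already encapsulate, so it is a fleshed-out version of the same citation-based argument.
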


Consequently, under the assumptions of \cref{lem:ctransform}, one has that if $f_{\nu} \in \mathcal{C}^{s-1}(\TT^d)$ for some $s > d/2 -1$, then $u_{0}$ belongs to $\mathcal{C}^{k}(\TT^d)$ with $k > d/2$. Therefore, by standard results for multiple Fourier series  (see e.g.\ \cite[Corollary 1.9]{SteinWeiss}), one has that $\sum_{\lambda \in \Lambda} |\theta_\lambda^{0} | < + \infty$. Hence we can conclude that if the density of $\nu$ is sufficiently smooth (and is upper and lower bounded by positive constants), then the Fourier serie of $u_0$ actually belongs to $\ell_1(\Lambda)$.


\bibliographystyle{siamplain}
\bibliography{RegularizedOT_Quantiles.bib}

\end{document}